\documentclass[a4paper]{amsart}

\usepackage{amsmath}
\usepackage{amssymb}
\usepackage[foot]{amsaddr}
\usepackage{overpic}
\usepackage{microtype}
\usepackage{color}
\usepackage{tikz-cd}
\usepackage{multicol}
\usepackage{contour}
\contourlength{1pt}
\usepackage{enumerate}
\usepackage{enumitem}
\setlist[enumerate, 1]{label=(\roman*)}
\usepackage{sidecap}
\usepackage{aligned-overset}
\usepackage{physics}

\usepackage{hyperref}

\newcommand{\R}{\mathbb{R}}
\newcommand{\Z}{\mathbb{Z}}
\newcommand{\ff}{\mathfrak{f}}

\newcommand{\cc}{\mathfrak{c}}
\newcommand{\bb}{\mathfrak{b}}
\newcommand{\aaa}{\mathfrak{a}}
\newcommand{\la}{\langle}
\newcommand{\ra}{\rangle}
\newcommand{\sv}[1]{\begin{pmatrix}#1\end{pmatrix}}
\newcommand{\svs}[1]{\left(\!\begin{smallmatrix}#1\end{smallmatrix}\!\right)}

\newcommand{\dash}{\discretionary{-}{}{-}\penalty1000\hskip0pt}
\newcommand{\mlput}[1]{\hbox to 0pt{\hss{#1}}}
\newcommand{\mrput}[1]{\hbox to 0pt{{#1}\hss}}
\newcommand{\mcput}[1]{\hbox to 0pt{\hss{#1}\hss}}
\newcommand{\M}{\mathcal{M}}

\def\be{\begin{equation}}
\def\ee{\end{equation}}

\definecolor{airforceblue}{rgb}{0.36, 0.54, 0.66}
\definecolor{myred}{rgb}{0.9, 0.3, 0.3}
\definecolor{myblue}{rgb}{0.3, 0.3, 0.6}

\DeclareMathOperator{\area}{area}

\DeclareMathOperator{\myspan}{span}
\DeclareMathOperator{\sgn}{sgn}
\DeclareMathOperator{\SO}{SO}

\newtheorem{thm}{Theorem}
\newtheorem{prop}[thm]{Proposition}
\newtheorem{cor}[thm]{Corollary}
\newtheorem{lem}[thm]{Lemma}
\newtheorem{ex}[thm]{Example}
\newtheorem{rem}[thm]{Remark}
\theoremstyle{definition}
\newtheorem{defn}[thm]{Definition}

\captionindent0pt
\makeatletter\def\@captionfont{\normalfont\footnotesize}\makeatother

\makeatletter
\def\paragraph{\medskip\@startsection{paragraph}{4}%
  \z@\z@{-\fontdimen2\font}%
  {\normalfont\bfseries}}
\makeatother

\title{Isometric surfaces in isotropic 3-space}

\author{Christian M{\"u}ller$^1$ \and Helmut Pottmann$^1$}
\address{\begin{minipage}{\linewidth}
  $^1$TU Wien, Institute for Discrete Mathematics and Geometry, 
  A-1040 Vienna\\
  \phantom{$^1$}\emph{Email:} cmueller@geometrie.tuwien.ac.at
\end{minipage}
}

\address{\begin{minipage}{\linewidth}
  \phantom{$^1$}\emph{Email:} pottmann@geometrie.tuwien.ac.at
\end{minipage}
}

\begin{document}

\begin{abstract}
  While the notion of isometric deformations of surfaces is
  straightforward for surfaces with Euclidean metric, a corresponding
  notion in isotropic space has been missing. By making Gauss'
  Theorema Egregium a necessary condition we develop a sensible notion of
  isometric surfaces in isotropic space. The well-known examples in
  Euclidean space, like isometries within the associated family of minimal
  surfaces, Bour's theorem, and Minding isometries, find their natural
  analogues in isotropic space. We also include an extensive treatment of
  infinitesimal flexibility, or infinitesimal deformation, of surfaces. We
  prove results for the isotropic displacement diagrams in analogy to its
  well-known counterparts in Euclidean space culminating in the existence
  of an isotropic Darboux wreath consisting of six surfaces. We show
  several interesting relations for special parametrizations involving
  K{\oe}nigs and Voss nets of smooth and discrete surfaces within the
  Darboux wreath and we encounter surfaces of constant Gaussian and mean
  curvature. At several occasions, we point to connections to statics as
  the isotropic space is a natural language to describe the Airy stress
  function.
\end{abstract}

\maketitle

\section{Introduction}

The geometry of isotropic 3-space $I^3$ is a Cayley-Klein geometry based
on a group of affine transformations which preserve the isotropic
semi-norm $\|(x, y, z)\|_i := \sqrt{x^2 + y^2}$. K.~Strubecker
\cite{strubecker1,strubecker2,strubecker3,strubecker4}
has studied this geometry systematically. While at first sight the metric
in $I^3$ is too degenerate to result in an elegant and non-trivial theory,
he showed that this is not true, since one can define so-called replacing
invariants which make isotropic geometry interesting. An example is
provided by the Gaussian curvature of surfaces. While the Riemannian
viewpoint would yield vanishing Gaussian curvature, one can define a
Gaussian mapping to a non-degenerate parabolic sphere and in this way
obtain an interesting curvature theory and a Gaussian curvature which
shares close similarities with its counterpart in Euclidean 3-space $E^3$.
Strubecker's pioneering work stimulated a lot of further research on the
geometry in $I^3$, most of which is found in the monograph by
Sachs~\cite{Sachs:1990}. A comparison of the large body of results in
isotropic geometry with their Euclidean counterparts leads to a view of
isotropic geometry as a structure preserving simplification
of Euclidean geometry. This makes isotropic geometry an attractive tool
for the solution of difficult Euclidean problems. The main idea is to
first study the isotropic counterpart and then transform it to the
Euclidean version. 

The present paper has been motivated by such a hard problem, namely the
design of flexible quadrilateral meshes. Such meshes are mechanisms when
considering their faces as rigid bodies and the edges joining two faces
as rotational joints. Already the classification of such meshes with 
$3 \times 3$ faces is a very difficult problem~\cite{izmestiev-2017}, and
apart from special cases addressed below it is not known how to combine
these small meshes to larger ones. Hence, we came up with the idea to
first study the isotropic counterpart. However, surprisingly there is no
prior work on isometric surfaces in $I^3$, a gap that will be filled in
the present paper.

Looking just at the metric, in isotropic 3-space $I^3$ any two surfaces
are locally isometric to each other (see
Figure~\ref{fig:wrong-isometry}~(a)). To obtain non-trivial isometries, we
additionally require agreement of the isotropic Gaussian curvature at
corresponding points of two isometric surfaces. Based on this definition,
we find isotropic counterparts to well-known results on Euclidean
isometries and remarkable relations to the work of Sauer~\cite{sauer:1970}
on infinitesimal Euclidean isometries and concepts of graphic statics and
mechanics, namely Airy stress functions associated to 2D systems in
equilibrium \cite{strubecker:1962,Millar2022,vouga-2012-sss}. Moreover,
we provide the necessary theoretical framework for the study of isotropic
flexible meshes \cite{flexible-isotropic-2025}. Fortunately, these meshes turn out to be rather easily
constructed and serve well as initial guesses for numerical optimization
towards Euclidean flexible meshes (see Section~\ref{sec:future}).

\subsection{Related work}

Isometries of surfaces in Euclidean space $E^3$ have a long history of
geometric research. We point to the survey by Sabitov~\cite{sabitov92}.
Infinitesimal flexibility of a mechanism has mostly been studied in the
general context of bar
and joint frameworks. We refer to Connelly and Guest~\cite{connelly+2022}
for a textbook introduction.
Sauer~\cite{sauer:1970} has an extensive treatment of isometries, in particular
infinitesimal ones, in his book on difference geometry, which is an early
precursor of discrete differential geometry~\cite{bobenko-2008-ddg}. There,
a focus is on flexible quad meshes~\cite{Schief2008}, which recently received 
increasing interest in connection with rigid origami structures. For
major progress in this area we refer to Izmestiev~\cite{izmestiev-2017} and
He and Guest~\cite{he-guest}. 

Isotropic geometry recently received interest within the structural design
community. Already Strubecker~\cite{strubecker:1962} pointed to the fact that
the graph of the Airy stress function of a planar continuum in equilibrium is best
studied within isotropic geometry. One obtains a complete analogoy between mechanical
invariants of the planar stress state and isotropic curvatures of the stress surface.
This concept has been extended for usage in the design of structures which are 
in equilibrium under vertical external loads~\cite{Millar2022,TELLIER-linearWeingarten,vouga-2012-sss}.

\subsection{Contributions and overview}

We start in Section~\ref{sec:basics} by introducing the basic concepts of
isotropic space, in particular curvatures, metric duality, transformation
of contact elements under dualities, support functions and various
relations among them.
The key notion of our paper is the isometry between two surfaces in
isotropic space that we introduce in Section~\ref{sec:isodef}.
We emphasize the sensibility of our notion of isometry on the basis of
three well-known examples, that is, three classes of families of
isometric deformations of surfaces (associated family of minimal surfaces,
Bour's theorem, and Minding isometries) in
Section~\ref{ssec:simpleexamples}.
Infinitesimal isometries are studied in Section~\ref{sec:inf} where one
focus lies on a sensible development of displacement diagrams and the
investigation of infinitesimal isometries of ruled surfaces. 
The displacement diagrams can nicely be inserted into a Darboux wreath of
length six (Sec.~\ref{sec:darboux}). The Darboux wreath contains
interesting surfaces and parametrizations if the special properties like
being a Q-net is imposed on one of its members (Sec.~\ref{sec:special}).
We also consider discrete nets and their infinitesimal isometries for
special cases in Section~\ref{sec:discrete} where we encounter discrete
K{\oe}nigs nets, Voss nets, A-nets, linear Weingarten surfaces, minimal
surfaces, cmc surfaces and constant Gaussian curvature surfaces in the
Darboux wreath.

\section{Basic Concepts in Isotropic Geometry}
\label{sec:basics}

We start with explaining basic notions and properties of isotropic space
$I^3$. It can be seen as a simplified version of \emph{Euclidean Space}
$E^3$. While $E^3$ is endowed with an actual metric, the Euclidean metric,
the isotropic space comes with a pseudometric which ``ignores'' the third
dimension compared to the Euclidean metric.

\subsection{The isotropic space $I^3$}

The standard model of three dimensional isotropic geometry $I^3$ is $\R^3$
endowed with the pseudometric (see, e.g.,~\cite{Sachs:1990})
\begin{equation}
  \label{eq:distance}
  d((x_1, y_1, z_1), (x_2, y_2, z_2)) = \sqrt{(x_1 - x_2)^2 + (y_1 -
  y_2)^2},
\end{equation}
the \emph{isotropic distance}.
In geometric terms, the isotropic distance between two points is measured
as Euclidean distance in the top view.
\begin{defn}
  The orthogonal projection into the $xy$-plane is called 
  \emph{top view}. In coordinates it is the map 
  \begin{equation*}
    p = (x, y, z) \mapsto \tilde p := (x, y, 0) \cong (x, y).
  \end{equation*}
\end{defn}

Two planes $\pi_1, \pi_2$ that are not parallel to the $z$-axis can be
represented by equations 
$z = u_i x + v_i y + w_i$ with $i = 1, 2$. They intersect in an
\emph{isotropic angle} defined as 
\begin{equation*}
  \psi(\pi_1, \pi_2) 
  = \sqrt{(u_2 - u_1)^2 + (v_2 - v_1)^2}.
\end{equation*}
\emph{Isotropic congruence transformations} preserve isotropic distances
and angles and are represented by those volume-preserving affine
transformations which appear in the top view as Euclidean congruence
transformations.

It should be noted that isotropic geometry can be interpreted as
Cayley-Klein geometry in complex extended 3-dimensional projective space.
The \emph{absolute}, i.e., the object that is left invariant under
isotropic transformations, is the plane at infinity (in homogeneous
coordinates $x_0 = 0$) and the pair of conjugate complex lines (in
homogeneous coordinates $(x_1 + i x_2) (x_1 - i x_2) = 0$) in the plane at
infinity.

In Cayley-Klein geometry, spheres are not always defined by a metric but
also as real irreducible quadric containing the absolute. Isotropic unit
spheres therefore come in two types. Expressed in standard form they are
of 
\begin{equation*}
  \text{\emph{parabolic type} }
  z = \frac{1}{2} (x^2 + y^2),
  \quad
  \text{and \emph{cylindrical type} }
  x^2 + y^2 = 1.
\end{equation*}
The latter one is also the set of points with constant isotropic distance
to the origin.

\emph{Isotropic lines} are straight lines which pass trough the absolute
of $I^3$. The only real point of the absolute is $(0 : 0 : 0 : 1)$ in
homogeneous coordinates. Isotropic lines are therefore parallel to the
$z$-axis. The distance between any two points of an isotropic line is
zero. \emph{Isotropic planes} are planes which are also parallel to the
$z$-axis.

\emph{Admissible} surfaces in $I^3$ are regular surfaces without isotropic
tangent planes.
A smooth Monge patch (see, e.g.,~\cite{oprea-2007}), i.e., a surface
parametrization of the form $(u, v) \in U \mapsto F(u, v) = (u, v, f(u, v))$ 
over an open domain $U \subset \R^2$ always describes an admissible
surface as there are no vertical tangent planes. We will often simply
write $(u, v, f)$ for $(u, v, f(u, v))$.

In Euclidean geometry the Gaussian curvature can be defined by the metric
on the surface. Since the metric in $I^3$ induces on every admissible
surface the metric of the plane $\R^2$, the Gaussian curvature vanishes
for every surface.
Consequently, in~\cite{strubecker2} the definition of the Gaussian
curvature for surfaces in $I^3$ is defined as so-called \emph{relative
curvature} with the parabolic unit sphere as relative Gauss image (see
Figure~\ref{fig:wrong-isometry}). The Gauss image is defined by parallel
tangent planes.

\begin{defn}
  \label{defn:gaussimage}
  Let us consider the isotropic sphere of parabolic type $S$ with equation
  $2 z = x^2 + y^2$ as unit sphere. Then the \emph{isotropic Gauss image
  $\sigma(g)$} of an admissible surface $g(u, v) = (u, v, f(u, v))$
  defined by parallel tangent planes is given by 
  \begin{equation*}
    \sigma(g(u, v)) = (f_u, f_v, (f_u^2 + f_v^2)/2).
  \end{equation*}
\end{defn}

For a general admissible surface parametrization 
$g : U\subset \R^2 \to I^3$ the \emph{isotropic Gaussian curvature} (or
\emph{relative curvature}) is given by (see, e.g.,~\cite{Sachs:1990})
\begin{equation}
  \label{eq:general-K}
  K = \frac{
    \det(g_u, g_v, g_{uu})
    \det(g_u, g_v, g_{vv})
    -
    \det(g_u, g_v, g_{uv})^2
  }
  {\tilde g_u^2 \tilde g_v^2 - \la \tilde g_u, \tilde g_v\ra^2}, 
\end{equation}
and the isotropic mean curvature is given by
\begin{equation}
  \label{eq:general-H}
  H = \frac{
    g_u^2 \det(g_u, g_v, g_{vv})
    - 2 \la \tilde g_u, \tilde g_v\ra \det(g_u, g_v, g_{uv}) 
    + g_v^2 \det(g_u, g_v, g_{uu})
  }
  {2 (\tilde g_u^2 \tilde g_v^2 - \la \tilde g_u, \tilde g_v\ra^2)},
\end{equation}
where $\tilde g$ denotes the top view of $g$.
If the surface is given as graph of a function 
$F(u, v) = (u, v, f(u, v))$, the Gaussian and mean curvatures simply read
\begin{equation}
  \label{eq:graphhk}
  K = f_{uu} f_{vv} - f_{uv}^2
  \quad\text{and}\quad
  H = \frac{1}{2} (f_{uu} + f_{vv}).
\end{equation}

Note that isotropic minimal surfaces, i.e., surfaces with vanishing
isotropic mean curvature, correspond to harmonic functions $f$ (which
solve the Laplace equation $\Delta f = 0$).

The \emph{principal curvatures} $\kappa_1, \kappa_2$ are solutions to the
equation
\begin{equation*}
  \kappa^2 - 2 H \kappa + K = 0,
\end{equation*}
and therefore the eigenvalues of the Hessian 
\begin{equation*}
  \nabla^2 f = 
    \begin{pmatrix}
      f_{uu} & f_{uv} \\
      f_{uv} & f_{vv}
    \end{pmatrix}.
\end{equation*}
Its eigenvectors are the \emph{principal directions}.
Since the Gauss, mean and principal curvatures only depend on the second
derivative they are equal to the corresponding curvatures of the
osculating paraboloid. We obtain the osculating paraboloid from the Taylor
expansion up to its second order. It is therefore instructive to compute
the curvatures for a paraboloid.

\begin{figure}[t]
  \begin{overpic}[width=.32\textwidth]{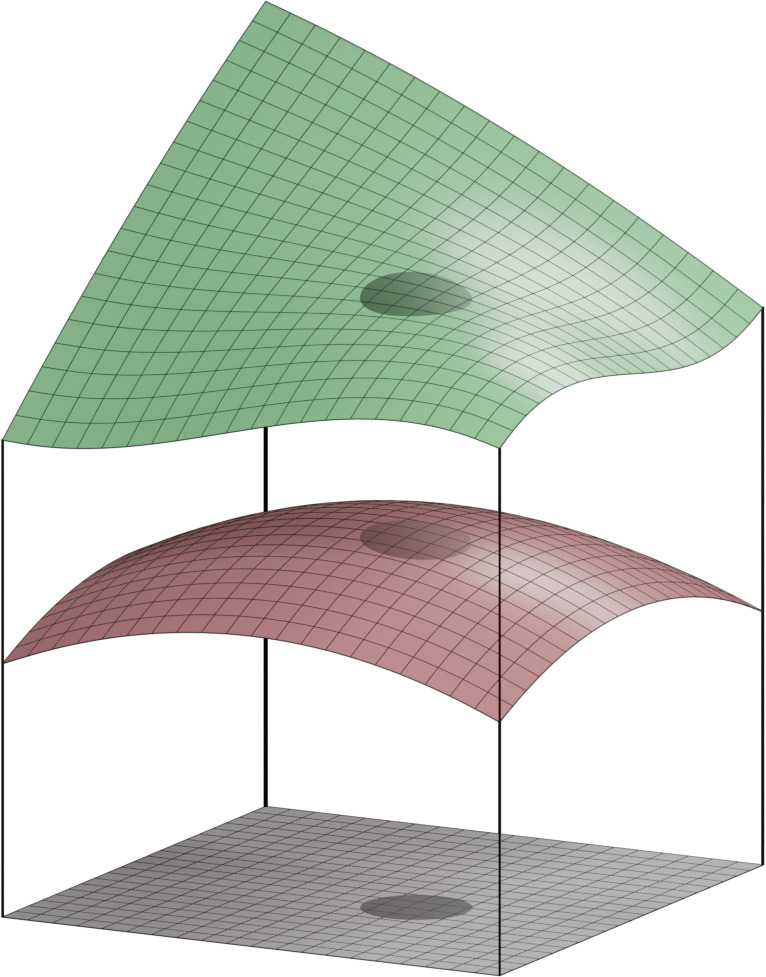}
    \put(0,0){\small(a)}
  \end{overpic}
  \hfill
  \begin{overpic}[width=.67\textwidth]{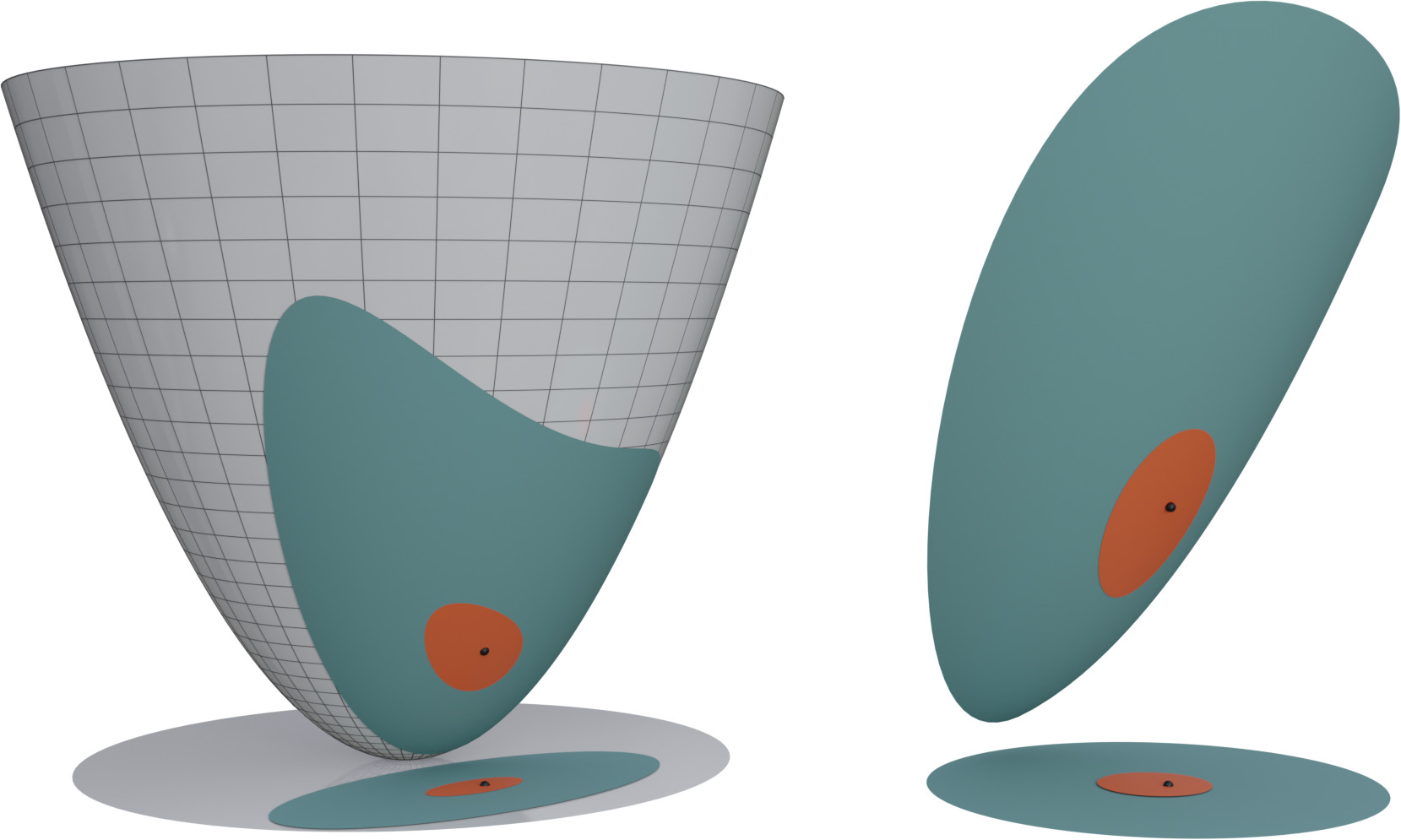}
    \put(85,23){\contour{white}{\small$p$}}
    \put(92,43){\contour{white}{\small$g$}}
    \put(85,3){\contour{white}{\small$\tilde p$}}
    \put(92,3){\contour{white}{\small$\tilde g$}}
    \put(36,13){\contour{white}{\small$\sigma(p)$}}
    \put(36,23){\contour{white}{\small$\sigma(g)$}}
    \put(40,43){\contour{white}{\small$S$}}
    \put(2,0){\small(b)}
    \put(65,0){\small(c)}
  \end{overpic}
  \caption{\emph{(a)} 
  \emph{Metric isometry} of surfaces is not a sensible notion in isotropic
  geometry. As distances are measured in the top view any two surfaces are
  isometric to each other in \emph{just the metrical sense} if their top
  view is congruent. 
  \emph{(b-c)} The isotropic unit sphere $S$ (of parabolic type)
  is a Euclidean paraboloid \emph{(b)}. The Gauss image $\sigma(g)$
  of a surface $g$ \emph{(c)} to $S$ is obtained by the correspondence
  of parallel tangent planes. The ratio of the areas of the top views of
  the red domains around a point $p$ converges to the isotropic Gaussian
  curvature at $p$ as the diameter of the domain goes to zero.}
  \label{fig:wrong-isometry}
\end{figure}

\begin{ex}
  \label{ex:paraboloid}
  Let us consider a paraboloid with equation $z = (a x^2 + b y^2)/2$. Its
  Gaussian and mean curvature are everywhere $K = a b$ and $H = (a + b)/2$.
  The principal curvatures are everywhere $\kappa_1 = a$ and $\kappa_2 =
  b$.
\end{ex}

\subsection{Metric duality and parallel points} 
\label{ssec:metricdual}

Projective correlations in projective $3$-space are linear maps which map
points to planes and vice versa while preserving incidence. Straight lines
are mapped to straight lines. In isotropic space $I^3$ there are
correlations which preserve metric quantities. Such a correlation in $I^3$
is called \emph{metric duality} and can be realized in the following two
ways as discussed below. 

The first realization of metric duality is by the \emph{polarity $\delta$
with respect to the isotropic unit sphere} $S \colon 2 z =
x^2 + y^2$. It maps a point with coordinates $(u, v, w)$ to its polar
plane and vice versa:
\begin{equation*}
 \delta \colon (u, v, w) \longleftrightarrow u x + v y - z - w = 0.
\end{equation*}
Note that a point $p = (u, v, w)$ lies on the plane $\delta(p)$ if and
only if $p$ lies on the isotropic unit sphere $S$.

The second way of realizing metric duality in $I^3$ is by the \emph{null
polarity} $\nu$ which maps a point $(u, v, w)$ to a plane via
\begin{equation}
  \label{eq:nu}
 \nu \colon (u, v, w) \longleftrightarrow  v x - u y + z - w =0, 
\end{equation}
and vice versa.
Note that each point $p$ always lies on its dual plane $\nu(p)$ which
characterizes \emph{null polarities} among correlations.

After projective extension of $I^3$, note that both dualities, $\delta$
and $\nu$, map vertical planes to points at infinity and vice versa.
Furthermore, both metric dualities preserve metric quantities under
dualization as detailed in Lemma~\ref{lem:metricduality}. A proof can be
found in~\cite{Sachs:1990}.

\begin{lem}
  \label{lem:metricduality}
  The metric dualities $\delta, \nu$ map two points $p_1, p_2$ with
  isotropic distance $l$ to two planes with intersection angle $l$ and
  vice versa: 
  \begin{equation*}
    d(p_1, p_2) 
    = \psi(\delta(p_1), \delta(p_2)) 
    = \psi(\nu(p_1), \nu(p_2)).
  \end{equation*}
\end{lem}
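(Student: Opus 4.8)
The plan is to verify both equalities by a direct computation, after the observation that the isotropic distance and the isotropic angle are each simply a Euclidean norm in a suitable $2$-dimensional plane: the distance reads off the $(u,v)$-coordinates of the two points, while the angle reads off the slope coefficients of the two planes once they are written in the normal form $z = ux + vy + w$. It therefore suffices to track how each duality transports the relevant pair of coordinates, and to note that in both cases this transport is an orthogonal map of $\R^2$.

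First I would treat the polarity $\delta$. For two points $p_1 = (u_1, v_1, w_1)$ and $p_2 = (u_2, v_2, w_2)$ the isotropic distance is $d(p_1, p_2) = \sqrt{(u_1 - u_2)^2 + (v_1 - v_2)^2}$. Their polar planes $\delta(p_i)$ are $z = u_i x + v_i y - w_i$, so the slope coefficients are exactly $(u_i, v_i)$ and hence $\psi(\delta(p_1), \delta(p_2)) = \sqrt{(u_1 - u_2)^2 + (v_1 - v_2)^2}$, which already coincides with $d(p_1, p_2)$. In short, $\delta$ acts on the slope part $(u,v)$ as the identity and trivially preserves the norm of the difference. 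For the null polarity $\nu$ the plane $\nu(p_i)$ is $z = -v_i x + u_i y + w_i$, so its slope pair is $(-v_i, u_i)$, the image of $(u_i, v_i)$ under a quarter turn. Consequently $\psi(\nu(p_1), \nu(p_2)) = \sqrt{(v_1 - v_2)^2 + (u_1 - u_2)^2} = d(p_1, p_2)$, since a rotation of $\R^2$ is an isometry.

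Finally there is the \emph{vice versa} direction. Both assignments in the definitions are symmetric double arrows, i.e.\ $\delta$ and $\nu$ are involutive correlations, so reading them as plane-to-point maps and repeating the two computations above with the planes' slope coordinates now serving as input yields the reverse implication verbatim. The only place asking for a word of care---the nearest thing to an obstacle in an otherwise immediate argument---is confirming this involutivity: for $\delta$ it follows from the symmetry of the polarity relation in $(u, v, w)$, and for $\nu$ one checks directly that the plane $z = ax + by + c$ corresponds back to the point $(b, -a, c)$, so that the assignment is indeed consistent in both directions. Granting this bookkeeping, the lemma is an immediate consequence of the two norm computations, both of which amount to the statement that $\delta$ and $\nu$ induce orthogonal transformations (the identity and a quarter turn) on the plane of slope coordinates.
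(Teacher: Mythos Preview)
Your proof is correct: the direct computation tracking how $\delta$ and $\nu$ carry the pair $(u,v)$ to the slope coefficients (via the identity and a quarter turn, respectively) is exactly what is needed, and your involutivity check handles the converse direction cleanly. The paper itself does not give a proof of this lemma at all---it simply cites Sachs~\cite{Sachs:1990}---so there is no in-paper argument to compare against; your explicit verification is the natural one.
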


Two planes $\delta(p_1), \delta(p_2)$ are parallel if and only if the
corresponding points $p_1, p_2$ lie on the same isotropic line (i.e.,
vertical line). This is the reason why such two points are called parallel
points which turns out to be a useful notion.

\begin{defn}
  \label{def:parallelpoints}
  Two points $p_1, p_2 \in I^3$ are called \emph{parallel} if they lie on
  the same isotropic line.
\end{defn}

Parallel points have the same top view which implies that the isotropic
distance~\eqref{eq:distance} of two parallel points is zero.
In some cases a so called ``replacing invariant'', a pseudo-distance, is
introduced to measure the vertical distance between two parallel points.

\subsection{Contact elements}
\label{subsec:contactelements}

A key notion in our paper is the one of a contact element.
\begin{defn}
  A \emph{contact element} is a pair consisting of a point and an incident
  plane (see Figure~\ref{fig:contact-element}).
\end{defn}
A contact element with a non-isotropic plane can be represented by the
\emph{contact point} $(x, y, z)$ and the Euclidean normal vector $(p, q,
-1)$. We denote such a non-isotropic contact element by the quintuple 
  \begin{equation*}
  (x, y, z, p, q).
  \end{equation*}
Since correlations preserve incidence, they map contact elements to
contact elements. 
\begin{lem} 
  \label{lemma1}
  The metric dualities $\delta$ and $\nu$ map between contact elements in the
  following way:
  \begin{align*}
    \delta \colon \ (u, v, w, p, q) 
    &\longleftrightarrow 
    (p, q, p u + q v - w, u, v),
    \\
    \nu \colon \ (u, v, w, p, q)  
    &\longleftrightarrow
    (q, -p, w - p u - q v, -v, u).
  \end{align*}
\end{lem}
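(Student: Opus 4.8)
The plan is to unravel the quintuple notation and then exploit two facts: a correlation sends a point to a plane and a plane to a point while preserving incidence, and both $\delta$ and $\nu$ are involutions, so ``applying the correlation to a plane'' amounts to recognizing which point has that plane as its image. First I would record the plane attached to a contact element explicitly. The quintuple $(u,v,w,p,q)$ denotes the point $P=(u,v,w)$ together with the plane $\Pi$ through $P$ whose Euclidean normal is $(p,q,-1)$, that is, $\Pi\colon px+qy-z+(w-pu-qv)=0$. For a correlation $\varphi$, the image of the contact element is the pair $(\varphi(\Pi),\varphi(P))$, where $\varphi(\Pi)$ is a point and $\varphi(P)$ a plane; since $P\in\Pi$ and incidence is preserved, $\varphi(\Pi)$ lies in $\varphi(P)$, so this pair is again a contact element.

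For $\delta$ I would compute $\delta(P)$ directly from the definition: the point $(u,v,w)$ goes to the plane $ux+vy-z-w=0$, and rewriting this as $z=ux+vy-w$ exhibits its normal $(u,v,-1)$, which fixes the last two slots $(u,v)$ of the image quintuple. To obtain $\delta(\Pi)$ I would use that $\delta$ is an involution, so I must find the point $(u',v',w')$ whose polar $u'x+v'y-z-w'=0$ coincides with $\Pi$. Matching the coefficients of $x,y,z$ and the constant term yields $u'=p$, $v'=q$, $w'=pu+qv-w$, i.e. the contact point $(p,q,pu+qv-w)$. Assembling these pieces gives $\delta\colon(u,v,w,p,q)\leftrightarrow(p,q,pu+qv-w,u,v)$.

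The computation for $\nu$ is identical in structure. From \eqref{eq:nu} the plane $\nu(P)$ is $vx-uy+z-w=0$, whose normal in the normalized form is $(-v,u,-1)$, fixing the last two entries $(-v,u)$; and solving for the point whose $\nu$-image equals $\Pi$ produces $(q,-p,w-pu-qv)$. This reproduces the stated formula $\nu\colon(u,v,w,p,q)\leftrightarrow(q,-p,w-pu-qv,-v,u)$.

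The only genuinely delicate part is the sign bookkeeping when passing between a plane's linear equation and its normal in the normalized form $(\,\cdot\,,\,\cdot\,,-1)$, compounded by the asymmetric sign pattern of $\nu$ in \eqref{eq:nu} (the $+z$ term there forces a global sign flip before one matches coefficients with $\Pi$). I would guard against a dropped sign by verifying incidence at the very end: substituting the new contact point into the new plane equation must give zero in both cases, e.g. $up+vq-(pu+qv-w)-w=0$ for $\delta$ and $vq-u(-p)+(w-pu-qv)-w=0$ for $\nu$. These checks confirm that each image is a legitimate contact element and that the sign bookkeeping has been carried out correctly.
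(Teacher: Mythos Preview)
Your proof is correct and follows essentially the same approach as the paper: both compute $\delta(P)$ from the definition to obtain the new plane and its normal, then identify $\delta(\Pi)$ as the point whose polar equals $\Pi$, and treat $\nu$ analogously. Your additional incidence check at the end is a nice safeguard but not present in the paper's version.
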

\begin{proof}
  Let us consider the contact element $(u, v, w, p, q)$. This contact
  element consists of the point $(u, v, w)$ and the plane 
  $p x + q y - z - (p u + q v - w) = 0$.
  The metric duality $\delta$ maps the point $(u, v, w)$ to the plane $u x
  + v y - z - w = 0$ whose Euclidean normal vector is $(u, v, -1)$.
  Furthermore, $\delta$ maps the above plane to the point 
  $(p, q, p u + q v - w)$.
  Consequently, the image of the above contact element under $\delta$ is 
  $(p, q, p u + q v - w, u, v)$. The proof for $\nu$ works analogously.
\end{proof}

\begin{SCfigure}[2.9][t]
  \begin{overpic}[width=.25\textwidth]{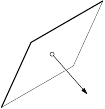}
    \put(40,54){\small$(x, y, z)$}
    \put(60,7){\small$(p, q, -1)$}
  \end{overpic}
  \caption{A contact element is a plane with an incident point 
  $(x, y, z)$. If the Euclidean normal vector of a non-isotropic plane is
  $(p, q, -1)$ then we represent the contact element by the quintuple 
  $(x, y, z, p, q)$.}
  \label{fig:contact-element}
\end{SCfigure}

Let us consider an isotropic contact element $E$. 
A Euclidean rotation about the $z$-axis by the angle of $\pi/2$ and
subsequent Euclidean reflection at the plane $z = 0$ maps the contact
element $\nu(E)$ to the contact element $\delta(E)$.
The rotation is an isotropic (orientation preserving) motion,
whereas the reflection is an orientation reversing congruence
transformation. It reverses the sign of certain signed invariants that we
will not describe in more details. 
\begin{lem}
  \label{lem:topviewoflines}
  A straight line $L$ and its image $\nu(L)$ have parallel top
  views, while $L$ and $\delta(L)$ have orthogonal top views.
\end{lem}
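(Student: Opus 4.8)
The plan is to represent the line $L$ by two of its points, push these points forward under the correlation to obtain two planes, and read off the direction of their line of intersection; comparing its first two coordinates with the top view direction of $L$ then settles both claims. The conceptual input I rely on is the standard fact that a correlation sends the join $P\vee Q$ of two points to the meet $\nu(P)\cap\nu(Q)$ of the two image planes, so that the image of a line is again a line, and this image is independent of the chosen points.

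Concretely, I would fix $P=(p_1,p_2,p_3)$ and $Q=(q_1,q_2,q_3)$ on a non-isotropic line $L$, with direction $d=Q-P$ and top view direction $(d_1,d_2)=(q_1-p_1,q_2-p_2)$. By the defining formula~\eqref{eq:nu} the planes $\nu(P)$ and $\nu(Q)$ carry Euclidean normals $n_P=(p_2,-p_1,1)$ and $n_Q=(q_2,-q_1,1)$, so the intersection line $\nu(L)=\nu(P)\cap\nu(Q)$ has direction $n_P\times n_Q$. A one-line cross-product computation gives $n_P\times n_Q=(d_1,d_2,\,p_1q_2-p_2q_1)$, whose first two entries are exactly $(d_1,d_2)$. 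Hence $L$ and $\nu(L)$ have the same top view direction, i.e.\ parallel top views.

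For $\delta$ I would either repeat the computation verbatim, now with normals $m_P=(p_1,p_2,-1)$ and $m_Q=(q_1,q_2,-1)$, obtaining $m_P\times m_Q=(d_2,-d_1,\,p_1q_2-p_2q_1)$ with top view $(d_2,-d_1)\perp(d_1,d_2)$; or, more conceptually, I would avoid the second computation altogether. As noted just before the statement, $\delta=\rho\circ\nu$, where $\rho$ denotes the rotation by $\pi/2$ about the $z$-axis followed by the reflection in $z=0$. Since $\rho$ is an affine bijection and therefore commutes with intersections of planes, $\delta(L)=\rho(\nu(L))$, and the top view of $\rho$ is a quarter turn. Thus the top view of $\delta(L)$ arises from that of $\nu(L)$ — hence from that of $L$ — by a rotation through $\pi/2$, which is precisely orthogonality.

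The argument is essentially a single cross product, so I do not expect a genuine obstacle in the computation itself; the only points demanding care are formal. First, one must justify that the image of a line under the correlation is well defined (independent of $P,Q$), which is the line-to-line property of correlations. Second, one must flag the degenerate case of an isotropic (vertical) $L$: there $\nu(P)\parallel\nu(Q)$ and likewise for $\delta$, so the intersection recedes into the plane at infinity and the statement has to be read in the projective extension, where $L$ carries no proper top view direction. Away from this boundary case the bookkeeping of the two normals is all that is needed.
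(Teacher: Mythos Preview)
Your proof is correct and follows essentially the same approach as the paper: choose two points on $L$, compute the cross product of the Euclidean normals of their image planes, and compare the first two coordinates with the top view direction of $L$. The only differences are cosmetic (sign conventions on the normals) and your added alternative for $\delta$ via $\delta=\rho\circ\nu$, which the paper mentions just before the lemma but does not exploit in the proof.
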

\begin{proof}
  Let $L$ be spanned by two points $p_1 = (u_1, v_1, w_1)$, 
  $p_2 = (u_2, v_2, w_2) \in \R^3$.
  The two planes $\nu(p_1), \nu(p_2)$ have Euclidean normal vectors
  $(-v_1, u_1, -1)$ and $(-v_2, u_2, -1)$, respectively.
  The direction of the line of intersection of these two planes is parallel
  to the cross product of their Euclidean normal vectors
  \begin{equation*}
    \sv{-v_1\\ \hphantom{-}u_1\\ -1} \times \sv{-v_2\\ \hphantom{-}u_2\\ -1} 
    =
    \sv{u_2 - u_1\\ v_2 - v_1\\ *},
  \end{equation*}
  and therefore parallel to $p_2 - p_1$ in the top view. On the other hand
  the Euclidean normal vectors of $\delta(p_1)$ and $\delta(p_2)$ read 
  $(u_1, v_1, -1)$ and $(u_2, v_2, -1)$, respectively. Their cross product
  is
    \begin{equation*}
    \sv{\hphantom{-}u_1\\ \hphantom{-}v_1\\ -1} \times 
    \sv{\hphantom{-}u_2\\ \hphantom{-}v_2\\ -1} 
    =
    \sv{\hphantom{-}v_2 - v_1\\ -(u_2 - u_1)\\ *}.
    \end{equation*}
  Therefore, the top view of the intersection line is orthogonal to $p_2 -
  p_1$.
\end{proof}

\paragraph{Graph representation and its dual}

Typically surfaces are considered as sets of points expressed with
the help of a parametrization, an equation, explicitly or implicitly. 
The so called \emph{dual representation} however describes a surface as
being enveloped by its tangent planes. 
Let us recall the classical fact that any generic two-parameter family of
planes envelopes a surface.
\begin{lem}
  \label{lem:dualrep}
  In $E^3$ let $\tau(u, v) \colon \la x, n(u, v)\ra - d(u, v) = 0$ denote
  the equations of a two-parameter family of planes with Euclidean normal
  vectors $n$ and oriented distance $d/\|n\|$ to the origin. Let
  furthermore $n, n_u, n_v$ be linearly independent.
  Then these planes envelope a surface $e(u, v)$ which is given by
  \begin{equation*}
    e(u, v)
    =
    N^{-1} D,
    \ \text{where}\ 
    N(u, v)
    =
    \begin{pmatrix}
      n, n_u, n_v
    \end{pmatrix}^\top
    \in\R^{3 \times 3}
    \ \text{and}\ 
    D(u, v)
    =
    \begin{pmatrix}
      d
      ,
      d_u 
      ,
      d_v 
    \end{pmatrix}^\top.
  \end{equation*}
  The point representation $e$ is recovered by intersecting the three
  planes $\tau, \tau_u, \tau_v$.
\end{lem}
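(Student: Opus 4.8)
The plan is to invoke the classical envelope condition: a point $e(u, v)$ lies on the surface enveloped by the family $\tau$ precisely when it satisfies the plane equation together with the two equations obtained by partially differentiating the plane equation with respect to $u$ and $v$ at a fixed point $x$. Writing these out, the envelope is characterized by $\tau = 0$, $\tau_u = 0$, and $\tau_v = 0$, that is, by the three scalar conditions
\begin{equation*}
  \la e, n\ra = d, \qquad \la e, n_u\ra = d_u, \qquad \la e, n_v\ra = d_v.
\end{equation*}

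First I would regard these three equations as a single linear system for the unknown vector $e \in \R^3$. Stacking them row by row yields exactly $N e = D$ with $N = (n, n_u, n_v)^\top$ and $D = (d, d_u, d_v)^\top$. By hypothesis $n, n_u, n_v$ are linearly independent, so $N$ is invertible and the system has the unique solution $e = N^{-1} D$, which is the claimed formula. To confirm that this $e$ genuinely describes the envelope, I would differentiate the first equation $\la e, n\ra = d$ with respect to $u$, obtaining $\la e_u, n\ra + \la e, n_u\ra = d_u$; the second equation then forces $\la e_u, n\ra = 0$, and an analogous computation in the $v$-direction gives $\la e_v, n\ra = 0$. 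Thus both tangent vectors of $e$ are orthogonal to $n$, so $n$ is a normal of the surface and $\tau(u, v)$ is its tangent plane at $e(u, v)$, exactly as required for an envelope.

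Finally, I would observe that the three scalar equations above, read as conditions on $x$, are precisely the equations of the planes $\tau$, $\tau_u$, and $\tau_v$. Their normal vectors $n, n_u, n_v$ are linearly independent, so these three planes meet in a single point, which by construction is $e$; this establishes the geometric description of $e$ as the intersection of $\tau, \tau_u, \tau_v$. I do not expect any serious obstacle, since everything reduces to linear algebra once the envelope condition is in place. The only point requiring care is the justification of the envelope condition itself, namely that differentiating the defining equation of the plane family in the parameter directions singles out the characteristic points; this is standard, and the nondegeneracy assumption on $n, n_u, n_v$ is precisely what guarantees that the resulting point is well defined and that the enveloped object is an honest regular surface.
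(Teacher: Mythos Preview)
Your proposal is correct and follows essentially the same approach as the paper: both set up the system $Ne=D$, solve for $e$, and then verify that $\tau$ is the tangent plane by showing $\la e_u,n\ra=\la e_v,n\ra=0$. Your tangency check is marginally slicker---you differentiate the relation $\la e,n\ra=d$ and subtract the second envelope equation---whereas the paper computes $e_u=-N^{-1}N_uN^{-1}D+N^{-1}D_u$ explicitly and pairs it with $n$ via matrix manipulations; but this is a cosmetic difference, not a different route.
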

\begin{proof}
  First of all $e(u, v)$ lies in $\tau(u, v)$, since 
  $\la e, n\ra - d 
  = \la N^{-1} D, n\ra - d 
  = D^\top N^{-\top} n - d
  = D^\top \left(\begin{smallmatrix}1\\0\\0\end{smallmatrix}\right) - d
  = 0$.
  The partial derivative vector of $e$ differentiated by $u$ reads
  $e_u = -N^{-1} N_u N^{-1} D + N^{-1} D_u$ which leads to 
  $\la e_u, n\ra
  = (-D^\top N^{-\top} N_u^{\top} N^{-\top} + D_u^{\top} N^{-\top}) n
  =  -D^\top N^{-\top} N_u^{\top} 
     \left(\begin{smallmatrix}1\\0\\0\end{smallmatrix}\right)
     + 
     D_u^{\top} 
     \left(\begin{smallmatrix}1\\0\\0\end{smallmatrix}\right)
  =  -D^\top N^{-\top} n_u
     + 
     d_u
  =  -D^\top 
     \left(\begin{smallmatrix}0\\1\\0\end{smallmatrix}\right)
     + 
     d_u
  = 0$,
  and analogously $\la e_v, n\ra = 0$,
  which implies that $\tau$ is the tangent plane of $e$.
\end{proof}

\begin{figure}[t]
  \begin{overpic}[width=.41\textwidth]{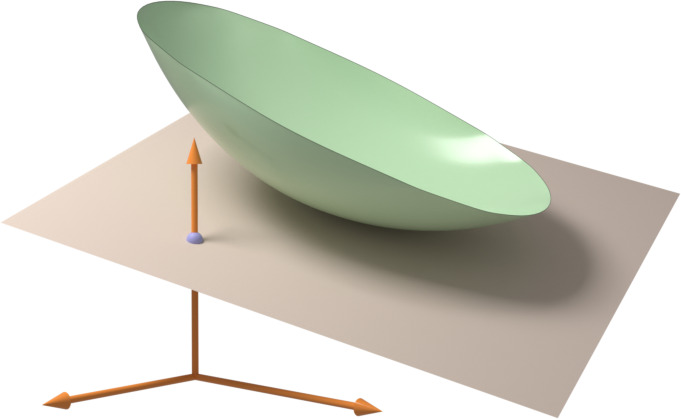}
    \put(47,56){\small$F$}
    \put(4,27){\small\contour{white}{$-h(p, q)$}}
    \put(66,7){\small\contour{white}{$T(p, q)$}}
    \put(3,3){\small$x$}
    \put(57,0){\small$y$}
    \put(24,40){\small\contour{white}{$z$}}
  \end{overpic}
  \hfill
  \begin{overpic}[width=.30\textwidth]{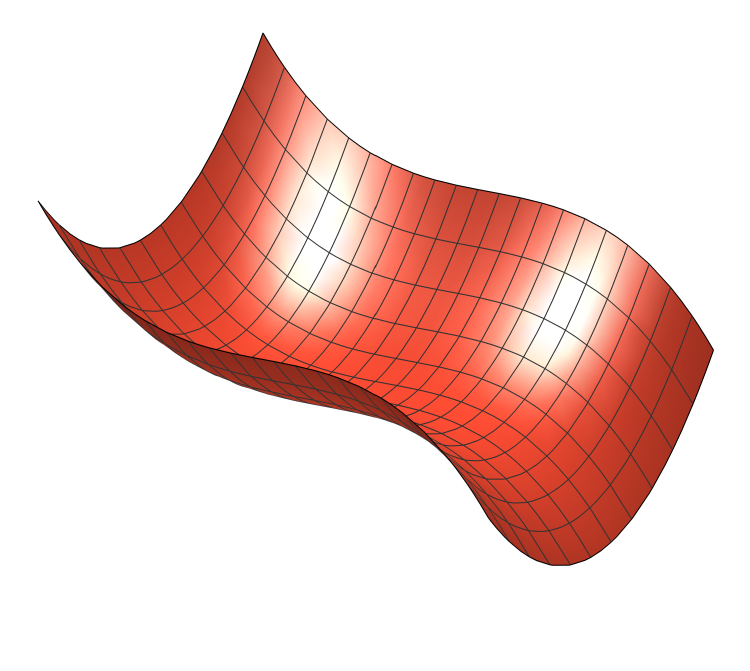}
    \put(25,29){\small$F$}
  \end{overpic}
  \begin{overpic}[width=.23\textwidth]{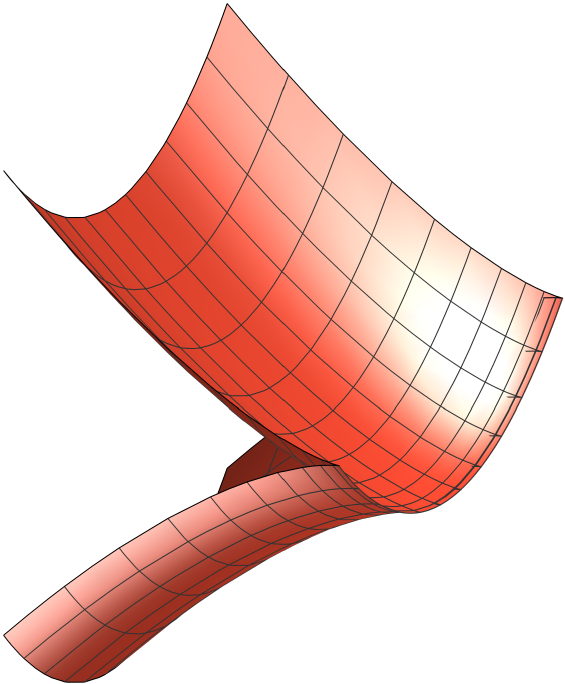}
    \put(41,12){\small$\delta(F)$}
  \end{overpic}
  \caption{\emph{Left:} 
  Illustration of the isotropic support function $h(p, q)$ which measures
  the distance on the $z$-axis between the origin and the intersection of
  the tangent plane at $(p, q)$ with the $z$-axis.
  \emph{Right:}
  A surface $F(u, v) = (u, v, \frac{1}{2} (u^2 - v^3))$ and its metric dual
  $\delta(F)$ parametrized by 
  $(u, -\frac{3}{2} v^2, \frac{1}{2} (u^2 - 2 v^3))$.
  Note that the Gaussian image of $F$ is not regular resulting in a
  cuspidal edge at $\delta(F)$.}
  \label{fig:stuetzfkt}
\end{figure}

Every admissible surface in $I^3$ with non-vanishing Gaussian curvature is
enveloped by the family of its tangent planes 
$T(u, v) \colon z = u x + v y - h(u, v)$ which serves as its dual
representation. Now $(u, v)$ are parameters of the isotropic
Gauss image and $(0, 0, -h(u, v))$ is the intersection point of the $z$-axis
with $T$ (see Figure~\ref{fig:stuetzfkt} left). Since the $z$-axis is an
\emph{isotropic normal} to $T$ we call $h(u, v)$ the \emph{isotropic
support function}.
We obtain its point representation $F(u, v)$ by applying
Lemma~\ref{lem:dualrep}:
\begin{equation}
  \label{eq:surfacefromsupp}
  F(u, v) 
  = 
  \left(\begin{smallmatrix}
    u & v & -1\\
    1 & 0 & \phantom{-}0 \\
    0 & 1 & \phantom{-}0
  \end{smallmatrix}\right)^{-1}
  \left(\begin{smallmatrix}
    h\\
    h_u\\
    h_v
  \end{smallmatrix}\right)
  =
  (h_u, h_v, h_u u + h_v v - h).
\end{equation}

\paragraph{Contact elements of a surface}

The equation of the tangent plane of a surface $F(u, v) = (u, v, f(u, v))$
at a parameter $(u, v)$ equals 
\begin{equation}
  \label{eq:tangentplane}
  f_u x + f_v y - z - (f_u u + f_v v - f) = 0.
\end{equation}
The following schematic overview illustrates the relations between surface
parametrizations, tangent planes of the dual representation and their
image under the metric duality $\delta$.

\begin{center}
\begin{tikzcd}[row sep=25, column sep=30]
  (u, v, f) 
  \arrow[r, shift left, "\text{tangent plane}"]
  \arrow[r, <-, shift right, swap, "\text{envelope}"]
  \arrow[d, <->, "\delta"]
  & 
    f_u x + f_v y - z - (f_u u + f_v v -f) = 0
  \arrow[d, <->, "\delta"] 
  \\
  u x + v y - z - f = 0
  \arrow[r, shift left, "\text{envelope}"]
  \arrow[r, <-, shift right, swap, "\text{tangent plane}"]
  & 
  (f_u, f_v, f_u u + f_v v -f) 
\end{tikzcd}
\end{center}

The arrows in the bottom row follow from Lemma~\ref{lem:dualrep} as we 
apply it to $\tau(u, v) \colon u x + v y - z - f = 0$.
Consequently, the construction of contact elements of surfaces commutes
with the metric duality $\delta$. Analogous arguments for $\nu$ imply the
following lemma. See Figure~\ref{fig:stuetzfkt} (right).

\begin{lem}
  The image of a surface $F(u, v) = (u, v, f(u, v))$ under metric duality
  is a surface $\delta(F)$ or $\nu(F)$, where points correspond to tangent
  planes and vice versa. The contact elements consisting of tangent planes
  and contact points of an admissible surface $F$ and its metric
  dual correspond via 
  \begin{equation}
    \label{eq:contact1} 
    \delta\colon (u, v, f, f_u, f_v) 
    \longleftrightarrow
    (f_u, f_v, f_u u + f_v v - f, u, v). 
  \end{equation}
  or
  \begin{equation} 
    \label{eq:contact2} 
    \nu\colon (u, v, f, f_u, f_v) 
    \longleftrightarrow
    (f_v, -f_u, f - f_u u - f_v v, -v, u). 
  \end{equation}
\end{lem}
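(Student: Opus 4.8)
The plan is to reduce everything to the contact-element formulas already established in Lemma~\ref{lemma1} and to read off the surface interpretation from the commuting diagram drawn above. First I would identify the contact element of $F$ at a parameter $(u,v)$: by~\eqref{eq:tangentplane} the tangent plane has Euclidean normal $(f_u,f_v,-1)$, so the contact element is the quintuple $(u,v,f,f_u,f_v)$. Substituting $(u,v,w,p,q)=(u,v,f,f_u,f_v)$ into the two lines of Lemma~\ref{lemma1} then yields~\eqref{eq:contact1} and~\eqref{eq:contact2} by a direct calculation, with no further work needed for the quintuples themselves.

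The substantive point is to justify that these image quintuples really are the contact elements (contact point together with incident tangent plane) of a dual surface, i.e.\ that points and tangent planes swap roles. For $\delta$ this is exactly the commutativity of the diagram preceding the statement. Concretely, I would dualize the point $(u,v,f)$ of $F$ to the plane $ux+vy-z-f=0$ (the bottom-left entry) and then compute the envelope of this two-parameter family of planes. Applying Lemma~\ref{lem:dualrep} --- equivalently, reading~\eqref{eq:surfacefromsupp} with support function $h=f$ --- gives the envelope point $(f_u,f_v,f_uu+f_vv-f)$. On the other hand, dualizing the tangent plane of $F$ directly by Lemma~\ref{lemma1} produces the \emph{same} point. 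Hence the two paths of the diagram agree: the $\delta$-image of the contact \emph{point} of $F$ is a plane tangent to $\delta(F)$, while the $\delta$-image of the tangent \emph{plane} of $F$ is the contact point of $\delta(F)$. This is precisely the asserted point/plane correspondence.

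For $\nu$ I would argue identically, replacing the family $ux+vy-z-f=0$ by its $\nu$-image and computing the corresponding envelope; alternatively, one can deduce the $\nu$-case from the $\delta$-case using the rigid motion (rotation by $\pi/2$ about the $z$-axis followed by reflection in $z=0$) that carries $\nu(E)$ to $\delta(E)$, as noted before Lemma~\ref{lem:topviewoflines}.

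The only genuine subtlety --- and the place where I expect the main obstacle --- is the word \emph{surface} in the statement. The envelope exists and is well defined because the normals $n=(u,v,-1)$, $n_u=(1,0,0)$, $n_v=(0,1,0)$ are linearly independent, so Lemma~\ref{lem:dualrep} always applies. However, the resulting map $(u,v)\mapsto(f_u,f_v,\dots)$ is a regular immersed surface only where its top-view Jacobian $f_{uu}f_{vv}-f_{uv}^2=K$ is non-zero; at points of vanishing isotropic Gaussian curvature the dual degenerates and may develop singularities such as the cuspidal edge visible in Figure~\ref{fig:stuetzfkt}. The contact-element correspondence~\eqref{eq:contact1}--\eqref{eq:contact2}, being a purely pointwise projective statement, holds regardless, so I would phrase the conclusion so that the incidence of points and planes is the primary assertion and regularity of $\delta(F)$, $\nu(F)$ is understood to require $K\neq0$.
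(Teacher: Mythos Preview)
Your proposal is correct and follows essentially the same route as the paper: the paper derives the lemma from the commuting diagram immediately preceding it, obtaining the bottom row by applying Lemma~\ref{lem:dualrep} to the plane family $u x + v y - z - f = 0$ and then invoking ``analogous arguments for $\nu$''. Your write-up is in fact more explicit than the paper's, spelling out the use of Lemma~\ref{lemma1} for the quintuples and flagging the regularity caveat $K\neq 0$ that the paper only alludes to via Figure~\ref{fig:stuetzfkt}.
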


A parametrization $f(u, v)$ of a surface is called \emph{conjugate} or
\emph{conjugate curve network} or a \emph{Q-net} if at each point $f_{uv}
\in \myspan(f_u, f_v)$. 
A parametrization $f(u, v)$ of a surface is called \emph{asymptotic} or
\emph{asymptotic curve network} or an \emph{A-net} if at each point
$f_{uu}, f_{vv} \in \myspan(f_u, f_v)$. 
Projective dualities map Q-nets to Q-nets and A-nets to A-nets.

\begin{defn}
  Two surfaces $F$ and $\bar F$ are related by a \emph{Weingarten
  transformation} if the straight line connecting $F(u, v)$ and 
  $\bar F(u, v)$ in corresponding points lies in both tangent planes and
  if in this correspondence any conjugate curve network on $F$ is
  associated with a conjugate curve network on $\bar F$.
\end{defn}

\begin{cor}
  \label{cor:weingarten}
  Any surface $F$ and its metric dual $\nu(F)$ are related by a Weingarten
  transformation.
\end{cor}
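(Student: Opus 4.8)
The plan is to verify the two defining conditions of a Weingarten transformation directly from the contact-element correspondence \eqref{eq:contact2}. First I would record the pair of contact elements that $\nu$ matches at the parameter $(u,v)$. The element of $F$ is $(u,v,f,f_u,f_v)$, i.e. the point $F=(u,v,f)$ together with its tangent plane $T_F\colon f_u x+f_v y-z-(f_u u+f_v v-f)=0$ from \eqref{eq:tangentplane}. By \eqref{eq:contact2} the corresponding element of $\nu(F)$ is $(f_v,-f_u,\,f-f_uu-f_vv,\,-v,u)$, that is, the point $\bar F:=(f_v,-f_u,f-f_uu-f_vv)$ together with the plane of normal $(-v,u,-1)$ through $\bar F$, which works out to $T_{\bar F}\colon -v x+u y-z+f=0$.

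For the first condition I would show that both $F$ and $\bar F$ lie on \emph{both} tangent planes, so that the straight line joining $F$ and $\bar F$ lies in $T_F\cap T_{\bar F}$. Two of the four incidences are automatic ($F\in T_F$ and $\bar F\in T_{\bar F}$), and the remaining two reduce to one-line checks: substituting $F=(u,v,f)$ into $T_{\bar F}$ gives $-vu+uv-f+f=0$, and substituting $\bar F$ into $T_F$ gives $f_uf_v-f_vf_u-(f-f_uu-f_vv)-(f_uu+f_vv-f)=0$. Conceptually, the first identity is nothing but the defining property of a null polarity noted after \eqref{eq:nu} — each point lies on its own dual plane, here $F\in\nu(F)=T_{\bar F}$ — while the second expresses the dual fact that the pole $\bar F=\nu(T_F)$ of the plane $T_F$ lies on $T_F$, which holds because the matrix representing $\nu$ is antisymmetric (so its inverse is again antisymmetric). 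I would present the short computation and point to this structural reason.

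For the second condition I would invoke the fact recorded just above the statement, namely that projective dualities map Q-nets to Q-nets: since $\nu$ is a projective correlation, any conjugate parametrization of $F$, characterized by $f_{uv}\in\myspan(f_u,f_v)$, is carried to a conjugate parametrization of the dual surface $\nu(F)$ in the induced $(u,v)$-parametrization, and conversely. I do not expect a genuine obstacle here; the only points requiring a little care are bookkeeping the sign and constant-term conventions when reading off $T_{\bar F}$ from the quintuple, and checking that the induced parametrization of $\nu(F)$ is exactly the one to which the Q-net preservation applies. Both become routine once the two contact elements above are written out explicitly.
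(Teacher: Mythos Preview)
Your proof is correct and follows exactly the reasoning the paper has in mind: the corollary is stated without proof because the first condition is immediate from the defining property of the null polarity $\nu$ (each point lies on its dual plane, so $F\in T_{\bar F}$ and $\bar F\in T_F$), and the second condition is precisely the fact recorded just before the definition that projective dualities carry Q-nets to Q-nets. You have simply made these two observations explicit.
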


Every quintuple $E = (x, y, z, p, q)$ corresponds to a contact element but
not every two-parameter family of such quintuples $E(u, v)$ describe the
contact elements of a surface. The following lemma characterizes the
condition.

\begin{lem}
  Let $E(u, v) = (x, y, z, p, q)$ be a (sufficiently smooth) two-parameter
  family of contact elements. Then the contact elements $E$ represent
  tangent plane and contact point of a surface parametrized by 
  $(x(u, v), y(u, v), z(u, v))$ if and only if 
  \begin{equation}
    \label{eq:integrability}
    p_v x_u + q_v y_u = p_u x_v + q_u y_v.
  \end{equation}
\end{lem}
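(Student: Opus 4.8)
The plan is to reduce the geometric statement to the closedness of a single differential form. A contact element $(x,y,z,p,q)$ consists of the point $g:=(x,y,z)$ together with the plane through $g$ whose Euclidean normal is $(p,q,-1)$. Saying that the family $E(u,v)$ consists of the contact elements of a surface amounts to two things: the points $g(u,v)$ sweep out a surface, and for each $(u,v)$ the accompanying plane is the tangent plane of that surface at $g(u,v)$. First I would record that, wherever $g$ is an immersion, this tangency is equivalent to the normal $(p,q,-1)$ being orthogonal to both tangent vectors $g_u=(x_u,y_u,z_u)$ and $g_v=(x_v,y_v,z_v)$.

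Writing out these two orthogonality relations gives $z_u=p\,x_u+q\,y_u$ and $z_v=p\,x_v+q\,y_v$, that is $dz=p\,dx+q\,dy$ along the parametrization. The step I would emphasize is the reinterpretation of these equations: they should be read as prescribing how the height $z$ must vary in terms of the remaining data $x,y,p,q$, so that the top view of the point and the slopes of the plane already dictate $z_u$ and $z_v$. A function $z(u,v)$ with these prescribed first partials exists locally exactly when the corresponding mixed second partials agree, $\partial_v(p\,x_u+q\,y_u)=\partial_u(p\,x_v+q\,y_v)$.

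The remaining step is the routine expansion of these mixed partials. Using the symmetry $x_{uv}=x_{vu}$ and $y_{uv}=y_{vu}$, which the smoothness hypothesis guarantees, the terms $p\,x_{uv}$ and $q\,y_{uv}$ cancel against their counterparts and what survives is precisely $p_v x_u+q_v y_u=p_u x_v+q_u y_v$, i.e. equation \eqref{eq:integrability}. This settles necessity. For sufficiency I would invoke the Poincar\'e lemma: once \eqref{eq:integrability} holds, the one-form $p\,dx+q\,dy$ is closed, hence locally exact, so $z$ can be integrated back (uniquely up to an additive constant), and the argument above then runs in reverse to certify that $(x,y,z,p,q)$ are indeed the contact elements of $g$.

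I do not expect a computational obstacle; the only point demanding care is the conceptual move in the second paragraph, namely that $z$ is not independent data but is slaved to $x,y,p,q$ by the tangency condition, so that the two orthogonality equations are jointly solvable for a single $z$ exactly under the one scalar condition \eqref{eq:integrability}. A secondary remark concerns the locus where $g$ degenerates from an immersion; there I would fall back on the dual/envelope description of Lemma~\ref{lem:dualrep} for the two-parameter family of planes with normals $(p,q,-1)$ passing through the points $g$, and check that the same condition \eqref{eq:integrability} forces the envelope of these planes to pass through $g$, so that the characterization is unaffected.
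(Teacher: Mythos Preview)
Your proposal is correct and follows essentially the same route as the paper: orthogonality of $(p,q,-1)$ to $g_u,g_v$ yields the system $z_u=p\,x_u+q\,y_u$, $z_v=p\,x_v+q\,y_v$, whose integrability condition $\partial_v(\cdot)=\partial_u(\cdot)$ reduces to \eqref{eq:integrability} after the symmetric second partials cancel. Your additional remarks on the Poincar\'e lemma for sufficiency and on the degenerate locus go beyond what the paper records, but the core argument is identical.
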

\begin{proof}
  For $E$ to describe contact elements of a surface the vector 
  $(p, q, -1)$ must be orthogonal to the tangent plane. Therefore, we must
  have 
  $
  \la 
  \Big(
  \begin{smallmatrix}\phantom{-}p\\ \phantom{-}q\\ -1\end{smallmatrix}
  \Big),
  \Big(
  \begin{smallmatrix}x_u\\ y_u\\ z_u\end{smallmatrix}
  \Big)
  \ra 
  =
  \la 
  \Big(
  \begin{smallmatrix}\phantom{-}p\\ \phantom{-}q\\ -1\end{smallmatrix}
  \Big),
  \Big(
  \begin{smallmatrix}x_v\\ y_v\\ z_v\end{smallmatrix}
  \Big)
  \ra 
  = 0$, which yields equations
  \begin{equation}
    \label{eq:system}
    z_u = p x_u + q y_u,
    \qquad
    z_v = p x_v + q y_v.
  \end{equation}
  This system of PDEs is integrable if and only if
  \begin{equation*}
    (p x_u + q y_u)_v = (p x_v + q y_v)_u,
  \end{equation*}
  which is equivalent to Equation~\eqref{eq:integrability}.
\end{proof}

\paragraph{Minkowski sum and its dual}

Let us consider two surfaces $F_1, F_2$ in Euclidean space which are
parametrized such that their tangent planes are parallel at the same
parameter $(u, v)$.
Let us furthermore denote their support functions by $h_1$
and $h_2$. Then the \emph{Minkowski sum} of $F_1$ and $F_2$ has the
support function $h_1 + h_2$ (see Figure~\ref{fig:minkowskisum} left). 

We translate this property into isotropic space where we consider two
admissible surfaces $F_1, F_2$ together with their isotropic support
functions $h_1, h_2$ which measure the height of the intersection point of
the tangent plane with the $z$-axis.
In analogy to the Euclidean case, we also call the surface that is given
by the isotropic support function $h_1 + h_2$ \emph{Minkowski sum} (see
Figure~\ref{fig:minkowskisum} center). The new surface can be computed
with the formula in Lemma~\ref{lem:dualrep}.

More generally we will consider surfaces $F^t$ corresponding to the
support function $h_1 + t h_2$ and also call the result a \emph{Minkowski
sum} (or \emph{general offset}) with parameter $t$. 
If translations are applied to the two surfaces $F_1, F_2$ then the
Minkowski sum of the two surfaces also undergoes a translation.

While the classical Minkowski sum is based on the parallelism of tangent
\emph{planes}, in $I^3$ we can ``dualize'' the Minkowski sum and base it
upon parallelism of \emph{points} (see
Definition~\ref{def:parallelpoints}). 
Hence, for two surfaces represented as graphs $F_i(u, v) = (u, v, f_i(u,
v))$, the surfaces $F^t(u, v) = (u, v, f_1(u, v) + t f_2(u, v))$ are the
dual counterparts to Minkowski sums, and we simply call them \emph{sum
surfaces} with parameter $t$ (see Figure~\ref{fig:minkowskisum} right).
In particular, $(u, v, f_1 - f_2)$ is called \emph{difference surface}.

The sum of two surfaces undergoes a $z$-parallel shearing 
if the individual surfaces undergo such shearing transformations.
These shearings, which are congruence transformations in $I^3$, are the
dual counterparts to the translation appearing at Euclidean Minkowski
sums. 

A classical Euclidean offset of a surface $f$ is obtained by moving each
point of $f$ along its normal by a constant distance, say $t$.
Equivalently, the offset is the surface given by the support function $h +
t \cdot 1$ where $h$ is the support function of $f$. Note that the support
function of the Euclidean unit sphere is constant $1$. Let us now
translate the notion of offset into isotropic geometry.

The isotropic analogon of an offset is therefore given by a support
function of the form $h + t h_s$ where $h$ denotes the support function of
the given surface $f$ and $h_s$ the support function of the unit sphere.
However, in isotropic space, taking the sum of surfaces is possible in
two ways, in the setting of parallel planes and parallel points.

In the first case we compute the tangent plane of the unit sphere of
parabolic type with equation $2 z = x^2 + y^2$ which is $u x + v y - z -
\frac{1}{2} (u^2 + v^2) = 0$ (cf.\ Equation~\eqref{eq:tangentplane}).
Its support function $h_s$ therefore reads $h_s(u, v) = (u^2 + v^2)/2$.
The \emph{plane based offset} $F^t$ is the surface represented by the
isotropic support functions $h(u, v) + t (u^2 +v^2)/2$.

In the second case the \emph{point based offsets} is given as sum surface
of $f$ and $t$ times the unit sphere, i.e., $(u, v, f(u, v) + t (u^2 +
v^2)/2)$.

\begin{figure}[t]
  \begin{overpic}[width=.42\textwidth]{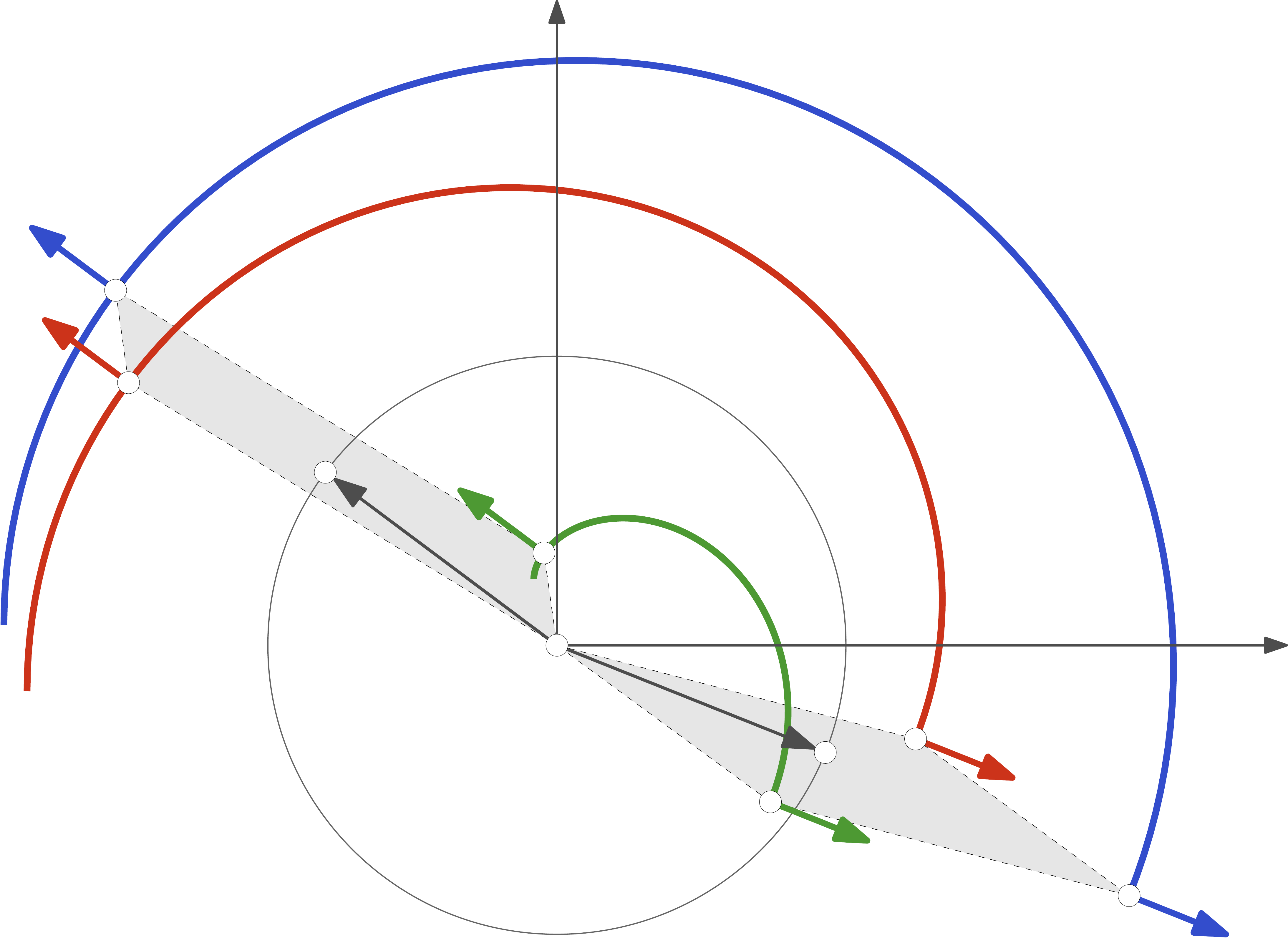}
    \put(65,48){\small$F_1$}
    \put(49,33){\small$F_2$}
    \put(70,61){\small$F_1\! +\! F_2$}
    \put(42,1){\small$S^1$}
  \end{overpic}
  \begin{overpic}[width=.35\textwidth]{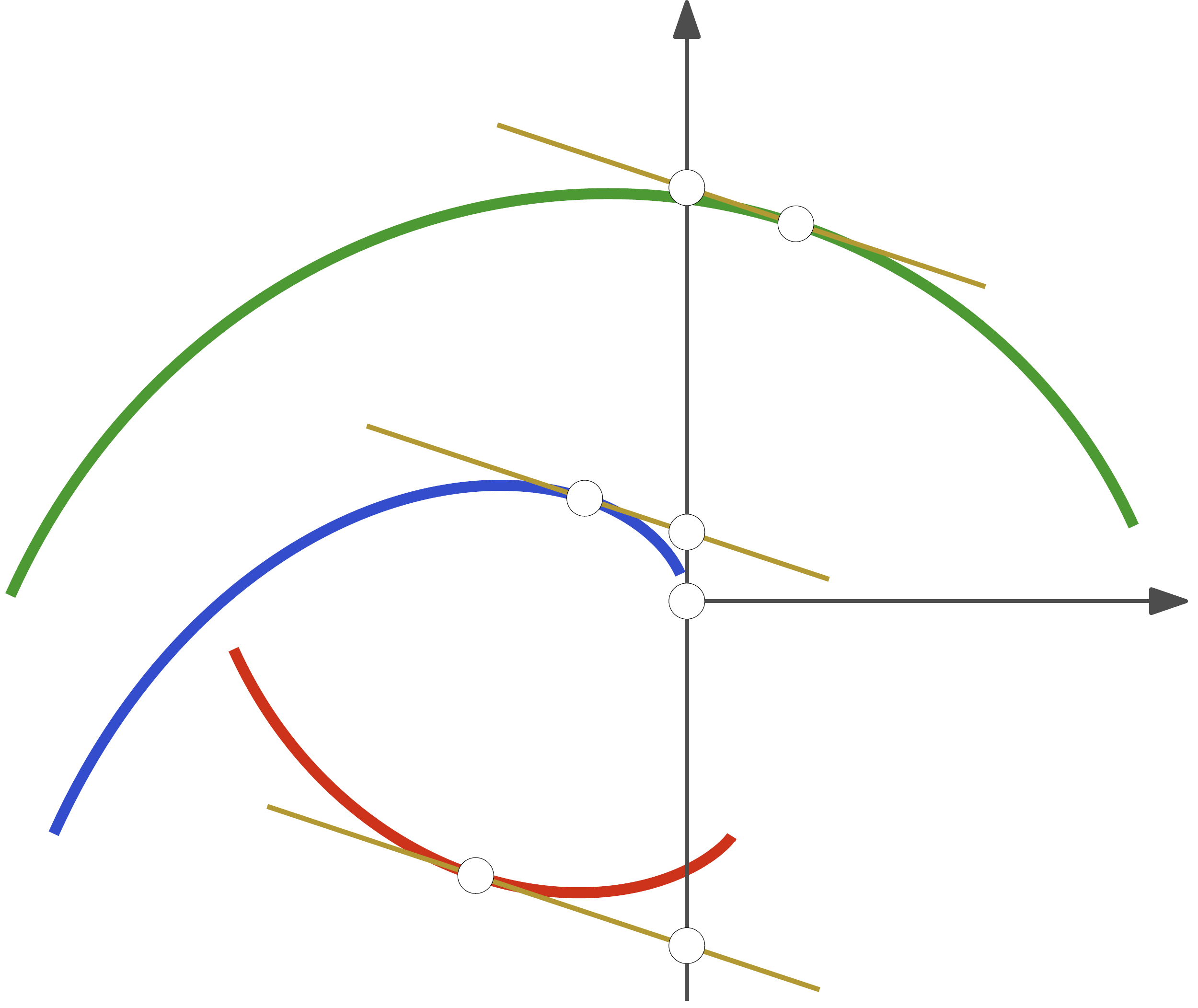}
    \put(62,12){\small$F_1$}
    \put(14,61){\small$F_2$}
    \put(30,35){\small$F_1\! +\! F_2$}
    \put(44,0){\small$-h_1$}
    \put(58,70){\small$-h_2$}
    \put(58,41){\small$-h_1\! -\! h_2$}
  \end{overpic}
  \begin{overpic}[width=.21\textwidth]{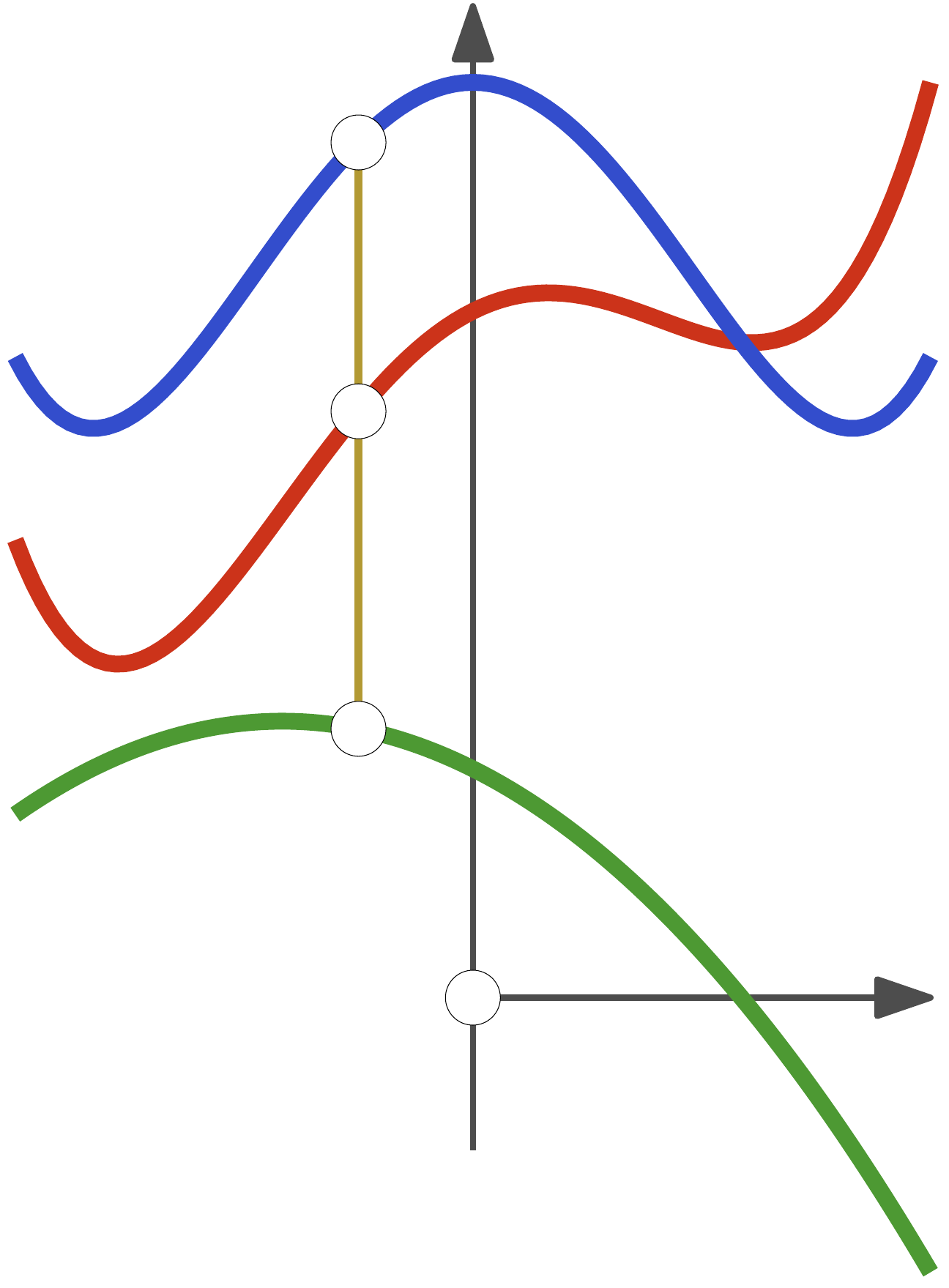}
    \put(60,86){\small$F_1$}
    \put(51,31){\small$F_2$}
    \put(-2,90){\small$F_1\! +\! F_2$}
  \end{overpic}
  \caption{Three types of Minkowski sums illustrated by curves.
  \emph{Left:} \emph{Euclidean} Minkowski sum $F_1 + F_2$ of two curves
  $F_1$ (red) and $F_2$ (green) obtained by adding points with parallel
  tangent planes and parallel normal vectors. \emph{Center:}
  \emph{Isotropic} Minkowski sum $F_1 + F_2$ of two curves $F_1$ (red) and
  $F_2$ (green). The support function of the Minkowski sum is the sum of
  the support functions of $F_1$ and $F_2$. \emph{Right:} The sum of
  functions is the dual of the Minkowski sum in the setting of
  point-parallelism.}
  \label{fig:minkowskisum}
\end{figure}

\subsection{Curvatures of surfaces in $I^3$}

The isotropic Gaussian curvature plays a crucial role in our definition of
isometric surfaces. Therefore, we first take a look at representations
of curvature notions of surfaces and their Minkowski sums as well as how
they change under dualities.

\paragraph{Change of curvatures under metric duality}

To see how curvatures transform under metric dualities $\delta$ and $\nu$,
it is sufficient to consider the transformation of an osculating
paraboloid $P(u, v) = (u, v, f(u, v)) = (\kappa_1 u^2 + \kappa_2 v^2)/2$
which has constant isotropic principal curvatures $\kappa_1, \kappa_2$
everywhere (cf.\ Example~\ref{ex:paraboloid}). 
Its contact elements have the form 
$E = (u, v, (\kappa_1 u^2 + \kappa_2 v^2)/2, \kappa_1 u, \kappa_2 v)$
which under $\delta$ and $\nu$ transform to (see
Equations~\eqref{eq:contact1} and~\eqref{eq:contact2})
\begin{equation*}
  \delta(E) 
  = 
  (\kappa_1 u, \kappa_2 v, \frac{\kappa_1 u^2 + \kappa_2 v^2}{2}, u, v),
  \quad
  \nu(E) 
  = 
  (\kappa_2 v, -\kappa_1 u, -\frac{\kappa_1 u^2 + \kappa_2 v^2}{2}, -v, u).
\end{equation*}
Then, the equations of the transformed paraboloids read
\begin{equation*}
  \delta(P) \colon 2 z = \frac{1}{\kappa_1} x^2 + \frac{1}{\kappa_2} y^2,
  \qquad
  \nu(P) \colon -2 z = \frac{1}{\kappa_2} x^2 + \frac{1}{\kappa_1} y^2.
\end{equation*}
They are related to each other by the Euclidean $\pi/2$ rotation and
the Euclidean reflection about the $xy$-plane as mentioned above. This
shows the following lemma.

\begin{lem}
  The curvature measures $\kappa_1, \kappa_2, K, H$ change under the
  metric dualities $\delta, \nu$ according to
  \begin{equation}
    \label{eq:dualcurv}
    \delta \colon 
    (\kappa_i, K, H) \to 
    \big(\frac{1}{\kappa_i}, \frac{1}{K}, \frac{H}{K}\big), 
    \quad
    \nu \colon 
    (\kappa_i, K, H) \to
    \big(-\frac{1}{\kappa_i}, \frac{1}{K}, -\frac{H}{K}\big). 
  \end{equation}
\end{lem}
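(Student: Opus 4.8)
The plan is to exploit the fact, recorded just before Example~\ref{ex:paraboloid}, that the Gaussian, mean, and principal curvatures of a surface depend only on its second-order Taylor data and hence agree with the corresponding curvatures of the osculating paraboloid. It therefore suffices to verify the four transformation rules in~\eqref{eq:dualcurv} for a single osculating paraboloid $P(u,v) = (u, v, (\kappa_1 u^2 + \kappa_2 v^2)/2)$ carrying prescribed constant principal curvatures $\kappa_1, \kappa_2$ everywhere. This is precisely the object whose dual images $\delta(P)$ and $\nu(P)$ have already been computed in the paragraph above via the contact-element formulas~\eqref{eq:contact1} and~\eqref{eq:contact2}.

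First I would take the two displayed equations $\delta(P)\colon 2z = \tfrac{1}{\kappa_1} x^2 + \tfrac{1}{\kappa_2} y^2$ and $\nu(P)\colon -2z = \tfrac{1}{\kappa_2} x^2 + \tfrac{1}{\kappa_1} y^2$ and observe that both are again paraboloids in the standard form $z = (a x^2 + b y^2)/2$. For $\delta(P)$ this gives $a = 1/\kappa_1$, $b = 1/\kappa_2$, while for $\nu(P)$ it gives $a = -1/\kappa_2$, $b = -1/\kappa_1$. Applying Example~\ref{ex:paraboloid} then reads off the principal curvatures directly as $1/\kappa_1, 1/\kappa_2$ and $-1/\kappa_2, -1/\kappa_1$ respectively; after relabeling the two principal directions, which is legitimate because the $\pi/2$ rotation relating $\nu(P)$ to $\delta(P)$ interchanges the $x$- and $y$-axes, this becomes the claimed $\kappa_i \mapsto 1/\kappa_i$ under $\delta$ and $\kappa_i \mapsto -1/\kappa_i$ under $\nu$.

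It then remains to combine these principal curvatures into $K$ and $H$ through $K = \kappa_1 \kappa_2$ and $H = (\kappa_1 + \kappa_2)/2$. For $\delta$ this yields $K \mapsto \tfrac{1}{\kappa_1 \kappa_2} = 1/K$ and, using $\kappa_1 + \kappa_2 = 2H$, $H \mapsto \tfrac12\big(\tfrac{1}{\kappa_1} + \tfrac{1}{\kappa_2}\big) = \tfrac{\kappa_1 + \kappa_2}{2 \kappa_1 \kappa_2} = H/K$; the same computation with the overall extra sign gives $K \mapsto 1/K$ and $H \mapsto -H/K$ for $\nu$, exactly matching~\eqref{eq:dualcurv}.

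Since every step is a direct substitution into Example~\ref{ex:paraboloid} followed by elementary algebra, I do not anticipate any genuine obstacle; the computation is essentially complete once the transformed paraboloid equations are in hand. The only point demanding care is the bookkeeping of signs and of the index labelling for $\nu$: the reflection about the $xy$-plane flips the sign of $z$, hence of each $\kappa_i$ and of $H$ while leaving the product $K$ invariant, and the $\pi/2$ rotation swaps which reciprocal sits on which coordinate axis, so one must confirm that the final identification of $-1/\kappa_2, -1/\kappa_1$ with $-1/\kappa_i$ is consistent with the chosen ordering of principal directions.
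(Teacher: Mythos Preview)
Your proposal is correct and follows essentially the same approach as the paper: both reduce to the osculating paraboloid, use the contact-element formulas to obtain the equations of $\delta(P)$ and $\nu(P)$, and then read off the transformed curvatures. You are simply more explicit about the final step of applying Example~\ref{ex:paraboloid} and combining the principal curvatures into $K$ and $H$, which the paper leaves to the reader.
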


\paragraph{Curvatures in terms of the support function}

The isotropic Gaussian and mean curvatures have simple representations in
terms of their support function (Prop.~\ref{prop:curvature}). We could
simply obtain these new formulas by computing them using
formulas~\eqref{eq:general-K} and~\eqref{eq:general-H} applied to the
surface parametrization~\eqref{eq:surfacefromsupp} in terms of its support
function. However, note the beautiful argument in the following proof
which relies upon metric duality and the curvature transformation just
obtained in~\eqref{eq:dualcurv}.

\begin{prop}
  \label{prop:curvature}
  Let $F$ be given by its support function $h(u, v)$.
  Then its Gaussian and mean curvature are
  \begin{equation*}
    K = \frac{1}{h_{uu} h_{vv} - h_{uv}^2},
    \qquad
    H = \frac{h_{uu} + h_{vv}}{h_{uu} h_{vv} - h_{uv}^2}. 
  \end{equation*}
\end{prop}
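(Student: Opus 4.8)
The plan is to sidestep the brute-force substitution of~\eqref{eq:surfacefromsupp} into~\eqref{eq:general-K} and~\eqref{eq:general-H}, exactly as the remark preceding the proposition suggests, and instead to exploit metric duality. The starting observation is that $F$ is the $\delta$-dual of the graph of its own support function. Writing $G(u,v) = (u, v, h(u,v))$ for the Monge patch of $h$, its contact element is $(u, v, h, h_u, h_v)$, and the correspondence~\eqref{eq:contact1} sends it to the contact element $(h_u, h_v, h_u u + h_v v - h, u, v)$ whose point part is precisely $F$ as recorded in~\eqref{eq:surfacefromsupp}. Hence $F = \delta(G)$, so I would read off the curvatures of $F$ from those of $G$ via the transformation law~\eqref{eq:dualcurv}.

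First I would record the curvatures of the graph $G$. Being a Monge patch, it falls directly under~\eqref{eq:graphhk}, giving
\begin{equation*}
  K_G = h_{uu} h_{vv} - h_{uv}^2, \qquad H_G = \tfrac{1}{2}(h_{uu} + h_{vv}).
\end{equation*}
Next I would transport these through $\delta$. According to~\eqref{eq:dualcurv} the Gaussian curvature passes to its reciprocal and the mean curvature to $H/K$, whence
\begin{equation*}
  K = \frac{1}{K_G} = \frac{1}{h_{uu} h_{vv} - h_{uv}^2}, \qquad H = \frac{H_G}{K_G} = \frac{h_{uu} + h_{vv}}{2\,(h_{uu} h_{vv} - h_{uv}^2)}.
\end{equation*}
This already delivers $K$ in the stated form and produces the mean curvature, the factor $\tfrac12$ being inherited from $H_G$.

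The conceptual content is entirely in the identification $F = \delta(G)$ together with~\eqref{eq:dualcurv}; everything else is substitution. The hard part will be nothing more than keeping track of the scalar normalization in the mean curvature, since the $\tfrac12$ in $H_G = \tfrac12(h_{uu} + h_{vv})$ survives the division by $K_G$ and is easy to drop. Before finalizing I would confirm the constant on the paraboloid $h = \tfrac12(\alpha u^2 + \beta v^2)$: here~\eqref{eq:surfacefromsupp} yields the paraboloid $z = \tfrac12(\tfrac{1}{\alpha} x^2 + \tfrac{1}{\beta} y^2)$, and Example~\ref{ex:paraboloid} gives $K = \tfrac{1}{\alpha\beta}$ and $H = \tfrac12(\tfrac1\alpha + \tfrac1\beta)$, matching the formulas above and pinning down the normalization unambiguously.
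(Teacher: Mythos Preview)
Your approach is identical to the paper's: identify $F=\delta(G)$ for the graph $G(u,v)=(u,v,h)$, read off $K_G,H_G$ from~\eqref{eq:graphhk}, and push through the rule~\eqref{eq:dualcurv}. The paper phrases it in the reverse direction (starting from $F$, applying $\delta$ to land on the graph of $h$, then inverting), but the content is the same.

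One substantive point: your careful bookkeeping of the $\tfrac12$ in $H_G=\tfrac12(h_{uu}+h_{vv})$ produces
\[
  H=\frac{h_{uu}+h_{vv}}{2\,(h_{uu}h_{vv}-h_{uv}^2)},
\]
which disagrees with the stated formula by a factor of $2$. Your paraboloid check confirms your version: with $h=\tfrac12(\alpha u^2+\beta v^2)$ one gets $H=\tfrac12(\tfrac1\alpha+\tfrac1\beta)$, matching Example~\ref{ex:paraboloid}, whereas the formula as stated would give $\tfrac1\alpha+\tfrac1\beta$. The paper's proof silently writes $H^*=h_{uu}+h_{vv}$ in~\eqref{eq:support*}, dropping the $\tfrac12$ from~\eqref{eq:graphhk}; this is the source of the discrepancy. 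So your argument is correct and in fact catches a slip in the normalization of the stated $H$.
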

\begin{proof}
  From the point representation~\eqref{eq:surfacefromsupp} of a surface
  $F$ with given support function $h$ we get its contact element
  representation
  $E(u, v) = (h_u, h_v, h_u u + h_v v - h, u, v)$. By
  Equation~\eqref{eq:contact1} the contact element representation of
  $\delta(F)$ is $(u, v, h(u, v), h_u, h_v)$. Thus, curvatures $K^*$ and
  $H^*$ of $\delta(F)$ have the form (see Eqn.~\eqref{eq:graphhk})
  \begin{equation}
    \label{eq:support*}
    K^* = h_{uu} h_{vv} - h_{uv}^2,
    \qquad 
    H^* = h_{uu} + h_{vv}.
  \end{equation}
  The transformation rule~\eqref{eq:dualcurv} for curvatures under
  duality $\delta$ yields the formulae for $H = H^*/K^*$ and $K = 1/K^*$
  via support function $h$ as claimed.
\end{proof}

\begin{cor}
  Isotropic minimal surfaces correspond to support functions $h$ which are
  harmonic functions. The metric duals of minimal surfaces or constant
  Gaussian curvature surfaces have the same properties.
  The metric duals of linear Weingarten surfaces of type $H + c K = 0$ are
  constant mean curvature surfaces.
\end{cor}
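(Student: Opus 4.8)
The plan is to obtain all three assertions as direct consequences of Proposition~\ref{prop:curvature} and the duality transformation rule~\eqref{eq:dualcurv}, so that essentially no fresh computation is required. For the first claim I would start from the support-function formula $H = (h_{uu}+h_{vv})/(h_{uu}h_{vv}-h_{uv}^2)$, observing that its denominator equals $1/K$. Because the support-function (dual) representation is only available for surfaces of non-vanishing Gaussian curvature, this denominator never vanishes, and therefore $H=0$ is equivalent to $h_{uu}+h_{vv}=\Delta h=0$, i.e. to $h$ being harmonic. It is worth noting in passing that this dovetails with the earlier remark that the \emph{graph} $f$ of an isotropic minimal surface is harmonic: a minimal surface has both a harmonic graph and a harmonic support function.

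For the remaining two claims I would simply read off the transformation law~\eqref{eq:dualcurv}, under which $(K,H)$ becomes $(1/K, H/K)$ under $\delta$ and $(1/K,-H/K)$ under $\nu$. If $H\equiv 0$, then the dual mean curvature $\pm H/K$ vanishes identically, so the metric dual of a minimal surface is again minimal; if $K$ is constant, then $1/K$ is constant, so the metric dual of a constant-Gaussian-curvature surface has constant Gaussian curvature. For a linear Weingarten surface of type $H+cK=0$ I would rewrite the relation as $H/K=-c$ (using $K\neq0$); then~\eqref{eq:dualcurv} gives the dual mean curvatures $H/K\equiv -c$ for $\delta(F)$ and $-H/K\equiv c$ for $\nu(F)$, both constant, so the metric dual is a constant mean curvature surface.

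The argument is short, and the only point needing a word of care --- the mild ``main obstacle'' --- is the standing non-degeneracy hypothesis $K\neq0$. This is used twice: to guarantee the support-function representation exists in the first claim, and to divide by $K$ when rewriting the Weingarten condition. I would make explicit that this is precisely the regime in which the support function and the curvature ratios in~\eqref{eq:dualcurv} are meaningful, so that the hypothesis costs nothing beyond the assumptions already in force.
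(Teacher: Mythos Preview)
Your proposal is correct and matches the paper's approach: the corollary is stated immediately after Proposition~\ref{prop:curvature} without a separate proof, precisely because it follows by reading off the support-function formulas and the duality rule~\eqref{eq:dualcurv} exactly as you outline. Your explicit remark about the standing hypothesis $K\neq 0$ is a welcome clarification that the paper leaves implicit.
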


\paragraph{Curvatures of Minkowski sums and their duals}

Let $F(u, v) = (u, v, f(u, v))$ and $G(u, v) = (u, v, g(u, v))$ be two
admissible surfaces. We start with the point-based Minkowski sum
$F^t(u, v) = (u, v, f(u, v) + t g(u, v))$.
Its mean and Gaussian curvatures are 
\begin{align}
    \nonumber
    H(F^t)
    &=
    H(F) + t H(G), 
    \\
    \label{eq:Ksum}
    K(F^t)
    &= 
    K(F) + 2 t K(F, G) + t^2 K(G), 
\end{align}
where, in analogy to the mixed area, we define $K(F, G)$ to be the
\emph{mixed Gaussian curvature} which reads
\begin{equation}
  \label{eq:mixedK}
  K(F, G) := \frac{1}{2} (f_{uu} g_{vv} - 2 f_{uv} g_{uv} + f_{vv} g_{uu}). 
\end{equation}
The bilinear form $K(\cdot, \cdot)$ corresponds to the quadratic form
expressing $K$ and satisfies $K(F, F) = K(F)$. 
Note that if $G = S$ is the isotropic unit sphere then in the above
expressions for point-based offsets the terms simplify to $H(S) = K(S) =
1$ and $K(F, S) = H(F)$. 

For plane-based Minkowski sums, recall that the support function of $F^t$
is $h_F + t h_G$. Equations~\eqref{eq:support*} imply that Gaussian and mean
curvatures of the dual surfaces are
\begin{equation*}
  K^*(F^t) 
  = K^*(F) + t K^*(F, G) + t^2 K^*(G),
  \qquad
  H^*(F^t) = H^*(F) + t H^*(G),
\end{equation*}
where $K^*(F)$ and $H^*(F)$ measure the Gaussian and mean curvature of the
dual surface, i.e.,  $K^*(F) = K(\delta(F))$.
Consequently, the transformation rule~\eqref{eq:dualcurv} implies
\begin{equation*}
  K(F^t) = \frac{1}{K^*(F^t)},
  \qquad
  H(F^t) = \frac{H^*(F^t)}{K^*(F^t)}.
\end{equation*}

\paragraph{Gaussian curvature and isotropic area of duals}

The total isotropic Gaussian curvature equals the isotropic surface area of
its dual. In the limit of a shrinking domain they are related
via~\eqref{eq:karea}.

\begin{lem}
  Let $F(u, v) = (u, v, f(u, v))$ be an admissible surface. 
  The Gauss curvature $K$ and the isotropic surface area of $\delta(F)$
  are related via
  \begin{equation}
    \label{eq:karea}
    K(u, v) 
    = \lim_{r \to 0} \frac{1}{r^2} \area_{E_r(u, v)}(\delta(F)),
  \end{equation}
  where $\area$ measures the isotropic area (i.e., the Euclidean area of
  the top view) and where $E_r(u, v)$ is the square with side
  length $2 r$ around $(u, v)$.
  The same holds for the metric duality $\nu$.
\end{lem}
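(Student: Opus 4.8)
The plan is to reduce the statement to the top-view area distortion of the map sending a point of $F$ to its dual point on $\delta(F)$, and to identify that distortion with $K$ through Equation~\eqref{eq:graphhk}. This formalizes precisely the area-ratio description of relative curvature indicated in the caption of Figure~\ref{fig:wrong-isometry}.

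First I would write down the dual surface explicitly. By the contact-element correspondence~\eqref{eq:contact1}, $\delta(F)$ carries the point parametrization $G(u, v) = (f_u, f_v, f_u u + f_v v - f)$ over the same parameter domain, so its top view is $\tilde G(u, v) = (f_u, f_v)$ --- precisely the top view of the isotropic Gauss image of Definition~\ref{defn:gaussimage}. Since the isotropic area of a surface is by definition the Euclidean area of its top view, $\area_{E_r(u, v)}(\delta(F))$ is the (signed) Euclidean area of the image region traced out by $\tilde G$ as the parameters range over $E_r(u, v)$. The Jacobian of $\tilde G$ is the Hessian
\[
  D\tilde G =
  \begin{pmatrix} f_{uu} & f_{uv} \\ f_{uv} & f_{vv} \end{pmatrix}
  = \nabla^2 f,
\]
whose determinant equals $f_{uu} f_{vv} - f_{uv}^2 = K$ by~\eqref{eq:graphhk}. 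The change-of-variables formula therefore gives
\[
  \area_{E_r(u, v)}(\delta(F)) = \iint_{E_r(u, v)} K(u', v')\, du'\, dv',
\]
and, normalizing by the top-view area of the base square $E_r(u, v)$ and letting $r \to 0$, continuity of $K$ (via the mean-value theorem for integrals) yields the pointwise value $K(u, v)$, which is the asserted limit~\eqref{eq:karea}.

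The case of $\nu$ is identical: by~\eqref{eq:contact2} the surface $\nu(F)$ has top view $(f_v, -f_u)$, whose Jacobian $\svs{f_{uv} & f_{vv}\\ -f_{uu} & -f_{uv}}$ again has determinant $f_{uu} f_{vv} - f_{uv}^2 = K$. I expect the only genuinely delicate point to be the sign and orientation bookkeeping: the plain unsigned Euclidean area of the Gauss image equals $\iint \abs{\det \nabla^2 f} = \iint \abs{K}$, so one must interpret $\area$ as a \emph{signed} isotropic area, oriented by the parametrization, for the limit to reproduce $K$ rather than $\abs{K}$. With that convention the identity $\area_{E_r(u, v)}(\delta(F)) = \iint_{E_r(u, v)} K$ holds for every admissible $F$, and the limit then follows from continuity of $K$ alone.
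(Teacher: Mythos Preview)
Your argument is essentially identical to the paper's: both identify the top view of $\delta(F)$ as $(f_u,f_v)$, compute the area integral via the Jacobian $\det\nabla^2 f = K$, and pass to the limit by the mean value theorem for integrals. Your explicit remark about needing a \emph{signed} area convention to recover $K$ rather than $|K|$ is a welcome clarification that the paper's proof leaves implicit.
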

\begin{proof}
  Equation~\eqref{eq:contact1} implies that $\delta(F) = (f_u, f_v, f_u u
  + f_v v - f)$.
  The isotropic area of the $\delta(F)$ is therefore measured as
  \begin{equation*}
    \area_{E_r(u, v)}(\delta(F))
    =
    \int_{E_r(u, v)} 
    \det\Big(\begin{smallmatrix}
      f_{uu} & f_{uv} \\
      f_{vu} & f_{vv} 
    \end{smallmatrix}\Big)
    \dd u\, \dd v
  \end{equation*}
  By the mean value theorem for integrals there is $\xi_r \in E_r(u, v)$
  such that
  \begin{equation*}
    \lim_{r \to 0} \frac{1}{r^2} \area_{E_r(u, v)}(\delta(F))
    =
    \lim_{r \to 0} \frac{1}{r^2} r^2 (f_{uu}(\xi_r) f_{vv}(\xi_r) -
    f_{uv}^2(\xi_r))  
    =
    K(u, v),
  \end{equation*}
  which concludes the proof.
\end{proof}

In the following proof we will need the notion of the mixed area of a
planar domain.

\begin{defn}
  The mixed area for planar domains $U$ bounded by curves $f, g$ given by
  their support functions $h_f, h_g \colon I \to \R$ is given by 
  $\area_U(F, G) = \int_I (h_f h_g - \dot h_f \dot h_g) \dd t$.
\end{defn}

\begin{cor}
  \label{cor:mixed}
  For the mixed Gaussian curvature we have $K(F, G) \equiv 0$ if and only
  if $\area_U(\delta(F), \delta(G)) = 0$ for all open sub domains $U$.
  The same holds for the metric duality $\nu$.
\end{cor}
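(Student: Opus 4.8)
The plan is to translate both sides of the stated equivalence into the single statement that $\int_U K(F,G)\,\dd u\,\dd v$ vanishes for every open $U$, and then close with a standard localization argument. First I would record, directly from the proof of the preceding lemma, that the (signed) isotropic area of the dual surface over a domain $U$ is the integral of the Gaussian curvature,
\[
\area_U(\delta(F)) = \int_U \det\!\Big(\begin{smallmatrix} f_{uu} & f_{uv}\\ f_{uv} & f_{vv}\end{smallmatrix}\Big)\, \dd u\, \dd v = \int_U K(F)\, \dd u\, \dd v,
\]
because by \eqref{eq:contact1} the top view of $\delta(F) = (f_u, f_v, f_u u + f_v v - f)$ is the image of $U$ under the gradient map $(u,v)\mapsto(f_u,f_v)$, whose Jacobian determinant is exactly $K(F) = f_{uu}f_{vv} - f_{uv}^2$.

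Next I would identify the mixed area with $\int_U K(F,G)$. By \eqref{eq:contact1} the dual of the point-based sum surface $F^t = (u,v,f+tg)$ has top view $(u,v)\mapsto (f_u + t g_u, f_v + t g_v)$, which is the planar Minkowski sum of the top views of $\delta(F)$ and $t\,\delta(G)$. Since the mixed area defined via the support-function integral is precisely the bilinear cross term in the expansion of the ordinary planar area of a Minkowski sum, comparing
\[
\area_U(\delta(F^t)) = \area_U(\delta(F)) + 2t\,\area_U(\delta(F),\delta(G)) + t^2\,\area_U(\delta(G))
\]
with the integral of the curvature expansion \eqref{eq:Ksum}, namely $\int_U K(F^t) = \int_U K(F) + 2t\int_U K(F,G) + t^2\int_U K(G)$, and matching the coefficient of $t$ yields the key identity (with the normalization of the mixed area fixed as in the definition)
\[
\area_U(\delta(F),\delta(G)) = \int_U K(F,G)\, \dd u\, \dd v.
\]

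With this identity the corollary is immediate. If $K(F,G)\equiv 0$, every such integral vanishes, so all mixed areas vanish. Conversely, suppose $\area_U(\delta(F),\delta(G)) = \int_U K(F,G)\,\dd u\,\dd v = 0$ for every open subdomain $U$. As $f,g$ are smooth, $K(F,G)$ is continuous; were it nonzero at some point it would keep a fixed sign on a small ball $U$, forcing $\int_U K(F,G)\neq 0$, a contradiction. Hence $K(F,G)\equiv 0$. The $\nu$ case is handled verbatim: by \eqref{eq:contact2} the top view of $\nu(F)$ is $(u,v)\mapsto(f_v,-f_u)$, whose Jacobian determinant is again $f_{uu}f_{vv}-f_{uv}^2 = K(F)$, so $\area_U(\nu(F)) = \int_U K(F)$ and the same chain of equalities and localization argument apply.

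The step I expect to require the most care is the identification $\area_U(\delta(F),\delta(G)) = \int_U K(F,G)\,\dd u\,\dd v$. It rests on recognizing that metric duality turns point-based sum surfaces into planar Minkowski sums of the top views, and that the support-function mixed area is exactly the polar bilinear form of the planar area functional; the integral representation then circumvents any convexity issue, since it makes sense even when the Gauss-image region $\nabla f(U)$ is not convex and its boundary has no single-valued support function.
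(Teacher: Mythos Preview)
Your proof is correct and follows essentially the same route as the paper: both arguments expand $K(F^t)$ and $\area_U(\delta(F^t))$ as quadratics in $t$ and match the linear coefficients to obtain $\area_U(\delta(F),\delta(G))=\int_U K(F,G)\,\dd u\,\dd v$, then conclude by localization. The paper organizes the localization via the pointwise limit formula~\eqref{eq:karea} over shrinking squares and the support-function definition of mixed area, whereas you establish the integral identity directly and localize afterwards by continuity; these are minor reorderings of the same idea.
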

\begin{proof}
  For all $(u, v)$ in the parameter domain we have
  \begin{align*}
    K(F(u, v), G(u, v)) = 0
    \overset{\eqref{eq:Ksum}}&{\Leftrightarrow}
    \frac{\partial}{\partial t} K(F^t(u, v))\Big|_{t = 0} = 0
    \\
    \overset{\eqref{eq:karea}}&{\Leftrightarrow}
    \frac{\partial}{\partial t} \lim_{r \to 0} 
    \frac{1}{r^2} \area_{E_r(u, v)}(\delta(F^t)) \Big|_{t = 0} 
    = 0.
  \end{align*}
  Since $\area(F^t) = \area(F) + 2 t \area(F, G) + t^2 \area(G)$ the
  previous line is further equivalent to
  \begin{align*}
    \lim_{r \to 0} \frac{1}{r^2} \area_{E_r(u, v)}(F, G) = 0
    &\Leftrightarrow
    \lim_{r \to 0} \frac{1}{r^2} \int_{E_r(u, v)} 
    (h_f h_g - \dot h_f \dot h_g) \dd t = 0
    \\
    &\Leftrightarrow
    h_f(u, v) h_g(u, v) - \dot h_f(u, v) \dot h_g(u, v) = 0,
  \end{align*}
  which is further equivalent to $\area_U(\delta(F), \delta(G)) = 0$ for
  any subset $U$ of the parameter domain.
\end{proof}

\section{Isometric Surfaces in $I^3$} 
\label{sec:isodef}

Since isotropic 3-space $I^3$ is based on the degenerate metric 
$ds^2 = dx^2 + dy^2$ any two surfaces are locally isometric to each other
if we consider isometries only in that metric sense
(see Figure~\ref{fig:wrong-isometry}~(a)).
To obtain a more meaningful notion of isometric maps between surfaces in
$I^3$ we add as a necessary condition an isotropic version of Gauss'
Theorema Egregium (see Definition~\ref{defn:isometric}).

\begin{SCfigure}[2.9][t]
  \begin{overpic}[width=.26\textwidth]{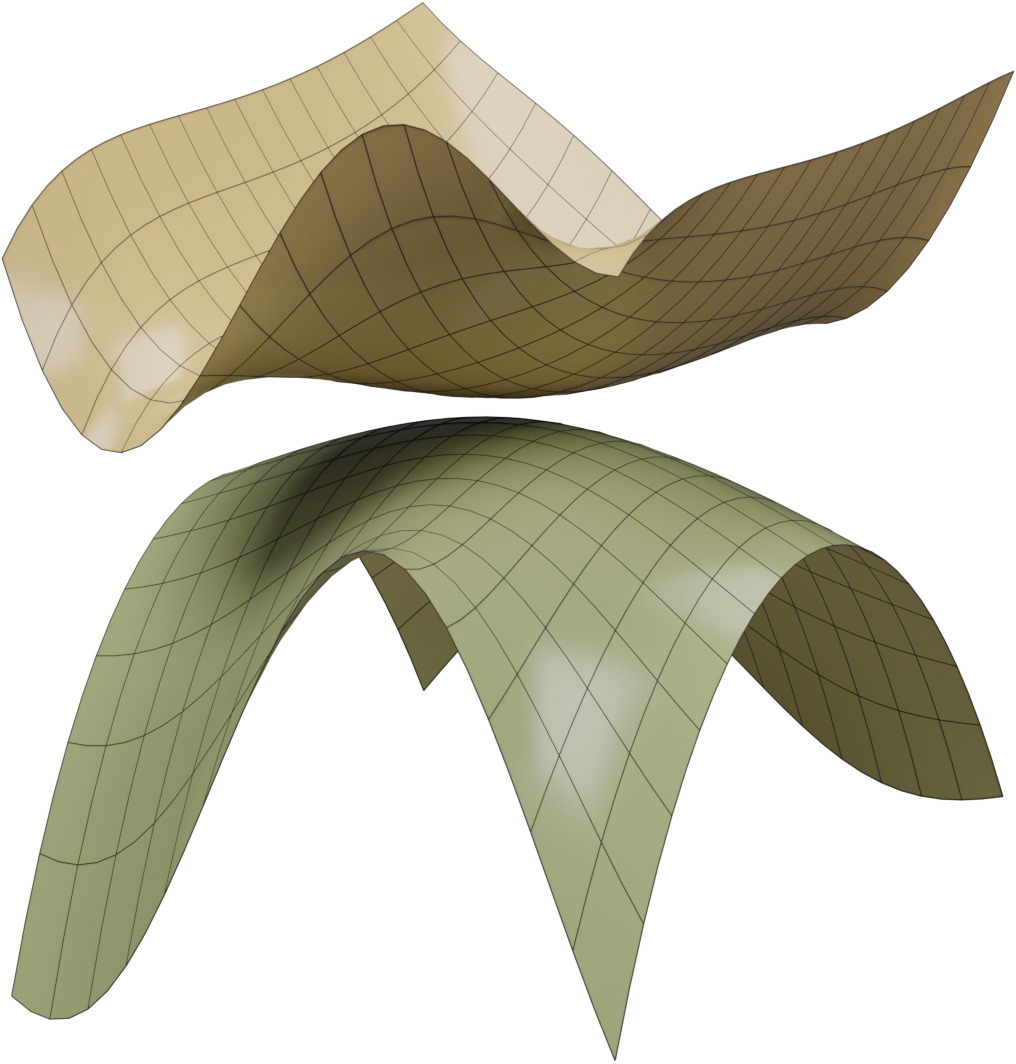}
  \end{overpic}
  \caption{
    Two isometric surfaces. The \emph{upper} surface is given by $(u, v, f
    - h)$ and the \emph{lower} surface is given by  $(u, v, f + h)$ where
    $f(u, v) = (u^2 - v^2 + \cos(1 + u) \cosh(1 + v) + \cosh v \sin u)/10$
    and $h(u, v) = (u^2 + v^2)/6$.
    }
  \label{fig:isometry-from-inf-iso}
\end{SCfigure}

\subsection{Isometric surfaces in isotropic 3-space} 

Since the metric on a surface in $I^3$ is the Euclidean metric in the top
view, an isometric mapping between two admissible surfaces $F, \bar F$
must appear as a Euclidean congruence transformation in the top view.  
We can therefore assume that two isometric surfaces $F$ and $\bar F$ are
arranged in such a way that points which correspond in the isometric
mapping have the same top view. Then the isometric mapping appears 
in the top view as identity. To obtain a sensible definition of a
non-trivial isotropic isometric maping between surfaces we add the natural
condition of equal Gaussian curvature.
\begin{defn}
  \label{defn:isometric}
  Two surfaces $F, \bar F$ with the same top view $D \subset \R^2$
  are called \emph{isometric in $I^3$}, if they have the same Gaussian
  (relative) curvature $K$ at corresponding points, i.e., at points with
  the same top view. 
\end{defn}

Expressing the surfaces as graphs of functions $F(u, v) = (u, v, f(u, v))$
and $\bar F(u, v) = (u, v, \bar f(u, v))$, equality of the isotropic
Gaussian curvature is equivalent to
\begin{equation*}
  f_{uu} f_{vv} - f_{uv}^2 = \bar f_{uu} \bar f_{vv} - \bar f_{uv}^2.
\end{equation*}

The isotropic Gaussian curvature equals the limit of the ratio between the
isotropic area of the Gaussian image (defined by parallel tangent planes)
and the isotropic surface area as the diameter of the domain goes to
zero~\cite{Sachs:1990}. See Figure~\ref{fig:wrong-isometry} (b-c) for an
illustration. Therefore, since the isotropic area stays constant under an
isometric map (or isometric deformation of the surface) also the isotropic
area of its Gauss image must stay constant. Consequently, an isometric map
of a surface induces an \emph{isotropic area preserving transformation of
the Gauss image} on the isotropic unit sphere (measured in the top view).

\paragraph{Metric duals of isometric surfaces}

Let us consider a pair of isometric surfaces $F, \bar F$ with
corresponding points being parallel points. Consequently, under metric
duality $\delta$ or $\nu$, this pair of surfaces is mapped to a pair of
surfaces $F^*, \bar F^*$ with parallel tangent planes at corresponding
points. 

Equation~\eqref{eq:contact1} implies that the metric dual of 
$F = (u, v, f(u, v))$ under $\delta$ reads $\delta(F) = (f_u, f_v, *)$
which yields a top view of the form $(f_u, f_v)$.
The isotropic Gauss image of $F$ is $\sigma(F) = (f_u, f_v, (f_u^2 +
f_v^2)/2)$ (see Def.~\ref{defn:gaussimage}). Therefore, the metric
dual $\delta(F)$ and its Gauss image $\sigma(F)$ have the same top view
$(f_u, f_v)$. The same holds for $\bar F$ whose Gauss image has the same
top view as $\delta(\bar F)$.
For $\nu$, we obtain the analogous result but with the top view of
the Gauss image $\sigma(F)$ and $\sigma(\bar F)$ related to the top view
of the duals $\nu(F)$ and $\nu(\bar F)$ by a rotation about $\pi/2$. 

\begin{prop}
  \label{prop:isometricdual}
  Let $(F, \bar F)$ be a pair of isometric surfaces (with corresponding
  points being parallel points). 
  Both pairs of metric duals $(\delta(F), \delta(\bar F))$ and 
  $(\nu(F), \nu(\bar F))$ have parallel tangent planes and equal Gaussian
  curvatures at corresponding points. The correspondences between
  $\delta(F)$ and $\delta(\bar F)$ as well as between $\nu(F)$ and
  $\nu(\bar F)$ are area preserving maps. 
\end{prop}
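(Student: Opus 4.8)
The plan is to reduce all three assertions to the single identity $K(F)=K(\bar F)$ furnished by Definition~\ref{defn:isometric}. Writing $F=(u,v,f)$ and $\bar F=(u,v,\bar f)$, isometry means by~\eqref{eq:graphhk} that $f_{uu}f_{vv}-f_{uv}^2=\bar f_{uu}\bar f_{vv}-\bar f_{uv}^2$ at every parameter $(u,v)$, and throughout I take the correspondence between the duals to be the parameter correspondence $(u,v)\mapsto(u,v)$ inherited from the parallel-points correspondence of $F$ and $\bar F$.

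For the parallel tangent planes I would simply read off the slope pair (the last two entries, i.e.\ the coordinates $p,q$ of the normal $(p,q,-1)$) of the dual contact elements. By~\eqref{eq:contact1} the contact element of $\delta(F)$ is $(f_u,f_v,f_uu+f_vv-f,u,v)$, whose slope pair $(u,v)$ is independent of $f$; hence $\delta(F)$ and $\delta(\bar F)$ both carry the tangent-plane normal $(u,v,-1)$ at parameter $(u,v)$ and therefore have parallel tangent planes. This simultaneously justifies identifying the parameter correspondence with the correspondence by parallel tangent planes. The same reading of~\eqref{eq:contact2} gives $\nu(F)$ the slope pair $(-v,u)$, again independent of $f$, so $\nu(F)$ and $\nu(\bar F)$ share the normal $(-v,u,-1)$ and likewise have parallel tangent planes.

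Equality of the Gaussian curvatures is then immediate from the duality rule~\eqref{eq:dualcurv}, which sends $K\mapsto 1/K$ under both $\delta$ and $\nu$: thus $K(\delta(F))=1/K(F)=1/K(\bar F)=K(\delta(\bar F))$ at corresponding parameters, and identically for $\nu$. (This step uses $K\neq0$, which is precisely the admissibility hypothesis under which the dual representation exists.) For the area-preserving claim I would invoke~\eqref{eq:karea}: parametrizing $\delta(F)$ by $(u,v)$ its top view is $(f_u,f_v)$, so its isotropic area element is $\det(\nabla^2 f)\,\dd u\,\dd v=(f_{uu}f_{vv}-f_{uv}^2)\,\dd u\,\dd v=K(F)\,\dd u\,\dd v$; for $\nu(F)$ the top view is $(f_v,-f_u)$, whose Jacobian is again $f_{uu}f_{vv}-f_{uv}^2=K(F)$. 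The analogous computation for the barred surface yields area element $K(\bar F)\,\dd u\,\dd v$. Since the map is the identity in $(u,v)$ and $K(F)=K(\bar F)$ pointwise, the pulled-back isotropic area forms coincide, so the areas of every subdomain agree and the correspondence is \emph{area preserving} (equivalently, the composite map has Jacobian determinant $K(\bar F)/K(F)=1$).

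The computations are all one line; the only point needing care---and the nearest thing to an obstacle---is the bookkeeping of which correspondence is meant. One must confirm that the ``corresponding points'' of the duals are genuinely the same-parameter points, which is legitimate exactly because the parallel-tangent-plane property established in the first step pins down a common parameter on the two dual surfaces. Once that is fixed, each of the three conclusions follows from a single transformation rule proved earlier in Section~\ref{sec:basics}.
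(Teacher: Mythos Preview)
Your proof is correct and follows essentially the same route as the paper: parallel tangent planes via the contact-element formulas~\eqref{eq:contact1}--\eqref{eq:contact2}, equal Gaussian curvature via the duality rule~\eqref{eq:dualcurv}, and area preservation via the Jacobian of the top-view map $(u,v)\mapsto(f_u,f_v)$ being $K$. The only cosmetic slip is that the area element should carry an absolute value, $|K(F)|\,\dd u\,\dd v$, but since $K(F)=K(\bar F)$ pointwise this does not affect the conclusion.
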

\begin{proof}
  The property of parallel tangent planes at corresponding points follows
  from their preimages being parallel points and the preservation of
  ``parallelity'' under metric duality.

  Since $F$ and $\bar F$ are isometric they have the same Gaussian
  curvature $K$ (Def.~\ref{defn:isometric}). The transformation
  rule~\eqref{eq:dualcurv} for curvatures implies that $\delta(F)$ and
  $\delta(\bar F)$ have equal Gaussian curvature $1/K$. The same holds for
  $\nu(F)$ and $\nu(\bar F)$.

  Since the top view of $\delta(F(u, v))$ is 
  $\varphi(u, v) := (f_u(u, v), f_v(u, v))$, the area of the top view
  of $\delta(F)$ is 
  \begin{equation*}
    \textstyle
    \iint 
    \big|\det\Big(
    \begin{smallmatrix}
      \la \varphi_u, \varphi_u \ra
      &
      \la \varphi_u, \varphi_v \ra
      \\
      \la \varphi_u, \varphi_v \ra
      &
      \la \varphi_v, \varphi_v \ra
    \end{smallmatrix}
    \Big)\big|^{1/2}
    \dd u\, \dd v
    =
    \iint |f_{uu} f_{vv} - f_{uv}^2| \dd u\, \dd v
    =
    \iint |K| \dd u\, \dd v,
  \end{equation*}
  which is the same as the area of the top view of $\delta(\bar F)$ since
  its Gaussian curvature is also $K$. The map between $\delta(F)$ and
  $\delta(\bar F)$ is therefore area preserving. The same holds for
  $\nu(F)$ and $\nu(\bar F)$.
\end{proof}

\subsection{The paratactic map} 
\label{ssec:paratactic}

Strubecker~\cite{strubecker4,Sachs:1990} studied an elegant correspondence
between surfaces in $I^3$ and area preserving maps in the plane, known as
paratactic map. 
\begin{defn}
  The \emph{paratactic map} takes contact elements $E = (x, y, z, p, q)$ to
  a pair of points $(E_l, E_r)$ in $\R^2$ via
  \begin{equation*}
    (x, y, z, p, q) 
    \longmapsto 
    \begin{array}{l}
      E_l = (x_l, y_l) = (x + q, y - p),
      \\
      E_r = (x_r, y_r) = (x - q, y + p),
    \end{array}
  \end{equation*}
  the so called \emph{left} and \emph{right image points} $E_l$ and $E_r$.
\end{defn}

Left and right image point $E_l, E_r$ arise through Clifford translations
in $I^3$, a limit case of the well known Clifford translations in elliptic
3-space. Right and left translation of the plane of the contact element
$E$ into the plane $z = 0$ maps the contact point $(x, y, z)$ to the left
and the right image point, respectively. This is another reason for
studying the paratactic map in $I^3$. 

Note that the paratactic map is not bijective. Its inverse is only
determined up to translation in isotropic direction: Suppose we are given
a pair $E_l, E_r$ in $\R^2$ and want to recover a contact element $E = (x,
y, z, p, q)$ as preimage of $E_l$ and $E_r$. Obviously, we must have
$(x, y) = (E_l + E_r)/2$ and $(q, -p) = (E_l - E_r)/2$. However, $z$ is
not encoded in the pair $E_l, E_r$.

A remarkable theorem, going back to Scheffers~\cite{scheffers-1918},
states that \emph{the map $E_l \mapsto E_r$ is area preserving if and only
if the elements $E$ are (up to translation in isotropic direction) the
surface elements of some surface $A$ (which can degenerate in some special
cases)}. 

Given an area preserving map $E_l(u, v) \mapsto E_r(u, v)$, the
contact elements of the surface $A$ are
\begin{equation}
  \label{eq:para2} 
  (x, y, z, p, q) 
  = 
  \big(\frac{x_l + x_r}{2}, \frac{y_l + y_r}{2}, z(u, v), 
       \frac{y_r - y_l}{2}, \frac{x_l - x_r}{2}\big), 
\end{equation}
where $z(u, v)$ is found by integrating the system~\eqref{eq:system}. This
system is integrable since its integrability condition
$p_v x_u + q_v y_u = p_u x_v + q_u y_v$
(Eqn.~\eqref{eq:integrability}) amounts to 
\begin{equation*}
  \partial_u x_l\,
  \partial_v y_l
  -
  \partial_v x_l\,
  \partial_u y_l
  =
  \partial_u x_r\,
  \partial_v y_r
  -
  \partial_v x_r\,
  \partial_u y_r,
\end{equation*}
which is the condition for the area preservation of the map
$E_l(u, v) \mapsto E_r(u, v)$.

Let us now take two isometric surfaces $F, \bar F$.
Proposition~\ref{prop:isometricdual} implies that the top views of
their dual surfaces $\nu(F), \nu(\bar F)$ are related by an area
preserving map. This area preserving map is given by 
$E_l = (f_y, -f_x) \mapsto E_r = (\bar f_y, -\bar f_x)$ which is the paratactic image
of 
\begin{equation*}
  (x, y, z, p, q)
  =
  \frac{1}{2} 
  (f_v + \bar f_v, -f_u - \bar f_u, \ \cdot\ , f_u - \bar f_u, f_v - \bar f_v). 
\end{equation*}

Hence, the corresponding surface $A$ has tangent planes which are parallel
to the scaled difference surface $\Phi(u, v) := (u, v, (f - \bar f)/2)$.
In Sections~\ref{sec:inf} and \ref{sec:darboux}, we will see that the two
surfaces $V$ and $A$ are relative minimal surfaces to each other.
Furthermore, they appear in a sequence of surfaces which form an isotropic
counterpart to Sauer's diagrams and the so called Darboux wreath
associated with infinitesimally flexible surfaces~\cite{sauer:1970}.

\subsection{Isotropic counterparts to well-known isometries in Euclidean
3-space} 
\label{ssec:simpleexamples}

To support the choice of our definition for isotropic isometries and to
illustrate the introduced concepts, we provide here three examples for
isotropic counterparts to well-known Euclidean results: the associated
family of minimal surfaces, Bour's theorem, and Minding isometries of
ruled surfaces.

\paragraph{Associated family of minimal surfaces}

An isotropic minimal surface is the graph $F(u, v) = (u, v, f(u, v)$ of a
harmonic function $f$. These surfaces have been studied by
Strubecker~\cite{strubecker3,strubecker4}, who also provided the isotropic
counterpart to associated families of minimal surfaces. Starting from a
complex differentiable function $f(u + i v) = x(u, v) + i y(u, v)$, the
real harmonic functions $x(u, v)$ and $y(u, v)$ define the associated
family 
\begin{equation*}
  f^t(u, v) = x(u, v) \cos t + y(u, v) \sin t.
\end{equation*}
This is a continuous isometric deformation within our definition, since
Gaussian curvature $K = -(x_{uu}^2 + y_{uv}^2)$ is preserved, as pointed
out by Strubecker~\cite{strubecker3,strubecker4}. See
Figure~\ref{fig:assoc-family} for an illustration of an associated family
of isotropic minimal surfaces with equations
$f^t(u, v) = (\frac{1}{2} \sin u \cosh v + 10) \cos t + \frac{1}{2} \cos u
\sinh v \sin t$.

\begin{figure}[t]
  \begin{overpic}[width=\textwidth]{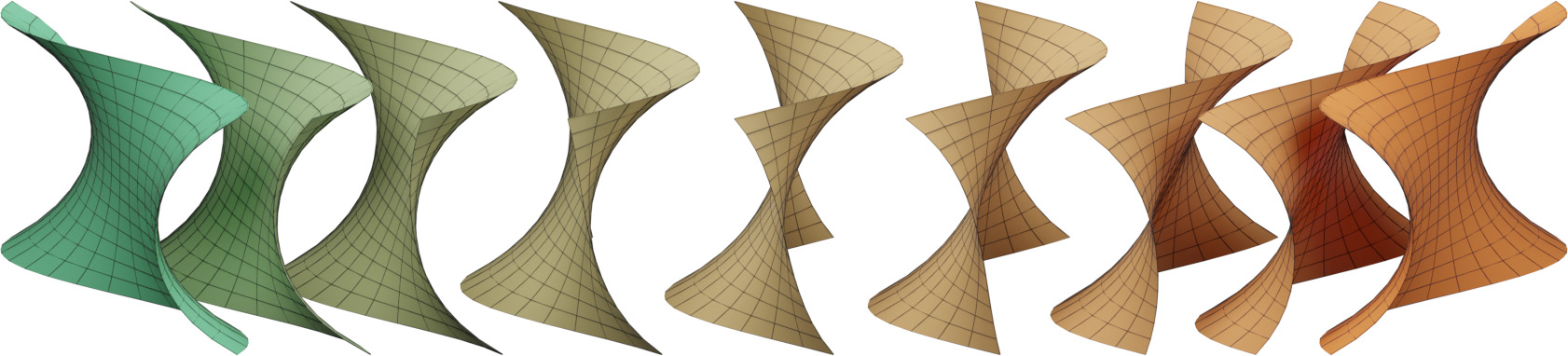}
  \end{overpic}
  \caption{Associated family of isotropic minimal surfaces. For the sake
  of ``paper-space-economy'', the isotropic direction in this image is
  horizontal instead of the usual vertical direction.}
  \label{fig:assoc-family}
\end{figure}

\paragraph{Bour's theorem}

According to E.~Bour, each rotational surface in $E^3$ is isometric to a
one-parameter family of rotational surfaces and a two-parameter family of
helical surfaces (see, e.g.,~\cite{sauer:1970}). We show the analogous
result in $I^3$.

There are ``more'' helical surfaces in $I^3$ than in $E^3$. For example a
shearing of a Euclidean helical surface in $z$-direction results in a
surface that is not a helical surface in $E^3$ any more but still in $I^3$
as it is a congruence transformation in $I^3$. However, every helical
surface in $I^3$ is isotropic isometric to a Euclidean helical surface.

Let $G \subset I^3$ be a Euclidean helical surface parametrized in the form
\begin{equation*}
  g(u, v) = (v \cos u, v \sin u, f(v) + h u).
\end{equation*}
The $v$ parameter curves ($u = {\rm const}$) are isotropic geodesics in
planes through the helical axis (i.e., $z$-axis). The $u$ parameter curves
($v = {\rm const}$) are helices and also called \emph{path curves}.
Rotational surfaces $G$ are also represented in this way with $h = 0$.
  
Bour's isometries map path curves $v = {\rm const}$ to path curves
and profiles to profiles. Thus, it is sufficient to fix the top view and
look for other helical surfaces $\bar G$ with profiles $\bar f(v)$ and
parameter $\bar h$ so that the Gaussian curvature of $\bar G$ and $G$
agree at $(u, v)$.  Inserting into Equation~\eqref{eq:general-K}, we find
\begin{equation*} 
  K = \frac{f' f'' v^3 - h^2}{v^2},
\end{equation*} 
so that equality of the Gaussian curvature $K(G) = K(\bar G)$ for two such
surfaces $G, \bar G$ requires
\begin{equation*} 
  f' f'' - \bar f' \bar f'' = \frac{h^2 - \bar h^2}{v^3}.
\end{equation*} 
Integrating yields 
\begin{equation*} 
  (f')^2 - (\bar f')^2 = \frac{\bar h^2 - h^2}{v^2} - c,
\end{equation*} 
for some integration constant $c \in \R$. Consequently, we find a surface
$\bar G$ isometric to a rotational surface $G$ (which corresponds to $h =
0$) by integration:
\begin{equation*} 
  \bar f 
  = 
  \int \varepsilon(v) \sqrt{(f')^2 - \frac{\bar h^2}{v^2} + c}\, \dd v, 
\end{equation*} 
where $\varepsilon(v) \in \{\pm 1\}$ decides the sign of the square root
but it must be chosen such that $\bar f$ is twice differentiable. Thus,
we have a family of isometric helical surfaces depending on the two
parameters $c$ and $\bar h$, which includes with $\bar h = 0$ a
one-parameter family of isometric rotational surfaces. To obtain real
solutions, $h$ and $c$ must be chosen such that the square root is real. 

\begin{ex}
  A rotational surface together with two isotropic isometric helical
  surfaces are illustrated in Figure~\ref{fig:bour-family}
  with $f(v) = -\sin v$.
  The first helical surface (Fig.~\ref{fig:bour-family} center) is given
  by
  $\bar h = 1$,
  $c_1 \approx 0.045124$,     
  and 
  $\varepsilon_1(v) = \big\{\!\!%
  \def\arraystretch{.65}%
  \begin{array}{ll}
    \scriptstyle
    \sgn(\cos v) 
    & 
    \scriptstyle
    v < 2 \pi
    \\
    \scriptstyle
    1 
    & 
    \scriptstyle
    \text{else}
  \end{array}$.
  The second helical surface (Fig.~\ref{fig:bour-family} right) is given by
  $\bar h = 1$,
  $c_2 = 0.05$,
  and 
  $\varepsilon_2(v) = 1$.
  The second one with constant $\varepsilon$ has a monotonically
  increasing meridian curve.
\end{ex}

In $I^3$ there are further nontrivial one-parameter sub-groups of
congruence transformations, generating surfaces of the form
(see~\cite{Sachs:1990}),
\begin{equation}
  \label{eq:parabolic-rot-surf}
  g(u, v) = a u^2 + b u v + f(v).
\end{equation}
These are \emph{parabolic rotational surfaces} for $a \ne 0$ and Clifford
cylinders for $a = 0, b \ne 0$. They have Gaussian curvature 
$K = g_{uu} g_{vv} - g_{uv}^2 = 2 a f'' - b^2$. Thus isometric surfaces
are characterized by $2 a f'' - b^2 = 2 \bar a \bar f''- \bar b^2$.
Consequently, to any parabolic rotational surface $G$ as
in~\eqref{eq:parabolic-rot-surf}, an isometric surface $\bar G$ has
profile curves given by 
\begin{equation*}
  \bar f(v)
  =
  \frac{a}{\bar a} f(v) + \frac{\bar b^2 - b^2}{4 \bar a} v^2 + c_1 v +
  c_2,
\end{equation*}
with integration constants $c_1, c_2 \in \R$.
These profile curves can be seen as isotropic offsets of scaled profile
curves on the original surface $G$. Parameters $c_1, c_2$ just apply an
isotropic congruence transformation, while the other two parameters
$\bar a \ne 0, \bar b$ are essential. 

Clifford cylinders have constant $K = - b^2$ independent of the profile
and thus any two of them are isometric after appropriate scaling in
$z$-direction.

\begin{ex}
  A pair of isometric parabolic rotational surfaces is depicted in 
  Figure~\ref{fig:parabolic-rot-family}, where 
  $f(v) = 2 \sin v + v^3/70$,
  $a = 1/10$,
  $b = 1/5$,
  $\bar a = 1/5$,
  $\bar b = 3/10$,
  $c_1 = c_2 = 0$.
\end{ex}

\begin{figure}[t]
  \begin{overpic}[width=\textwidth]{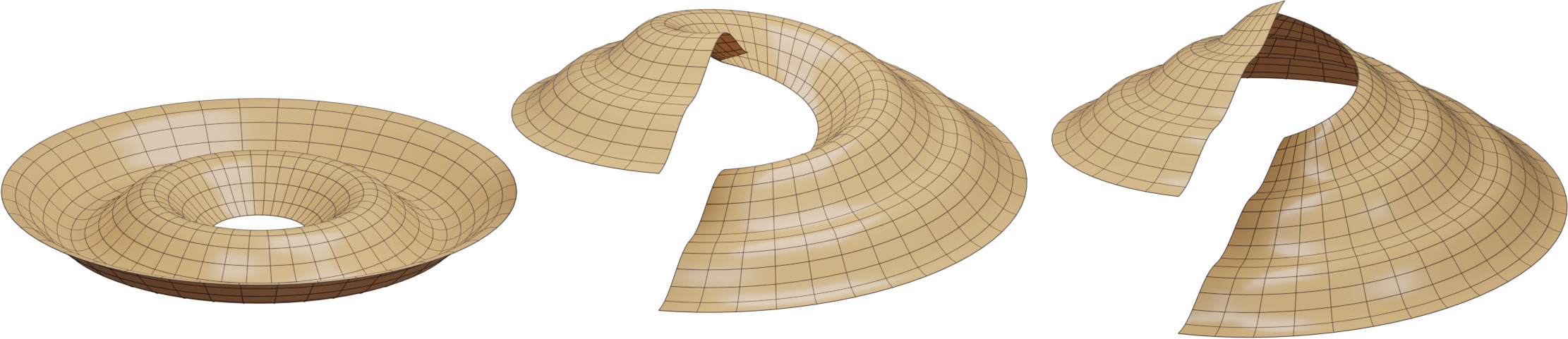}
  \end{overpic}
  \caption{A rotational surface (\emph{left}) together with two isotropic
  isometric helical surface. Note that the meridian curve of the helical
  surface on the \emph{right} is strict monotonically increasing
  (since $\varepsilon_2(v) = 1$). 
  The meridian curve of the helical surface in the \emph{center} is not
  strict monotonically increasing (since $\varepsilon_1(v) \neq
  \text{const}$).}
  \label{fig:bour-family}
\end{figure}

\begin{SCfigure}[2][t]
  \begin{overpic}[width=.38\textwidth]{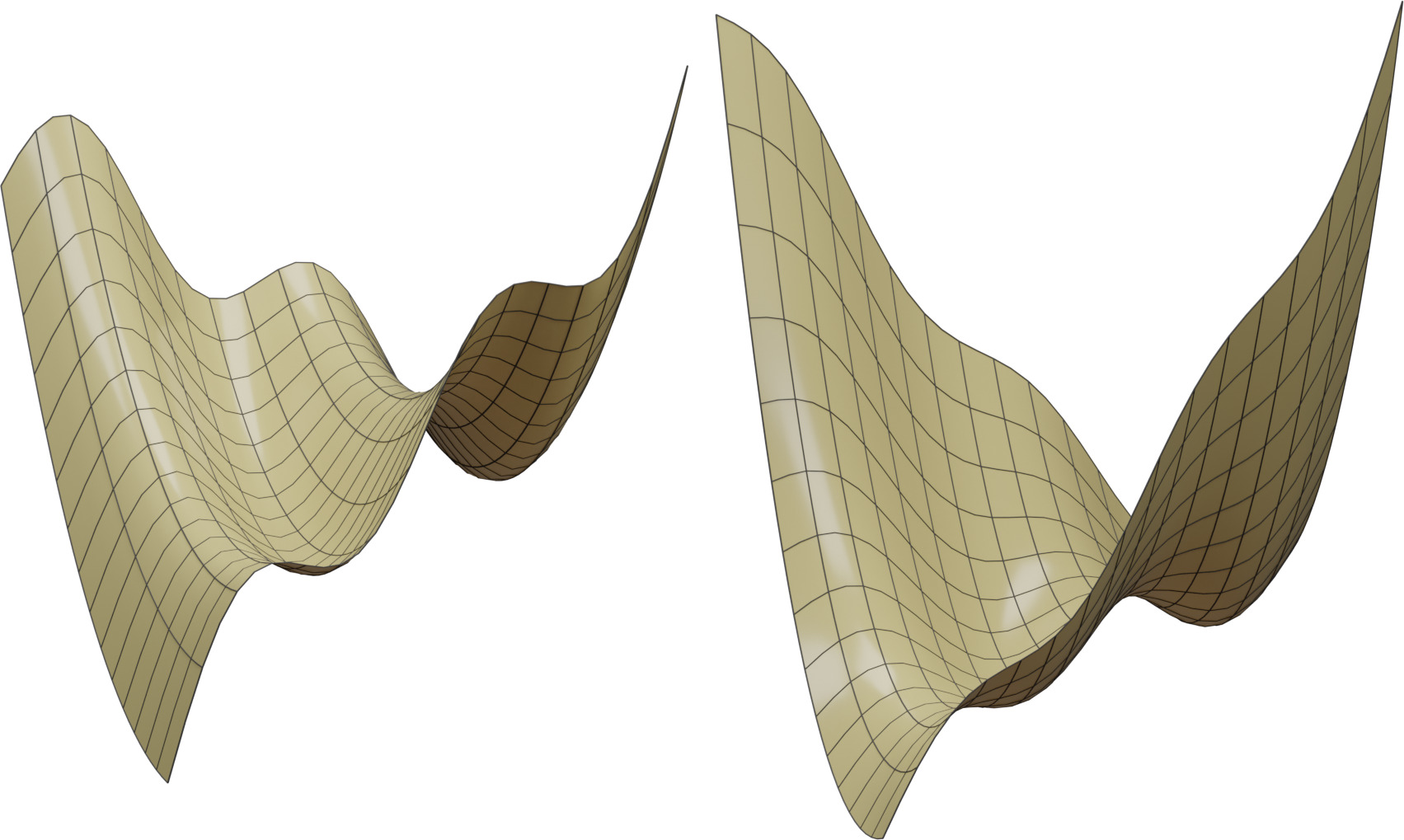}
  \end{overpic}
  \caption{A pair of isometric parabolic rotational surfaces.
  In $I^3$ there is a nontrivial one-parameter sub-group of congruence
  transformations called \emph{parabolic limit rotations}.}
  \label{fig:parabolic-rot-family}
\end{SCfigure}

\paragraph{Minding isometries of ruled surfaces}
\label{par:minding-iso}

Any sufficiently smooth ruled surface in $E^3$ can be embedded into a
continuous family of isometric ruled surfaces such that the rulings
correspond in the isometries. These ruled surfaces share striction and
curvature, but differ in their torsion (see, e.g.,~\cite{pottwall:2001}). 

\begin{defn}
  \label{defn:striction}
  The \emph{striction curve $s$} on a ruled surface in $I^3$ consists of
  those points where the tangent plane is isotropic (vertical).
  The \emph{striction} $\sigma$ is the isotropic angle measured between
  the tangent of the striction curve $s$ and the corresponding ruling and
  thus the difference between their slopes. 
\end{defn}

The striction $\sigma$ is implicitly defined by 
$\dot s/\|\dot{\tilde s}\| = e + \sigma e_3$, where $e$ is the
isotropically normalized ruling direction and where $e_3$ is the unit
vector in $z$-axis direction (see also Figure~\ref{fig:striction}).

In analogy to ruled surfaces in $E^3$, in isotropic space $I^3$ ruled
surfaces are uniquely determined by their curvature $\kappa$, their
torsion $\tau$ and striction $\sigma$~\cite[p.~198]{Sachs:1990} (up to
isotropic congruence transformations).

We are now interested in ruling preserving isometries of ruled surfaces in
$I^3$, not only because they constitute an easily accessible example, but
also due to their occurrence in our discussion of special types of
isometries in Section~\ref{sec:special}. 
We use results by Sachs~\cite{Sachs:1990} on ruled surfaces $F \subset I^3$,
which come in three types. 

For a ruled surface $F$ of \emph{type I}, the top views of the rulings are
tangents of a curve $\tilde s$ (see Fig.~\ref{fig:ruled-surface} left).
The corresponding curve $s \subset F$ is the \emph{striction curve}, along
which the tangent planes of $F$ are isotropic and envelope the isotropic
\emph{striction cylinder}. Then, at a point $p$ of a ruling $r$ which has
isotropic distance $w$ to the striction point $s \in r$, the Gaussian
curvature $K$ is computed as~\cite[p.~210]{Sachs:1990}
\begin{equation}
  \label{eq:lamarle}
  K = -\frac{\rho^2}{w^4}, 
  \qquad\text{with}\qquad
  \rho = \frac{\sigma}{\kappa},
\end{equation}
where $\rho$ is called \emph{pitch}. The same formula applies for ruled
surfaces of \emph{type II}, where the striction curve $s$ is an isotropic
line (see Fig.~\ref{fig:ruled-surface} top-right).
Note that the Gaussian curvature does not depend on the torsion $\tau \in
C^0$. Consequently, for fixed $\kappa$ and $\sigma$ and arbitrary $\tau$
the corresponding ruled surfaces are isometric to each other. We have
therefore an entire family of isometric ruled surfaces in which the
rulings correspond.

\begin{figure}[t]
  \begin{overpic}[width=.56\textwidth]{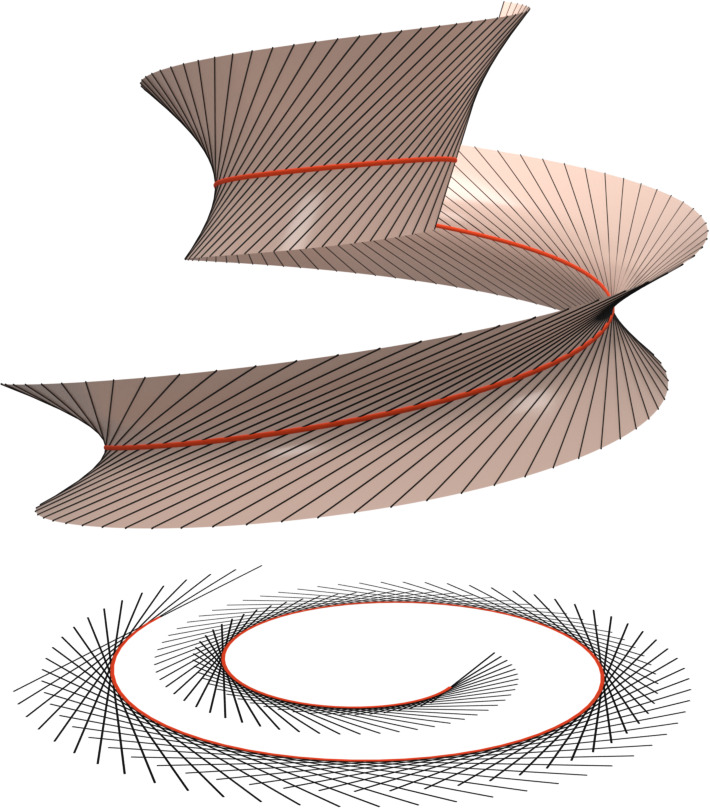}
    \put(70,14){$\tilde s$}
    \put(70,69){\contour{white}{$s$}}
  \end{overpic}
  \hfill
  \begin{minipage}[b]{.42\textwidth}
  \begin{overpic}[width=\textwidth]{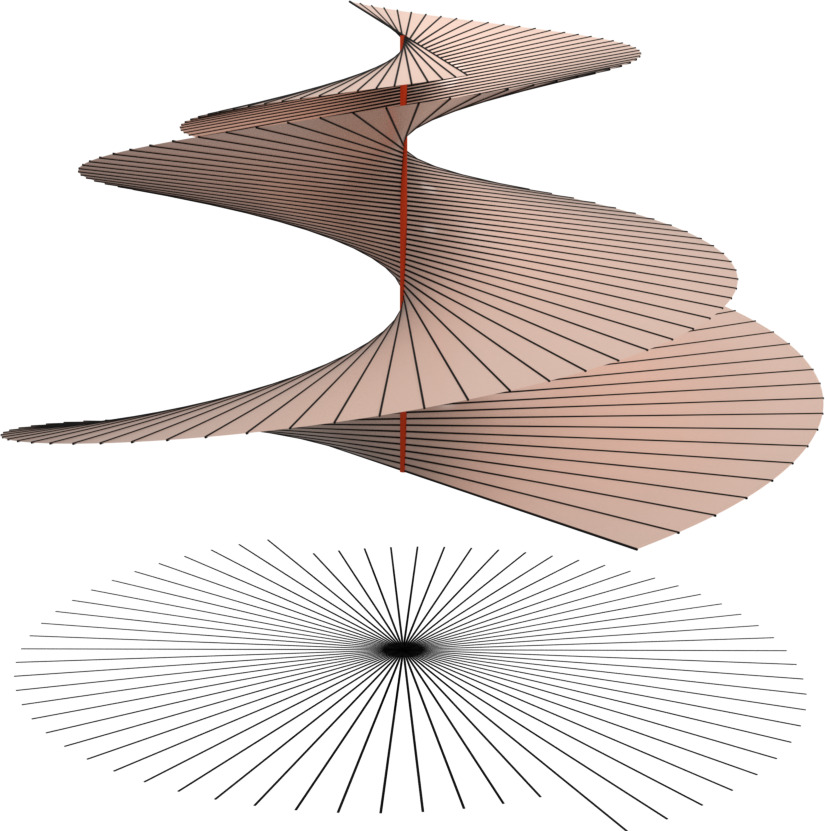}
  \end{overpic}
  \\

  \begin{overpic}[width=\textwidth]{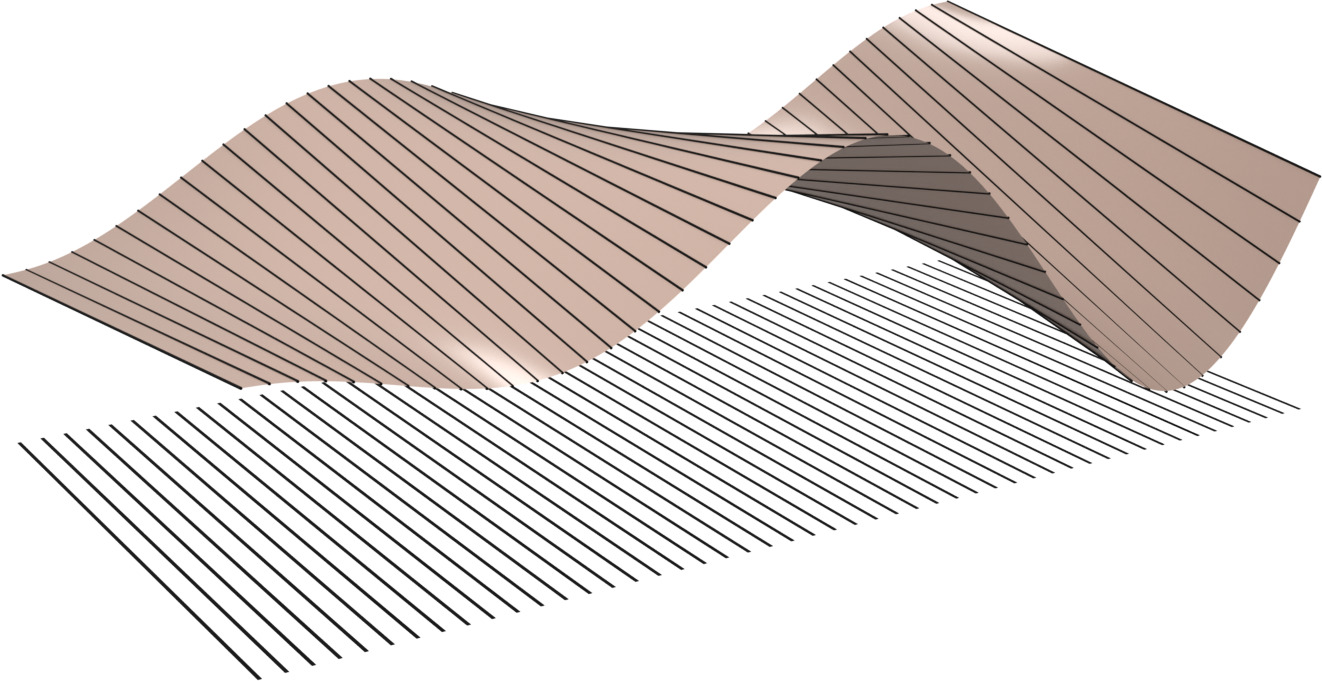}
  \end{overpic}
  \end{minipage}
  \caption{Isotropic ruled surfaces. \emph{Left:} Type I: The top views of
  the rulings are straight lines enveloping a curve $\tilde s$ -- the top
  view of the isotropic striction curve $s$ (orange). The tangent planes
  along the striction curve are isotropic planes. 
  \emph{Top-Right:} Type II: All rulings intersect an isotropic line which
  is also the striction curve. The top views of the rulings intersect in a
  common point. 
  \emph{Bottom-Right:} Type III: All rulings are parallel to a fixed
  isotropic plane. The top views of the rulings form a pencil of parallel
  lines.}
  \label{fig:ruled-surface}
\end{figure}

Minding isometries also exist for ruled surfaces of \emph{type III}, where all
rulings are parallel in the top view (see Fig.~\ref{fig:ruled-surface}
bottom-right). There, the Gaussian curvature is 
$K = -\rho^2$~\cite[p.~210]{Sachs:1990}. The pitch $\rho$
is implicitly given by $\dot e = \rho e_3$, where $e$ is again the
isotropically normalized ruling direction~\cite[p.~204]{Sachs:1990}.

\paragraph{The metric dual of Minding isometries}

The metric dual $F^* = \nu(F)$ of a ruled surface
$F$ is a ruled surface as any projective duality preserves rulings.
Along the striction curve the tangent planes are vertical and are thus
mapped by the duality to points at infinity. 
Thus, in the metric duality we have the following correspondences:
\begin{center}
\begin{tabular}{rcl}
  striction curve 
  &
  $\longleftrightarrow$ 
  &
  curve at infinity
  \\
  striction cylinder 
  &
  $\longleftrightarrow$ 
  &
  asymptotic developable.
\end{tabular}
\end{center}
The \emph{asymptotic developable} is tangent to the ruled surface along
its curve at infinity. 
Note that lines with the same top view correspond in $\nu$ to parallel
lines. The metric dual between two isometric ruled surfaces $F, \bar F$
are two ruled surfaces $F^*, \bar F^*$ with the following relation:
corresponding rulings are parallel, Gaussian curvatures at
corresponding points are equal and the map $F^* \mapsto \bar F^*$ is area
preserving. 
We will continue the discussion of isometric ruled surfaces in
Section~\ref{ssec:2positions}.

\section{Infinitesimal Isometries} 
\label{sec:inf}

A surface $G \colon U\subset \R^2 \to \R^3$ in Euclidean space is
\emph{infinitesimally flexible} if there exists an isometric deformation
of first order, i.e., if there exists a \emph{deformation vector field}
$V \colon U \to \R^3$ such that the family $G^t = G + t V$
preserves the first fundamental form $I^t$ of $G^t$ in the first order:
\begin{equation}
  \label{eq:euclinfflex}
  \frac{\dd}{\dd t} I^t\Big|_{t = 0} = 0.
\end{equation}
We will adapt this idea to give a sensible notion of infinitesimal
isotropic isometries.

\subsection{Infinitesimal deformation in $I^3$} 
\label{ssec:isodefo}

Let us consider a surface $F \colon U \subset \R^2 \to \R^3$ in isotropic
space $I^3$ and the deformation 
\begin{equation*}
  F^t = F + t V, 
\end{equation*}
with the \emph{deformation vector field} 
$V(u, v) = (n_1(u, v), n_2(u, v), n_3(u, v))$.
We are interested in the case where the deformation is
infinitesimally isometric of first order at $t = 0$.
In contrast to the Euclidean notion of infinitesimal flexibility (which is
just Equation~\eqref{eq:euclinfflex}), for infinitesimal flexibility in
$I^3$ we additionally require the preservation of the isotropic Gaussian
curvature in the first order which motivates the following definition.

\begin{defn}
  \label{defn:inffelxsmooth}
  An isotropic surface $F(u, v)$ is \emph{infinitesimally flexible} if
  there exists a non-trivial deformation vector field $V$ such that 
  \begin{equation}
    \label{eq:inf-isom}
    \frac{\dd}{\dd t} \tilde I^t\Big|_{t = 0} 
    = 
    \frac{\dd}{\dd t} 
    \small\arraycolsep=.3\arraycolsep\def\arraystretch{.9}\ensuremath{%
      \left(
      \begin{array}{ccc}
        \la \tilde F^t_u, \tilde F^t_u\ra
        &
        \la \tilde F^t_u, \tilde F^t_v\ra
        \\
        \la \tilde F^t_u, \tilde F^t_v\ra
        &
        \la \tilde F^t_v, \tilde F^t_v\ra
      \end{array}
      \right)}\Big|_{t = 0} 
      = 0
      \qquad\text{and}\qquad
      \frac{\dd}{\dd t} K(F^t)\Big|_{t = 0} = 0
  \end{equation}
  holds, where $\tilde F$ denotes the top view of $F$.
\end{defn}

For admissible surfaces in the representation of a graph $F(u, v) = (u, v,
f(u, v))$ we obtain the following expressions. For the metric part we need
that the top views are isometric of first order, i.e., $\tilde F^t$ must
satisfy the left equation of~\eqref{eq:inf-isom}:
\begin{align*}
  0 
  &=
  \frac{\dd}{\dd t} \la \tilde F^t_u, \tilde F^t_u\ra\Big|_{t = 0} 
  =
  2 \la \tilde F_u, \tilde V_u\ra 
  =
  2 \la \svs{1 \\ 0}, \svs{n_{1 u} \\ n_{2 u}}\ra 
  =
  2 n_{1 u}
  \\
  0
  &=
  \frac{\dd}{\dd t} \la \tilde F^t_v, \tilde F^t_v\ra\Big|_{t = 0} 
  =
  2 \la \tilde F_v, \tilde V_v\ra 
  =
  2 \la \svs{0 \\ 1}, \svs{n_{1 v} \\ n_{2 v}}\ra 
  =
  2 n_{2 v}
  \\
  0
  &=
  \frac{\dd}{\dd t} \la \tilde F^t_u, \tilde F^t_v\ra\Big|_{t = 0} 
  =
  \la \tilde F_u, \tilde V_v\ra + \la \tilde F_v, \tilde V_u\ra 
  =
  \la \svs{1 \\ 0}, \svs{n_{1 v} \\ n_{2 v}}\ra 
  +
  \la \svs{0 \\ 1}, \svs{n_{1 u} \\ n_{2 u}}\ra 
  =
  n_{1 v} + n_{2 u}.
\end{align*}
Consequently, 
\begin{equation}
  \label{eq:ivelocity-allg}
  V(u, v) = \sv{-k v + d_1\\\hphantom{-}k u + d_2\\\hphantom{-}n(u, v)},
\end{equation}
with $d_1, d_2, k \in \R$ and where we set $n(u, v) = n_3(u, v)$. 
Then, for the isotropic Gaussian curvature~\eqref{eq:general-K} of
$K(F^t)$ we have
\begin{equation*}
  K(F^t) =
  K(F) + t (f_{uu} n_{vv} - 2 f_{uv} n_{uv} + f_{vv} n_{uu}) + t^2 K(V).
\end{equation*}
Note that the coefficient of $t$ is the mixed Gaussian curvature $K(F, V)$
as in Equation~\eqref{eq:Ksum}.
Since any translation vector added to the velocity vector field from
Equation~\eqref{eq:ivelocity-allg} plays no role in the condition of
infinitesimal isometry we will w.l.o.g.\ only consider the simplified
version 
\begin{equation}
  \label{eq:ivelocity}
  V(u, v) = (-k v, k u, n(u, v)).
\end{equation}
Note that the top views of $F$ and $V$ are related by a rotation about
$\pi/2$. The velocity vector field $V$ will be called \emph{velocity
diagram} in Section~\ref{ssec:diagrams}. In this simplified setting where
we assume parametrization as graph $(u, v, f(u, v))$ the condition of
infinitesimal isometry boils down to 
\begin{equation} 
  \label{eq:inf} 
  K(F, V) = f_{uu} n_{vv} - 2 f_{uv} n_{uv} + f_{vv} n_{uu} = 0.
\end{equation}
This equation resembles the corresponding condition for infinitesimal
Euclidean isometries~\cite[p.~167]{sauer:1970}.
Now, Corollary~\ref{cor:mixed} implies the following proposition.
\begin{prop}
  \label{prop:flexmixed}
  An isotropic surface $F$ is infinitesimally flexible with deformation
  vector field $V$ as above if and only if 
  $\area(\delta(F), \delta(G)) \equiv 0$ or
  $\area(\nu(F), \nu(G)) \equiv 0$, where $\delta, \nu$ are the metric
  dualities.
\end{prop}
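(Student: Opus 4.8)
The plan is to reduce the assertion to a direct application of Corollary~\ref{cor:mixed}, since most of the work is already in place. For a graph surface $F(u,v)=(u,v,f(u,v))$, the metric part of Definition~\ref{defn:inffelxsmooth} forces the deformation field into the normal form~\eqref{eq:ivelocity}, and the first-order variation of the Gaussian curvature then shows that infinitesimal isometry is equivalent to Equation~\eqref{eq:inf},
\[
  K(F,V) = f_{uu} n_{vv} - 2 f_{uv} n_{uv} + f_{vv} n_{uu} = 0,
\]
where $n=n_3$ is the vertical component of $V$. Thus, for a field $V$ of the admissible form, $F$ is infinitesimally flexible with this $V$ precisely when this expression vanishes identically.

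First I would identify the surface $G$ occurring in the statement as the graph $G(u,v)=(u,v,n(u,v))$ built from the vertical component of $V$; the horizontal part $(-kv,ku)$ is merely a rotation of the top view and enters none of the second derivatives above. With this reading, the definition~\eqref{eq:mixedK} of the mixed Gaussian curvature shows that the left-hand side of~\eqref{eq:inf} equals exactly $2K(F,G)$, so the infinitesimal isometry condition is simply $K(F,G)\equiv 0$.

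It then remains to invoke Corollary~\ref{cor:mixed}, which states that $K(F,G)\equiv 0$ if and only if $\area_U(\delta(F),\delta(G))=0$ for every open subdomain $U$, and that the analogous equivalence holds for the duality $\nu$. Chaining these equivalences yields that $F$ is infinitesimally flexible with deformation field $V$ if and only if $\area(\delta(F),\delta(G))\equiv 0$, equivalently $\area(\nu(F),\nu(G))\equiv 0$, as claimed. Since every nontrivial ingredient has already been established earlier in the paper, there is no real obstacle; the only points requiring care are the harmless factor $2$ between~\eqref{eq:mixedK} and~\eqref{eq:inf} and the observation that the rotational part of $V$ drops out of the curvature condition, neither of which affects the vanishing.
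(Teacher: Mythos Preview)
Your proposal is correct and follows exactly the route the paper takes: the proposition is stated immediately after the derivation of~\eqref{eq:inf} with the sentence ``Now, Corollary~\ref{cor:mixed} implies the following proposition,'' so the entire argument is the reduction to $K(F,V)=0$ and an appeal to Corollary~\ref{cor:mixed}. Your additional remarks clarifying that $G$ should be read as the graph $(u,v,n(u,v))$ and that the rotational part of $V$ is irrelevant are helpful glosses on what the paper leaves implicit.
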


\begin{SCfigure}[1.9][t]
  \begin{overpic}[width=.49\textwidth]{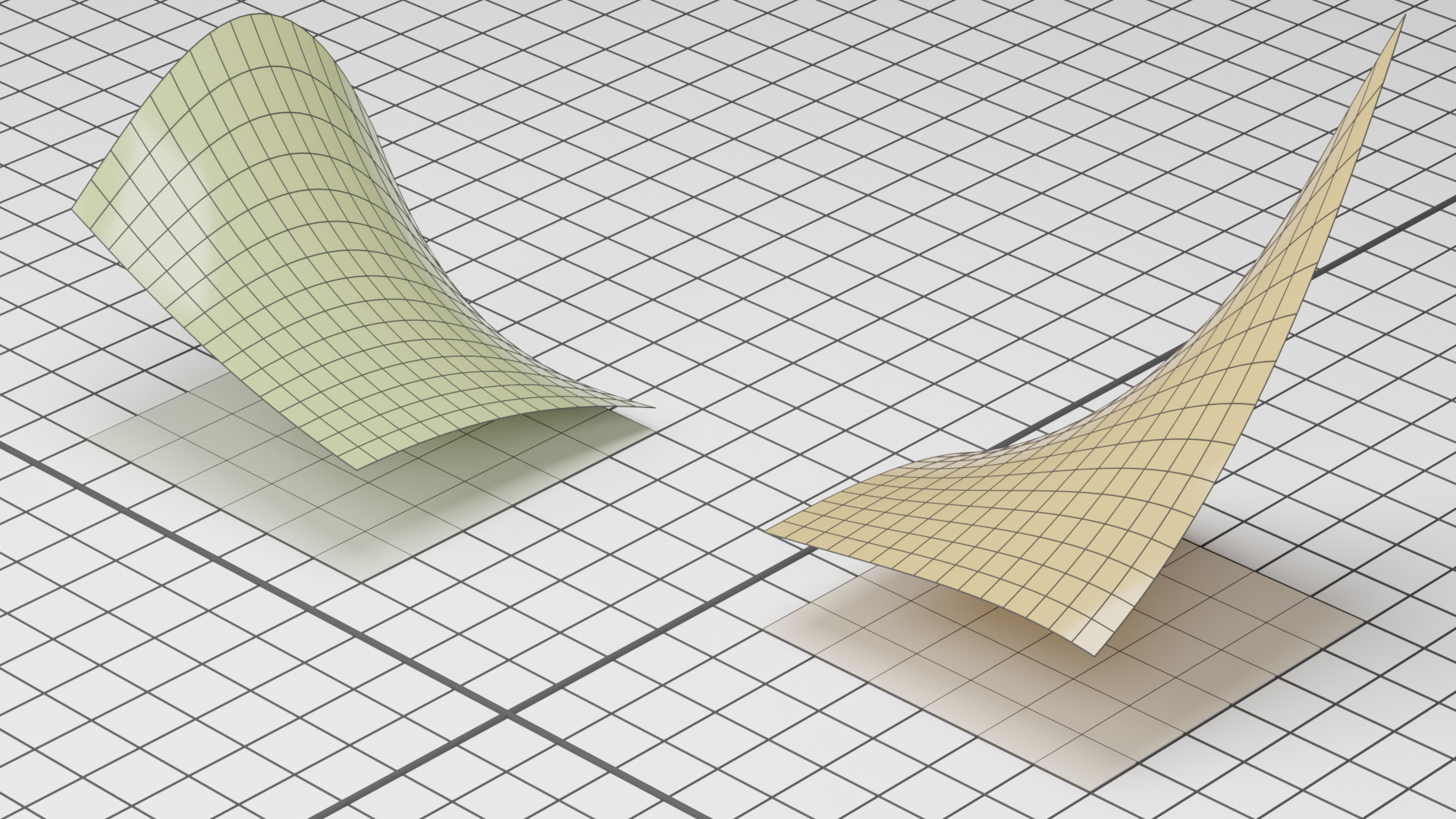}
    \put(6,50){$F$}
    \put(88,27){$V$}
  \end{overpic}
  \caption{An infinitesimally flexible surface $F$ together with its
  velocity vector field $V$. The top views of $F$ and $V$ are rotated by
  $\pi/2$. Due to the symmetry in the flexibility
  condition~\eqref{eq:inf},
  $V$ can be considered as infinitesimally flexible surface with velocity
  vector field $F$.}
\end{SCfigure}

\begin{ex}
  Let $F(u, v) = (u, v, (u^2 + v^2)/2)$ be the isotropic unit sphere of
  parabolic type. Then the differential equation for $n$ in the
  deformation vector field $V$ reads
  \begin{equation*}
    n_{uu} + n_{vv} = 0.
  \end{equation*}
  Consequently, any harmonic function $n$ yields a deformation vector field
  for the unit sphere.
  Note the difference in this example to the Euclidean sphere which is
  not infinitesimally flexible in $E^3$ (see, e.g.,~\cite{liebmann-1900}).
\end{ex}

Equation~\eqref{eq:inf} also appears in statics as Pucher's equation for
the equilibrium of a surface $F$, in the present case without external
loads applied to the interior of $F$. There, one of the surfaces, say $F$
is the surface in equilibrium, while the other one, $V(u, v) = (u, v, n(u,
v))$, is the Airy stress surface. Obviously, due to the symmetry in $f$
and $n$, equilibrium surface $F$ and stress surface $V$ can exchange their
roles. Due to the absence of external loads, equilibrium is already
present in the first two coordinates, i.e., in the top view. Hence, an
infinitesimal isometry of a surface implies a planar system in equilibrium
(elastostatic state) and its associated Airy stress surface.
Strubecker~\cite{strubecker:1962} pointed to the complete analogy between
the isotropic differential geometry of the Airy surface and the mechanical
properties of the planar elastostatic state. For example, principal
stresses and stress directions in 2D correspond to isotropic principal
curvatures and curvature directions of the stress surface. This viewpoint
has been helpful in various applications, such as the understanding of
material minimizing forms and structures~\cite{pellis-optimal-2017}. 

\begin{rem}
  Pucher's equation for equilibrium of a shell $F$ under $z$-parallel
  loads reads
  \begin{equation}
    \label{eq:pucher} 
    f_{uu} n_{vv} - 2 f_{uv} n_{uv} + f_{vv} n_{uu} = L, 
  \end{equation}
  where $L$ is the load per unit area in the top
  view~\cite{Timoshenko:1959}. It follows from our derivation, that $L$ is
  also the derivative $K'$ of isotropic Gaussian curvature $K$ of $F$ with
  respect to $t$ under a top-view preserving (for $L\ne 0$ non-isometric)
  deformation of $F$ with velocity surface $V(u, v) = (u, v, n(u, v))$.
  $V$ is the Airy stress surface of the planar stress state in the top
  view. If $F$ and $V$ agree up to a scalar factor, $f = \lambda n$, one
  speaks of a self-Airy surface $F$. Equation~\eqref{eq:pucher} then reads
  $f_{uu} f_{vv} - f_{uv}^2 = L/(2 \lambda)$. In case of constant load
  $L$, the shell $F$ possesses constant isotropic Gaussian curvature $K =
  L/(2 \lambda)$. For applications of such surfaces, studied in detail by
  Strubecker~\cite{strubecker2}, we refer to Millar et
  al.~\cite{Millar2022}. 
\end{rem}

\subsection{Associated diagrams and relation to statics} 
\label{ssec:diagrams}

Our investigation of infinitesimal isometries in $I^3$ follows Sauer's
approach to infinitesimal isometries in $E^3$~\cite{sauer:1970}. He
associates three surfaces/diagrams to a given surface that undergoes
an infinitesimal isometry. However, a straightforward translation from
$E^3$ to $I^3$ does not work.

\paragraph{Associated diagrams}

One of the three surfaces that Sauer~\cite{sauer:1970} associates with an
infinitesimal isometry of $F(u, v)$ is the \emph{velocity diagram}
(\textit{Ger.\ Verlagerungsriss}) $V(u, v)$. It is formed by the velocity
vector field of the infinitesimal isometry. Our first goal is to express
the velocity diagram in terms of two other diagrams. For that we first
look at one-parameter motions in $E^3$ (left column) and $I^3$ (right
column), respectively.

\begin{multicols}{2}
  \setlength{\columnseprule}{1pt}
  \noindent
  A \emph{Euclidean} one-parameter motion is given by
  $x \mapsto A(t) x + a(t)$
  where $A(t) \in \SO(3)$ are rotations and $a(t) \in \R^3$ is the path of
  the origin.
  The velocity vector $V(x)$ at a point $x \in E^3$ is then of the
  form~\cite[Prop.~3.4.1]{pottwall:2001}

  \begin{equation}
    \label{eq:euclideanvelicitydiag}
    V(x) = \dot x(t) = 
    \bar C + S x, 
  \end{equation}
  with $\bar C := \dot a - \dot A A^{-1} a$ and $S := \dot A A^{-1}$. 

  \columnbreak

  \noindent
  An \emph{isotropic} one-parameter motion is given by
  $x \mapsto A(t) x + a(t)$
  where 
  $A(t)\! =\!
  \Big(
  {
    \tiny\arraycolsep=.3\arraycolsep\def\arraystretch{.9}\ensuremath{%
      \begin{array}{c|c}
        \text{\footnotesize $B(t)$} & \begin{matrix}0\\0\end{matrix}\\
          \hline
          \rule{0mm}{1.3\fontcharht\font`b}
          \begin{matrix}b_1(t) & b_2(t)\end{matrix} & 1
      \end{array}}
  }
  \Big)
  $ 
  with $B(t) \in \SO(2)$, $b_1(t), b_2(t) \in \R$ and $a(t) \in \R^3$.
  The velocity vector $V(x)$ at a point $x \in I^3$ is then in analogy to
  $E^3$ of the form
  \begin{equation}
    \label{eq:isotropicvelicitydiag}
    V(x) = \dot x(t) = 
    \bar D + T x, 
  \end{equation}
  with $\bar D := \dot a - \dot A A^{-1} a$ and $T := \dot A A^{-1}$. 
\end{multicols}
\noindent
The only difference between $E^3$ and $I^3$ lies in the structure of $S$
and $T$:
\begin{multicols}{2}%
  \setlength{\columnseprule}{1pt}%
  \vspace*{-5ex}
  \begin{equation*}
  S 
  = 
  \small\arraycolsep=.3\arraycolsep\def\arraystretch{.9}\ensuremath{%
  \left(\!\!
  \begin{array}{ccc}
    \hphantom{-}0   & -c_3   & \hphantom{-}c_2\\
    \hphantom{-}c_3   & \hphantom{-}0   & -c_1\\
    -c_2 & \hphantom{-}c_1 & \hphantom{-}0
  \end{array}
  \!\right)}
  \end{equation*}

  \columnbreak
  \vspace*{-5ex}
\begin{equation}
  \label{eq:matrixt}
  T 
  = 
  \small\arraycolsep=.3\arraycolsep\def\arraystretch{.9}\ensuremath{%
  \left(\!\!
  \begin{array}{ccc}
    \hphantom{-}0   & c_3   & 0\\
    -c_3   & 0   & 0\\
    \hphantom{-}c_1 & c_2 & 0
  \end{array}
  \right),}
\end{equation}
\end{multicols}
\noindent
where matrix $S$ is skew\dash symmetric yielding $S x = C \times x$ for
$C = (c_1, c_2, c_3) \in \R^3$.

\label{frames}
Let us consider an infinitesimally flexible surface $F(u, v) = (u, v, f(u, v))$
with a corresponding velocity diagram $V(u, v) = (-k v, k u, n)$.
Furthermore, let us fix a single frame 
$(F(u_0, v_0), F_u(u_0, v_0), F_v(u_0, v_0))$ consisting of a
surface point $F$ and the two tangent vectors $F_u, F_v$. The deformation
$F^t = F + t V$ maps this frame to new frames $(F^t, F^t_x, F^t_y)$. The
fact that $V$ is a velocity diagram corresponding to an infinitesimal
isometry implies that this frame deforms isometrically of first order.
Therefore, there are one-parameter motions $\alpha(x, t) = A(t) x + a(t)$
which agree with this motion of frames of first order at $t = 0$, i.e., 
\begin{align*}
  \alpha(F(u_0, v_0), 0) 
  &= F(u_0, v_0),
  \quad
  &
  \textstyle
  \frac{\partial }{\partial t}\alpha(F_u(u_0, v_0), t)\big|_{t = 0} 
  &= V_u(u_0, v_0),
  \\
  \textstyle
  \frac{\partial }{\partial t}\alpha(F(u_0, v_0), t)\big|_{t = 0} 
  &= V(u_0, v_0),
  \quad
  &
  \textstyle
  \frac{\partial }{\partial t}\alpha(F_v(u_0, v_0), t)\big|_{t = 0} 
  &= V_v(u_0, v_0).
\end{align*}
For example in the isotropic case the one-parameter motion 
$\alpha(x, t) = A(t) x + a(t)$, where
\begin{equation*}
  A(t) = 
  \small\arraycolsep=.3\arraycolsep\def\arraystretch{.9}\ensuremath{%
  \left(
  \begin{array}{c@{\hspace{2ex}}c@{\hspace{2ex}}c}
    \hphantom{k}\cos t   & -k \sin t   & 0\\
    k \sin t   & \hphantom{-k} \cos t   & 0\\
    t n_u(u_0, v_0)    & t n_v(u_0, v_0) & 1
  \end{array}
  \right)}
  \quad\text{and}\quad
  a(t) = t (V - F(u_0, v_0))
\end{equation*}
defroms the frame attached to $F$ in the same way of first order as the
infinitesimal deformation $F^t$. Every such one-parameter motion,
Euclidean or isotropic, gives rise to such $\bar C, C$ or $\bar D, T$ from
above, which agree at $t = 0$. Consequently, for each surface point
with parameter $(u, v)$ of an infinitesimal isometry $(F, V)$ there is an
infinitesimal motion given by $\bar C(u, v)$, $C(u, v)$ or $\bar D(u, v)$,
$T(u, v)$, respectively. Hence we obtain simple formulas for the velocity
diagrams:
\begin{multicols}{2}
  \setlength{\columnseprule}{1pt}
  \centerline{$V(u, v) = \bar C(u, v) + C(u, v) \times F(u, v)$}

  \columnbreak

  \centerline{$V(u, v) = \bar D(u, v) + T(u, v)  F(u, v)$}
\end{multicols}
\noindent
In $E^3$ this representation yields the other two surfaces of the 
\emph{displacement diagrams} in the sense of~\cite{sauer:1970}. The
\emph{rotation diagram} (\textit{Ger.\ Drehriss}) $C(u, v)$ and the
\emph{translation diagram} (\textit{Ger.\
Verschiebungsriss}) $\bar C(u, v)$. 

In $I^3$ the definition of the other two displacement diagrams is more
involved as in $E^3$.
Before we develop them in $I^3$ we first note some interesting
relations between the four surfaces $F, V, C, \bar C$ in $E^3$. For
example, if $(F, V)$ are a surface and its velocity diagram to an
infinitesimal isometry with associated displacement diagrams 
$(C, \bar C)$, then also $(C, \bar C)$ can be considered as surface and
velocity diagram of an infinitesimal isometry with displacement diagrams
$(F, V)$.
Furthermore, in $E^3$ the pairs of surfaces $(F, V)$ as well as $(C, \bar
C)$ are orthogonally related (see, e.g.,~\cite{sauer:1970} and the
following definition).

\begin{defn}
  \label{defn:orthogonally}
  Two surfaces $F(u, v)$ and $V(u, v)$ are \emph{orthogonally related}
  if 
  \begin{equation*}
    \la F_u, V_u \ra = 
    \la F_v, V_v \ra = 
    \la F_u, V_v \ra + \la F_v, V_u \ra = 0.
  \end{equation*}
  They are \emph{I-orthogonally related} if their top views $\tilde F$ and
  $\tilde V$ are orthogonally related.
\end{defn}

Now we will develop the isotropic counterparts
of the displacement diagrams.

\paragraph{Velocity diagram in $I^3$}

The isotropic \emph{velocity diagram} $V(u, v)$ of an infinitesimal
deformation is given by Equation~\eqref{eq:ivelocity}. It is also known as
Airy stress surface in statics. Obviously, Equation~\eqref{eq:inf} states
the \emph{permutability of base surface $F$ and velocity diagram $V$. If
$V$ is a velocity diagram of $F$ then $F$ is a velocity diagram of $V$.}
Moreover, in view of Equations~\eqref{eq:Ksum} and~\eqref{eq:mixedK} we
have the following proposition.
\begin{prop}
  Two surfaces $F$ and $V$ are infinitesimally flexible and velocity
  diagrams of each other, if and only if they have vanishing mixed Gaussian
  curvature $K(F, V) = 0$.
\end{prop}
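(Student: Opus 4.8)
The plan is to observe that both halves of the claim collapse onto the single, symmetric identity \eqref{eq:inf}. First I would recall from Definition~\ref{defn:inffelxsmooth} and the graph computation leading to \eqref{eq:ivelocity} that the statement ``$F = (u,v,f)$ is infinitesimally flexible with velocity diagram $V$'' means precisely that $V$ has the form $V = (-kv, ku, n)$ of \eqref{eq:ivelocity} together with the curvature condition \eqref{eq:inf}, namely $f_{uu} n_{vv} - 2 f_{uv} n_{uv} + f_{vv} n_{uu} = 0$. The first concrete task is then to identify the left-hand side of \eqref{eq:inf} with $2 K(F,V)$ via the definition \eqref{eq:mixedK} of the mixed Gaussian curvature; so \eqref{eq:inf} is equivalent to $K(F,V) = 0$. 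This already settles the direction concerning $F$ having $V$ as its velocity diagram.

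Next I would exploit the symmetry of the bilinear form $K(\cdot,\cdot)$: from its definition \eqref{eq:mixedK} it is manifestly symmetric in its two arguments, so $K(F,V) = K(V,F)$. Hence the \emph{same} scalar equation expresses both that $V$ is a velocity diagram of $F$ and that $F$ is a velocity diagram of $V$ — this is exactly the permutability of the third coordinates $f$ and $n$ in \eqref{eq:inf}. The one point that needs care, and which I expect to be the only genuine obstacle, is to check that the top-view structure demanded of a velocity diagram is reciprocal, so that ``$F$ is a velocity diagram of $V$'' is even well-posed. A velocity diagram is obtained from its base surface by rotating the top view through $\pi/2$, and I would verify that rotating the top view $(-kv, ku)$ of $V$ back through $-\pi/2$ returns $k\,(u,v)$, i.e.\ the top view of $F$ up to a harmless scaling. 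Thus $F$ indeed sits relative to $V$ precisely as a velocity diagram must, and the interchange of roles is legitimate.

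Putting these together, both flexibility statements are equivalent to the single condition $K(F,V) = 0$, which yields the asserted ``if and only if''. I expect the argument to be short, since the substantive content was already extracted in \eqref{eq:inf} and in the symmetry of \eqref{eq:mixedK}: the proposition is essentially a repackaging of the permutability of $f$ and $n$ in the infinitesimal-isometry equation. Corollary~\ref{cor:mixed} offers an alternative route, rephrasing $K(F,V) \equiv 0$ as the vanishing of the mixed area of the metric duals, but for this statement the direct identification through \eqref{eq:mixedK} is the cleaner path.
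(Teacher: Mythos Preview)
Your proposal is correct and matches the paper's approach: the paper does not even spell out a separate proof, simply remarking that the proposition follows from Equations~\eqref{eq:Ksum} and~\eqref{eq:mixedK}, i.e., from the identification of \eqref{eq:inf} with the vanishing of the symmetric bilinear form $K(F,V)$. Your additional check that the top-view relation between $F$ and $V$ is reciprocal (rotation by $\pi/2$ in both directions) is a nice bit of care that the paper leaves implicit in its ``permutability of base surface $F$ and velocity diagram $V$'' remark.
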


\paragraph{Displacement diagrams in $I^3$}

The Euclidean displacement diagrams $C, \bar C $ are read of the equation
that relates positions $F$ to velocity vectors $V$, see
Equation~\eqref{eq:euclideanvelicitydiag}. We will use the corresponding
``isotropic'' Equation~\eqref{eq:isotropicvelicitydiag} $V(u, v) = \bar
D(u, v) + T(u, v)  F(u, v)$ from above to develop sensible notions of
isotropic displacement diagrams. For that, we look at its coordinate
equations using Equations~\eqref{eq:ivelocity} and~\eqref{eq:matrixt}
\begin{equation*}
  \sv{\!-k v\\\! \hphantom{-}k u\\\! \hphantom{-}n}
  =
  \sv{\bar d_1\\ \bar d_2 \\ \bar d_3}
  +
  \left(\!\!\!\!
  \begin{array}{ccc}
    \hphantom{-}0   & c_3   & 0\\
    -c_3   & 0   & 0\\
    \hphantom{-}c_1 & c_2 & 0
  \end{array}
  \!\right)
  \sv{u\\v\\f}.
\end{equation*}
Let us fix a point $F(u, v) = (u, v, f)$. The isotropic motion
$x \mapsto \bar D + T x$ with fixed $\bar D$ and $T$ maps the point $F(u,
v) = (u, v, f)$ to its velocity vector $V(u, v) = (-k v, k u, n)$ and the
corresponding frame $(F_u, F_v)$ to $(V_u, V_v)$, i.e., we have
\begin{align*}
  V(u, v) = \bar D + T F(u, v),
  \quad
  V_u(u, v) = T F_u(u, v),
  \quad
  V_v(u, v) = T F_v(u, v),
\end{align*}
which implies 
$c_1 = n_u$, $c_2 = n_v$, $c_3 = -k$, $\bar d_1 = \bar d_2 = 0$, and 
$d_3 = n - n_u u - n_v v$. Consequently,
\begin{equation*}
  \bar D(u, v) = 
  \sv{0\\ 0\\ n - n_u u - n_v v}
  \quad\text{and}\quad
  T(u, v) = 
  \left(\!
  \begin{array}{ccc}
    0   & -k   & 0\\
    k   & \hphantom{-}0   & 0\\
    n_u & \hphantom{-}n_v & 0
  \end{array}
  \!\right),
\end{equation*}
which do not describe surfaces and can therefore not directly be
interpreted as displacement diagrams.
However, the diagrams that we define below project to $\bar D$ and $T$.

\begin{lem}
  \label{lem:g}
  Let $F = (u, v, f)$ and $V = (-k v, k u, n)$ be a pair of
  surface and velocity diagram of an infinitesimal isometry.
  Then there is a function $c(u, v)$ such that 
  \begin{equation}
    \label{eq:g}
    c_u = f_u n_{uv} - f_v n_{uu}
    \quad\text{and}\quad
    c_v = f_u n_{vv} - f_v n_{uv}.
  \end{equation}
\end{lem}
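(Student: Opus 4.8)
The plan is to treat this as a classical integrability (exactness) problem. The two prescribed equations say that the $1$-form
\begin{equation*}
  \omega
  =
  (f_u n_{uv} - f_v n_{uu})\, \dd u
  +
  (f_u n_{vv} - f_v n_{uv})\, \dd v
\end{equation*}
should be the differential $\dd c$ of the sought function $c$. On a simply connected parameter domain this is possible if and only if $\omega$ is closed, i.e.\ if the two candidate partial derivatives are compatible:
\begin{equation*}
  \partial_v(f_u n_{uv} - f_v n_{uu})
  =
  \partial_u(f_u n_{vv} - f_v n_{uv}).
\end{equation*}
So the whole statement reduces to verifying this single scalar identity, after which $c$ is obtained simply by integrating $\omega$ (unique up to an additive constant).

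The key step is the cross-derivative computation, which I would carry out by expanding both sides with the product rule. On the left one obtains $f_{uv} n_{uv} + f_u n_{uvv} - f_{vv} n_{uu} - f_v n_{uuv}$, and on the right $f_{uu} n_{vv} + f_u n_{uvv} - f_{uv} n_{uv} - f_v n_{uuv}$. The two third-order contributions $f_u n_{uvv}$ and $f_v n_{uuv}$ appear identically on both sides and cancel when taking the difference. What survives is precisely
\begin{equation*}
  \big(f_u n_{uv} - f_v n_{uu}\big)_v - \big(f_u n_{vv} - f_v n_{uv}\big)_u
  =
  2 f_{uv} n_{uv} - f_{uu} n_{vv} - f_{vv} n_{uu},
\end{equation*}
which is exactly $-K(F, V)$ in the notation of Equations~\eqref{eq:Ksum} and~\eqref{eq:mixedK}.

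The conclusion is then immediate: since $(F, V)$ is a pair of surface and velocity diagram of an infinitesimal isometry, the flexibility condition~\eqref{eq:inf} gives $K(F, V) = f_{uu} n_{vv} - 2 f_{uv} n_{uv} + f_{vv} n_{uu} = 0$, so the closedness condition holds and $c$ exists as claimed. I do not expect a genuine obstacle here; the only thing that requires a little care is the bookkeeping of the mixed third derivatives, and the real content is the clean observation that the compatibility condition for~\eqref{eq:g} is nothing other than the vanishing of the mixed Gaussian curvature $K(F,V)$. It is worth remarking that the resulting $c$ is defined only up to a constant, which reflects the ambiguity (e.g.\ the translational freedom in the isotropic direction) already seen in the surrounding discussion of displacement diagrams.
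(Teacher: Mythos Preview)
Your proof is correct and follows exactly the same approach as the paper: verify the integrability condition $c_{uv} = c_{vu}$ and observe that it is equivalent to the infinitesimal isometry condition~\eqref{eq:inf}. The paper's proof is simply the one-line statement of this fact, while you have spelled out the cross-derivative computation in detail; the content is identical.
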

\begin{proof}
  We have to verify the integrability condition $c_{uv} = c_{vu}$ which is
  equivalent to~\eqref{eq:inf}.
\end{proof}

In what follows we choose $k = -1$ for the velocity diagram.

\begin{defn}
  \label{defn:diagrams}
  Let $F = (u, v, f)$ and $V = (-v, u, n)$ be a pair of
  surface and velocity diagram of an infinitesimal isometry.
  The \emph{isotropic displacement diagrams} consisting of a
  \emph{translation diagram} $\bar C$ and \emph{rotation diagram} $C$ are
  \begin{equation*}
    \bar C(u, v) = (-n_v, n_u, -n + n_u u + n_v v) 
    \quad\text{and}\quad
    C(u, v) = (-n_u, -n_v, c),
  \end{equation*}
  where $c$ is the function from Lemma~\ref{lem:g}.
\end{defn}

We will use in the following the Euclidean rotation around the $z$-axis
about an angle of $\pi/2$ and denote it by
\begin{equation}
  \label{eq:J}
  J =
  \left(\!
  \begin{array}{ccc}
    0   &\!   -1             &   0\\
    1   &\!   \hphantom{-}0  &   0\\
    0   &\!   \hphantom{-}0  &   1
  \end{array}
  \!\right).
\end{equation}

\begin{thm} 
  \label{thm:diagrams}
  An infinitesimal isometry of a surface $F$ in $I^3$, the associated
  velocity diagram $V$, rotation diagram $C$ and translation diagram
  $\bar C$ possess the contact element representations
  \begin{equation}
    \label{eq:diagrams}
    \begin{aligned}
      &
      F = (u, v, f, f_u, f_v),
      &&
      V = (-v, u, n, -n_v, n_u),\\
      &
      C = (-n_u, -n_v, c, f_v, -f_u),
      &&
      \bar C = (-n_v, n_u, -n + n_u u + n_v v, -v, u).
    \end{aligned}
  \end{equation}
  Functions $f, n$ satisfy Equation~\eqref{eq:inf} and $c$ satisfies
  Equation~\eqref{eq:g}. Relations between these four surfaces are
  as follows (schematically illustrated by Fig.~\ref{fig:relations}):
  \begin{enumerate}
    \item\label{itm:isoi} $F$ and $V$, as well as $C$ and $\bar C$ are
      \emph{I\dash orthogonally related}, i.e., their top views are
      orthogonally related (Def.~\ref{defn:orthogonally}).
    \item\label{itm:isov} $V$ and $\bar C$ correspond in the metric
      duality $\delta$. 
    \item\label{itm:isoiv} The tangent planes in corresponding points of
      $F$ and $J C$ are parallel to each other.
    \item\label{itm:isoiii} The surface $\bar C$ is a velocity diagram for
      an infinitesimal isometry of the surface $C$ and vice versa.
  \end{enumerate}
\end{thm}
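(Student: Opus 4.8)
The plan is to nail down the four contact-element quintuples first, because once the tangent-plane normals are in hand, every relation (i)--(iv) becomes a short check. For a parametrized surface $(x,y,z)(u,v)$ the normal $(p,q,-1)$ is pinned down by the two scalar equations $z_u = p\,x_u + q\,y_u$ and $z_v = p\,x_v + q\,y_v$ of~\eqref{eq:system}, so I would simply solve these for each of $F,V,C,\bar C$. For $F=(u,v,f)$ they give $(p,q)=(f_u,f_v)$; for $V=(-v,u,n)$ the $2\times 2$ system yields $q=n_u$, $p=-n_v$; for $\bar C$, differentiating $-n+n_uu+n_vv$ produces $z_u=n_{uu}u+n_{uv}v$, $z_v=n_{uv}u+n_{vv}v$, which $(p,q)=(-v,u)$ reproduces; and for $C=(-n_u,-n_v,c)$ the defining relations~\eqref{eq:g} for $c$ are exactly the statement that $(p,q)=(f_v,-f_u)$ solves the system. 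This routine part fixes~\eqref{eq:diagrams}.

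For (i) I would note that in each pair the top views differ by a quarter turn: $\tilde V=J\tilde F$ and $\tilde{\bar C}=-J\tilde C$ with $J$ as in~\eqref{eq:J}. Since $\langle a,\pm Ja\rangle=0$ and $J^\top=-J$, the three identities of Definition~\ref{defn:orthogonally} follow at once (alternatively one reads them off from components, e.g.\ $\langle\tilde C_u,\tilde{\bar C}_u\rangle=n_{uu}n_{uv}-n_{uv}n_{uu}=0$). Statement (ii) is immediate from Lemma~\ref{lemma1}: applying the $\delta$-rule $(u,v,w,p,q)\mapsto(p,q,pu+qv-w,u,v)$ to $V=(-v,u,n,-n_v,n_u)$ returns $(-n_v,n_u,n_vv+n_uu-n,-v,u)=\bar C$. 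For (iii) I would push the contact element of $C$ through the congruence $J$: the normal of $C$ is $(f_v,-f_u,-1)$, and since $J\in\SO(3)$ carries normals to normals, the normal of $JC$ is $J(f_v,-f_u,-1)=(f_u,f_v,-1)$, which is exactly the normal of $F$; hence the tangent planes agree in direction at equal parameters.

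The substance of the theorem is (iv), and this is the step I expect to be the main obstacle. The metric half is free: the deformation $C^t=C+t\bar C$ has $\tfrac{d}{dt}\tilde I^t|_{0}=0$ precisely because $\tilde C,\tilde{\bar C}$ are orthogonally related, which is (i); consequently the denominator $W=\tilde C_u^2\tilde C_v^2-\langle\tilde C_u,\tilde C_v\rangle^2$ of~\eqref{eq:general-K} satisfies $W'(0)=0$, and the curvature condition $\tfrac{d}{dt}K(C^t)|_0=0$ collapses to $N'(0)=0$ for the determinant numerator $N=\det(C_u,C_v,C_{uu})\det(C_u,C_v,C_{vv})-\det(C_u,C_v,C_{uv})^2$. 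I would expand $N'(0)$ by multilinearity of the three determinants in the $\bar C$-direction and simplify using the symmetry of the third derivatives of $n$ together with the relations~\eqref{eq:g} for the first derivatives of $c$; the expectation is that after cancellation $N'(0)$ is a scalar multiple of the left-hand side of~\eqref{eq:inf}, so that the hypothesis $f_{uu}n_{vv}-2f_{uv}n_{uv}+f_{vv}n_{uu}=0$ forces it to vanish. Since the same symmetric condition~\eqref{eq:inf} governs the deformation $\bar C^t=\bar C+tC$, the ``vice versa'' holds simultaneously.

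A cleaner conceptual alternative, which I would keep in reserve, is to invoke Proposition~\ref{prop:flexmixed}. As $\delta$ is an involution on contact elements, (ii) gives $\delta(\bar C)=V$, so flexibility of $(C,\bar C)$ is equivalent to $\area(\delta(C),V)\equiv0$, whereas the given flexibility of $(F,V)$ reads $\area(\delta(F),\bar C)\equiv0$. One would then try to deduce the former from the latter by transporting the mixed-area identity of Corollary~\ref{cor:mixed} around the duality square $F\to V\to\bar C\to C\to F$ assembled from (i)--(iii). The bookkeeping of that transport, or equivalently the determinant expansion above, is the real difficulty in either route; everything else is bookkeeping on the quintuples.
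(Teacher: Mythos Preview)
Your proposal is correct and follows essentially the same route as the paper: the contact elements are fixed via the system~\eqref{eq:system} (and Lemma~\ref{lem:g} for $C$), parts (i)--(iii) are the same short checks you describe, and for (iv) the paper likewise reduces to showing $\tfrac{d}{dt}K(C+t\bar C)\big|_{t=0}$ is a scalar multiple of $K(F,V)$. The only minor difference in execution is that the paper computes the full $t$-dependence of each determinant and first-fundamental-form entry in~\eqref{eq:general-K}, finds a common factor $(1+t^2)$ throughout (and $(n_{uu}n_{vv}-n_{uv}^2)$ in the determinants), and only then differentiates---this factorization makes the final step cleaner than a raw multilinear expansion of $N'(0)$, but the content is the same.
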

\begin{proof}
  The contact element representations are obtained following
  Section~\ref{subsec:contactelements} from the corresponding
  parametrizations and for $C$ with the help of Lemma~\ref{lem:g}.

  Ad \ref{itm:isoi}: 
  The top views of $F$ and $V$ are related by a Euclidean rotation about
  $\pi/2$. Therefore we must have
  $\la \tilde F_u, \tilde V_u \ra 
  =
  \la \tilde F_v, \tilde V_v \ra 
  = 0
  $
  and 
  $\la \tilde F_u, \tilde V_v \ra + \la \tilde F_v, \tilde V_u \ra 
  = 0$, 
  which implies that $F$ and $V$ are I\dash orthogonally related.
  Analogously, $C$ and $\bar C$ are I\dash orthogonally related.

  Ad \ref{itm:isov}: 
  Applying Lemma~\ref{lemma1} yields metric duality between $V$ and $\bar
  C$.

  Ad \ref{itm:isoiv}: 
  The ``normalized'' normal vectors of $F$ and $C$ are easily read of from
  their contact element representations. The normal vector of $F$ is
  $(f_u, f_v, -1)$ and of $C$ it is $(f_v, -f_u, -1)$. Consequently, the
  rotation with matrix $J$ from Equation~\eqref{eq:J} rotates 
  $C$ to $J C$ with normal vector $(f_u, f_v, -1)$ equal to the one of
  $F$.

  Ad \ref{itm:isoiii}: 
  We have to show~\eqref{eq:inf-isom}, i.e., 
  $\frac{\dd}{\dd t} \tilde I^t\big|_{t = 0} = 0$, and 
  $\frac{\dd}{\dd t} K(C + t \bar C)\big|_{t = 0} = 0$. We set 
  $G := C + t \bar C$.
  Simple computations show 
  $\frac{\dd}{\dd t} \la \tilde G_u, \tilde G_u \ra\big|_{t = 0} = 
  \frac{\dd}{\dd t} \la \tilde G_u, \tilde G_v \ra\big|_{t = 0} = 
  \frac{\dd}{\dd t} \la \tilde G_v, \tilde G_v \ra\big|_{t = 0} = 0$, 
  and therefore
  $\frac{\dd}{\dd t} \tilde I^t\big|_{t = 0} = 0$.
  After lengthy but simple computations we obtain
  \begin{align*}
    &\det(G_u, G_v, G_{uu})
    =
    \ldots
    =
    (1 + t^2)
    (n_{uu} n_{vv} - n_{uv}^2)
    (t n_{uu} - n_{uu} f_{uv} + n_{uv} f_{uu})
    \\
    &\det(G_u, G_v, G_{vv})
    =
    \ldots
    =
    (1 + t^2)
    (n_{uu} n_{vv} - n_{uv}^2)
    (t n_{vv} + n_{vv} f_{uv} - n_{uv} f_{vv})
    \\
    &\det(G_u, G_v, G_{uv})
    =
    \ldots
    =
    (1 + t^2)
    (n_{uu} n_{vv} - n_{uv}^2)
    (t n_{uv} + n_{uv} f_{uv} - n_{uu} f_{vv})
    \\
    &
    \la\tilde G_u, \tilde G_u\ra
    =
    \ldots
    =
    (1 + t^2) (n_{uv}^2 + n_{uu}^2)
    \\
    &
    \la\tilde G_v, \tilde G_v\ra
    =
    \ldots
    =
    (1 + t^2) (n_{uv}^2 + n_{vv}^2)
    \\
    &
    \la\tilde G_u, \tilde G_v\ra
    =
    \ldots
    =
    (1 + t^2) (n_{uu} + n_{vv}) n_{uv}.
  \end{align*}
  Putting these expressions together yields
  \begin{align*}
    &
    \frac{\dd}{\dd t} K(C + t \bar C)\big|_{t = 0}
    =
    \frac{\dd}{\dd t} K(G)\big|_{t = 0}
    =
    \ldots
    =
    n_{uv} (f_{uu} n_{vv} - 2 f_{uv} n_{uv} + f_{vv} n_{uu})
    \\
    =\ &
    n_{uv} K(F, V) 
    = 
    0,
  \end{align*}
  which is what we wanted to show.
\end{proof}

\begin{figure}[tb]
  \hfill
  \begin{overpic}[width=.4\textwidth]{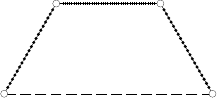}
    \put(-5,0){$V$}
    \put(101,0){$\bar C$}
    \put(78,41){$C$}
    \put(18,41){$F$}
  \end{overpic}
  \hfill
  \hfill
  \begin{overpic}[width=.15\textwidth]{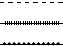}
    \put(106,-2){\small\ I-orthogonal}
    \put(106,34){\small\ $\pi/2$-parallel}
    \put(106,65){\small\ $\delta$-dual}
  \end{overpic}
  \hfill
  \hfill{}
  \caption{Relations between a surface $F$, its velocity diagram $V$,
  rotation diagram $C$ and translation diagram $\bar C$. Their relations
  are described in Theorem~\ref{thm:diagrams}.}
  \label{fig:relations}
\end{figure}

\subsection{Relation between two isometric positions} 

There is a close relation between two isometric surfaces and an
infinitesimal isometry, based on the following simple observation:

\begin{prop} 
  \label{prop:2pos} 
  Let $F = (u, v, f)$ and $\bar F = (u, v, \bar f)$ be two isometric
  surfaces in $I^3$. Then $V = (-v, u, f - \bar f)$ is a velocity diagram
  of an infinitesimal isometry of the sum surface $(u, v, f + \bar f)$
  or the middle surface $M = (u, v, (f + \bar f)/2)$.
\end{prop}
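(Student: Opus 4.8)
The plan is to reduce the claim to the single scalar identity \eqref{eq:inf}. By Definition~\ref{defn:diagrams} and Equation~\eqref{eq:inf}, a graph surface $G = (u, v, g)$ carries the admissible velocity diagram $V = (-v, u, n)$ of an infinitesimal isometry exactly when
\[
  g_{uu} n_{vv} - 2 g_{uv} n_{uv} + g_{vv} n_{uu} = 0 .
\]
Since the third coordinate of $V = (-v, u, f - \bar f)$ is $n = f - \bar f$ and its top view $(-v, u)$ has precisely the required form \eqref{eq:ivelocity}, it suffices to insert $g = f + \bar f$ (for the sum surface) and $g = (f + \bar f)/2$ (for the middle surface $M$), together with $n = f - \bar f$, and to show that the left-hand side vanishes.

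The key observation is that the left-hand side is a symmetric bilinear pairing of the Hessians of its two arguments. Writing
\[
  B(a, b) := a_{uu} b_{vv} - 2 a_{uv} b_{uv} + a_{vv} b_{uu},
\]
this is (twice) the mixed Gaussian curvature of \eqref{eq:mixedK}, it satisfies $B(a, b) = B(b, a)$, and by \eqref{eq:graphhk} one has $B(a, a) = 2(a_{uu} a_{vv} - a_{uv}^2) = 2K$. I would then expand by bilinearity,
\[
  B(f + \bar f,\, f - \bar f) = B(f, f) - B(\bar f, \bar f),
\]
the two cross terms $\pm B(f, \bar f)$ cancelling by symmetry, so that $B(f + \bar f,\, f - \bar f) = 2K(F) - 2K(\bar F)$.

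The isometry hypothesis (Definition~\ref{defn:isometric}) is precisely $K(F) = K(\bar F)$ at corresponding points, hence the right-hand side vanishes identically and $V$ is a velocity diagram of an infinitesimal isometry of the sum surface. For the middle surface $M$ the first argument is merely halved, so $B((f + \bar f)/2,\, f - \bar f) = \tfrac{1}{2} B(f + \bar f,\, f - \bar f) = 0$ and the same conclusion follows. I expect no real obstacle: the computation is short, and the only point needing care is the bookkeeping that makes the cross terms cancel. Conceptually, the proposition is just the polarization identity for the Gaussian-curvature quadratic form, turning \emph{equal curvature} of $F$ and $\bar F$ into \emph{vanishing mixed curvature} of their sum and difference.
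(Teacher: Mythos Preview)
Your argument is correct and is essentially the paper's own proof: the paper also expands $K(F)=K(\bar F)$ directly into $(f+\bar f)_{uu}(f-\bar f)_{vv}+(f+\bar f)_{vv}(f-\bar f)_{uu}-2(f+\bar f)_{uv}(f-\bar f)_{uv}=0$ and identifies this with~\eqref{eq:inf}, which is exactly your polarization computation $B(f+\bar f,\,f-\bar f)=B(f,f)-B(\bar f,\bar f)$. One small nit: the reference to Definition~\ref{defn:diagrams} is off (that definition concerns $C,\bar C$); the characterization you are invoking is the derivation leading from Definition~\ref{defn:inffelxsmooth} to Equations~\eqref{eq:ivelocity} and~\eqref{eq:inf}.
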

\begin{proof}
  Two surfaces $F$ and $\bar F$ with the same top view are isometric if
  and only if their Gaussian curvature is the same implying 
  $f_{uu} f_{vv} - f_{uv}^2 = \bar f_{uu} \bar f_{vv} - \bar f_{uv}^2$
  which is equivalent to 
  $(f + \bar f)_{uu} (f - \bar f)_{vv} + (f + \bar f)_{vv} (f - \bar f)_{uu} 
  - 2 (f + \bar f)_{uv} (f - \bar f)_{uv} = 0$.
  This equation is precisely Equation~\eqref{eq:inf}, the mixed Gaussian
  curvature for  the surface $(u, v, f + \bar f)$ with velocity
  diagram $(-v, u, f - \bar f)$. 
\end{proof}

By symmetry, $M$ is also a velocity diagram of an infinitesimal isometry
of $V$. However, note that $M$ is not isometric to $V$. The converse of
Proposition~\ref{prop:2pos} is the follwing.
\begin{prop}
  \label{prop:isofromdiagr} 
  Let $V = (-v, u, n)$ and $F = (u, v, f)$ be two surfaces.
  Then the two surfaces $F^+ = (u, v, f + n)$ and $F^- = (u, v, f - n)$
  are isometric to each other if and only if $V$ is a velocity diagram for
  an infinitesimal isometry of $F$. See
  Figure~\ref{fig:isometry-from-inf-iso} for an example.
\end{prop}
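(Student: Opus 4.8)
The plan is to establish the "if and only if" by showing that the isometry condition for $F^+ = (u,v,f+n)$ and $F^- = (u,v,f-n)$ is literally the same equation as the infinitesimal-isometry condition relating $V$ and $F$. Since both surfaces $F^\pm$ share the top view $(u,v)$, Definition~\ref{defn:isometric} tells us they are isometric if and only if their Gaussian curvatures agree, i.e.\ using~\eqref{eq:graphhk},
\begin{equation*}
  (f+n)_{uu}(f+n)_{vv} - (f+n)_{uv}^2
  =
  (f-n)_{uu}(f-n)_{vv} - (f-n)_{uv}^2.
\end{equation*}

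First I would expand both sides. The pure-$f$ terms $f_{uu}f_{vv}-f_{uv}^2$ and the pure-$n$ terms $n_{uu}n_{vv}-n_{uv}^2$ appear identically on both sides and cancel in the difference. What survives is exactly the cross terms, so the equation reduces to the vanishing of
\begin{equation*}
  2\bigl(f_{uu}n_{vv} - 2 f_{uv}n_{uv} + f_{vv}n_{uu}\bigr) = 0,
\end{equation*}
which is precisely $2\,K(F,V) = 0$ with $V=(-v,u,n)$, that is, Equation~\eqref{eq:inf}. This is the central algebraic step, and it is genuinely routine: the key structural fact is that the Gaussian curvature~\eqref{eq:graphhk} is a quadratic form in the second derivatives, so replacing $f$ by $f\pm n$ produces a polarization whose diagonal (pure) parts cancel and whose mixed part is the bilinear form $K(\cdot,\cdot)$ from~\eqref{eq:mixedK}.

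To close the equivalence I would simply recall that, by Definition~\ref{defn:inffelxsmooth} specialized to graph parametrizations (and the reduction carried out around~\eqref{eq:ivelocity}), $V=(-v,u,n)$ being a velocity diagram for an infinitesimal isometry of $F$ is by definition condition~\eqref{eq:inf}, $K(F,V)=0$. Since the isometry condition for $(F^+,F^-)$ has just been shown equivalent to the same equation, the two statements are equivalent, proving the proposition. I expect no real obstacle here: the only subtlety worth noting explicitly is that the top-view metric part of infinitesimal isometry is automatically satisfied by the chosen form $V=(-v,u,n)$ (as established when passing from~\eqref{eq:ivelocity-allg} to~\eqref{eq:ivelocity}), so that the full infinitesimal-isometry condition really does collapse to the single scalar equation~\eqref{eq:inf}.
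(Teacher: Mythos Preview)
Your proposal is correct and follows exactly the approach the paper takes: the paper's proof is a single sentence noting that $K(F^+) = K(F^-)$ if and only if $K(F,V) = 0$, and you have simply spelled out this short computation in detail. Your added remark that the metric part of the infinitesimal isometry is automatic for $V=(-v,u,n)$ is a helpful clarification but not something the paper makes explicit here.
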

\begin{proof}
  This follows from a short computation as $K(F^+) = K(F^-)$ if and only
  if $K(F, V) = 0$.
\end{proof}
However, note that $F^+$ and $F^-$ are not isometric to $F$ or $V$.

\subsection{Isometric ruled surfaces} 
\label{ssec:2positions}

In this section we investigate characterizations of pairs of ruled
surfaces being isometric to each other such that the rulings correspond
in the isometry.

\begin{defn}
  \label{defn:ruling-preserving-inf-iso}
  An infinitesimal isometry is called \emph{ruling preserving} if the
  corresponding velocity diagram is also a ruled surface.
\end{defn}

The following proposition provides an example for a ruling preserving
infinitesimal isometry that will be needed in Section~\ref{sec:special}.

\begin{prop} 
  \label{prop:minding} 
  Let $F_1 = (u, v, f_1)$, $F_2 = (u, v, f_2)$ be two ruled surfaces in
  $I^3$ with the same top view of rulings, i.e., corresponding rulings lie
  in the same isotropic plane. Let us further denote the difference
  surface by $V = (-v, u, f_1 - f_2)$.
  \begin{enumerate}
    \item\label{itm:ruledi} Then $V$ is a non-planar torsal ruled surface
      if and only if $F_1$ and $F_2$ are related by a non-trivial
      isotropic Minding isometry (p.~\pageref{par:minding-iso}).
      A planar surface $V$ implies that $F_1$ and $F_2$ are congruent in
      $I^3$, which we call a trivial isometry. 
    \item\label{itm:ruledii} $V$ is a velocity diagram of an infinitesimal
      ruling preserving isometry of the middle surface 
      $M = (u, v, (f_1 + f_2)/2)$. 
    \item\label{itm:rulediii} $V$ is the tangent surface of a space
      curve if $F_1$ and $F_2$ are ruled surfaces of type~I, a cone for
      type~II and a non-isotropic cylinder for type~III.
  \end{enumerate}
\end{prop}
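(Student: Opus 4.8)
The plan is to handle the three parts in the order (ii), (i), (iii), extracting first the fact that $V$ is always ruled. Since $F_1$ and $F_2$ share the top view of their rulings, along each ruling line $\ell$ in the parameter domain both $f_1$ and $f_2$ are affine, hence so is $n := f_1 - f_2$; as the top view $(-v, u)$ is itself affine along $\ell$, the restriction of $V = (-v, u, n)$ to $\ell$ is a straight line. Thus $V$ is ruled, its rulings lying over the same domain lines as those of $F_1$ and $F_2$, but with top views rotated by the map $J$ of \eqref{eq:J}. For (ii) I would then invoke Proposition~\ref{prop:2pos}: whenever $F_1, F_2$ are isometric --- which is exactly the nontrivial case of (i) --- the difference surface $V$ is a velocity diagram of an infinitesimal isometry of the middle surface $M = (u, v, (f_1 + f_2)/2)$, and by the observation above this infinitesimal isometry is ruling preserving in the sense of Definition~\ref{defn:ruling-preserving-inf-iso}. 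Concretely one checks $K(M, V) = K(F_1) - K(F_2)$, so the hypothesis of Proposition~\ref{prop:2pos} is precisely the isometry $K(F_1) = K(F_2)$.

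The heart of the statement is (i), and I would obtain torsality of $V$ directly from its normal field. From $V_u = (0, 1, n_u)$ and $V_v = (-1, 0, n_v)$ the surface normal is proportional to $(n_v, -n_u, 1)$, and $V$ is torsal exactly when this direction is constant along each ruling, that is, when the ruling direction $w$ in the domain lies in the kernel of the Hessian $\nabla^2 n$. Since the second derivative of each $f_i$ along the ruling vanishes, $w$ is already a null vector of the quadratic forms $\nabla^2 f_1$ and $\nabla^2 f_2$, hence of $\nabla^2 n$; for such a $w$ the kernel condition is equivalent to the single scalar equation $\det \nabla^2 n = n_{uu} n_{vv} - n_{uv}^2 = 0$, which by bilinearity of the mixed Gaussian curvature \eqref{eq:mixedK} reads $K(F_1) + K(F_2) = 2 K(F_1, F_2)$. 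Rotating the domain so that the common ruling becomes the first coordinate direction makes everything explicit: there $f_{1,uu} = f_{2,uu} = 0$, so $K(F_i) = -f_{i,uv}^2$ by \eqref{eq:graphhk} and $\det \nabla^2 n = -(f_{1,uv} - f_{2,uv})^2$. Hence $V$ is torsal iff $f_{1,uv} = f_{2,uv}$, whereas $F_1, F_2$ are isometric iff $f_{1,uv} = \pm f_{2,uv}$; torsality therefore forces $K(F_1) = K(F_2)$ and selects the branch with matching sign. Finally $V$ is planar iff $n$ is affine, i.e.\ iff $f_1 - f_2$ is an affine function, so that $F_2$ arises from $F_1$ by a $z$-parallel shearing and thus an isotropic congruence --- the trivial isometry.

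It remains, for (i), to identify the branch $f_{1,uv} = f_{2,uv}$ with a genuine Minding isometry and the branch $f_{1,uv} = -f_{2,uv}$ with a (trivial) reflection. Here I would use Sachs' result that an isotropic ruled surface is determined up to congruence by its curvature $\kappa$, torsion $\tau$ and striction $\sigma$, and that Minding isometries are exactly those that fix $\kappa$ and $\sigma$ while varying $\tau$. Equality of $K = -\rho^2/w^4$ (respectively $K = -\rho^2$ for type~III) along corresponding rulings \eqref{eq:lamarle} gives equal pitch $\rho = \sigma/\kappa$; together with the shared top view of rulings this forces equal $\kappa$ and $\sigma$, so the two surfaces differ only in torsion and are Minding related, while the opposite-sign branch reverses the twist and realises the reflection at $z = 0$, an orientation-reversing congruence. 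I expect this geometric identification --- separating the Minding deformation from the reflected and sheared congruences by means of the invariants $(\kappa, \tau, \sigma)$ --- to be the main obstacle; the torsality computation itself is routine.

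For (iii) I would argue in the torsal (Minding) case established by (i), where $V$ is a non-planar developable and hence a cone, a cylinder, or the tangent surface of its edge of regression. The trichotomy is decided by the top views of the rulings of $V$, which are the $J$-images of the ruling top views of $F_1$ and $F_2$. For type~I these envelope a curve, so consecutive rulings of $V$ meet along a space curve and $V$ is its tangent surface; for type~II they pass through a common point, so the rulings of $V$ concur in an apex and $V$ is a cone; for type~III they are parallel, so the constant ruling direction of $V$ has nonzero top view and $V$ is a non-isotropic cylinder.
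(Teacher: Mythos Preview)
Your route is correct in outline but genuinely different from the paper's, and the difference is instructive. The paper does not go through the Hessian $\nabla^2 n$ at all; instead it proves the additive relation $\sigma(F_1) - \sigma(F_2) = \sigma(J^{-1}V)$ for the strictions (Fig.~\ref{fig:striction}), divides by the common curvature $\kappa$ of the top-view striction curve to get $\rho(F_1) - \rho(F_2) = \rho(V)$, and then reads off (i) directly: $V$ torsal $\Leftrightarrow \rho(V) = 0 \Leftrightarrow \rho(F_1) = \rho(F_2)$, which together with the shared $\kappa$ is precisely the Minding condition (same $\kappa$, same $\sigma$, free $\tau$). The point is that this additivity carries the \emph{sign} of $\rho$ for free, so the ambiguity you isolate --- $K_1 = K_2$ giving only $f_{1,uv} = \pm f_{2,uv}$ in your rotated frame --- never arises.

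Your computation $\det\nabla^2 n = 0 \Leftrightarrow f_{1,uv} = f_{2,uv}$ is valid (the step ``$w^T A w = 0$ and $\det A = 0 \Rightarrow Aw = 0$'' is fine for symmetric $2\times 2$ matrices), and you correctly flag the remaining obstacle: identifying $f_{1,uv} = f_{2,uv}$ with the Minding branch and $f_{1,uv} = -f_{2,uv}$ with a reflected congruence. To close this you would still need to argue that the signed pitch $\rho$ coincides (up to a fixed factor) with $f_{uv}$ in your rotated frame --- essentially that $f_{uv}$ measures the rate of turning of the tangent plane along the ruling. That is not hard, but it is exactly the content of the striction picture the paper uses, so your detour ultimately reconnects to the same geometric fact. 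For (ii) and (iii) your arguments match the paper's (Proposition~\ref{prop:2pos} for (ii), the classification of developables by their top views for (iii)); note that (ii) is indeed stated under the isometry hypothesis of (i), as you assume.
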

\begin{proof}
  Let us consider corresponding rulings (with same top view) of the
  surfaces $F_1$, $F_2$ and of $J^{-1} V$.
  These rulings together with the corresponding derivative vectors of the
  striction curve lie in the same isotropic tangent plane, illustrated by
  Figure~\ref{fig:striction}. It is therefore elementary that the
  striction $\sigma$ (Def.~\ref{defn:striction}) satisfies
  $\sigma(F_1) - \sigma(F_2) = \sigma(J^{-1} V) = \sigma(V)$.

  The curvature of the striction curve is measured in the top view and is
  therefore for all three surfaces the same. The pitch $\rho$ (cf.\
  Eqn.~\eqref{eq:lamarle} or~\cite{Sachs:1990}) is the striction divided
  by the curvature. Consequently, we have
  \begin{equation*}
    \rho(F_1) - \rho(F_2) = \rho(J^{-1} V) = \rho(V).
  \end{equation*}

  The Gaussian curvature $K$ depends on $\rho$ and for types I and II
  also on the striction distance $w$, seen in the top view as described in
  Equation~\eqref{eq:lamarle}. Thus equality of $K$ at corresponding
  points is equivalent to equality of the pitch $\rho$. 

  Now ad~\ref{itm:ruledi}: Minding isometric ruled surfaces $F_1, F_2$
  have the same Gaussian curvature $K$ and therefore $\rho(F_1) =
  \rho(F_2)$ which implies $\rho(V) = 0$. Consequently, $K(V) = 0$ and
  therefore $V$ is torsal (developable). Conversely, $\rho(V) = 0$ implies
  equal pitch and thus equal $K$ at corresponding points of $F_1$ and
  $F_2$. 

  As for~\ref{itm:ruledii}: 
  Proposition~\ref{prop:2pos} implies that $V$ is a velocity diagram of an
  infinitesimal isometry of $M$. Since $V = (-v, u, f_1 - f_2)$ it is
  also a ruled surface and therefore by
  Definition~\ref{defn:ruling-preserving-inf-iso} the infinitesimal
  isometry is ruling preserving.

  As for~\ref{itm:rulediii}: 
  The statements on the types follow immediately from the well-known
  classification of torsal ruled surfaces and their top views (as
  discussed in the section on Minding isometries starting on
  page~\pageref{par:minding-iso}, and as illustrated in
  Figure~\ref{fig:ruled-surface}).
\end{proof}

\begin{SCfigure}[2][t]
  \begin{overpic}[width=.3\textwidth]{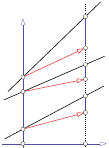}
    \put(60,3){\small$1$}
    \put(8,14){\small$F_2$}
    \put(16,32){\small$J^{-1} V$}
    \put(8,48){\small$F_1$}
    \put(59,75){\small\color{myblue}$\sigma(F_1)$}
    \put(59,49){\small\color{myblue}$\sigma(J^{-1} V)$}
    \put(59,28){\small\color{myblue}$\sigma(F_2)$}
    \put(35,73){\rotatebox{47}{\small$r_{F_1}$}}
    \put(32,48){\rotatebox{25}{\small$r_{J^{-1} V}$}}
    \put(35,26){\rotatebox{30}{\small$r_{F_2}$}}
    \put(46,58){\small\color{myred}$\dot s_{F_1}$}
    \put(41,38){\small\color{myred}$\dot s_{J^{-1} V}$}
    \put(43,15){\small\color{myred}$\dot s_{F_2}$}
  \end{overpic}
  \hspace{5mm}
  \caption{Illustration of an isotropic tangent plane of ruled surfaces
  $F_1 = (u, v, f_1)$, $F_2 = (u, v, f_2)$, and $J^{-1} V = (u, v, f_1 -
  f_2)$ which share the same top view of their rulings. The striction
  $\sigma(F_1)$ is the isotropic angle between the ruling $r_{F_1}$ and
  the tangent vector $\dot s_{F_1}$ of the arc length parametrized
  striction curve $s_{F_1}$ of $F_1$. Analogously for $F_2$ and $J^{-1} V$.
  The pitch $\rho(F_1) = \sigma(F_1)/\kappa$ is the striction divided by the
  curvature $\kappa$ of the top view of the striction curve which is the
  same for all curves with the same top view.}
  \label{fig:striction}
\end{SCfigure}

Proposition~\ref{prop:minding} immediately implies the following
corollary.

\begin{cor}
  \label{cor:minding-isometry}
  Minding isometries of a ruled surface $F$ in $I^3$ are generated by
  adding torsal ruled surfaces $R$, which have the same top view of
  rulings as $F$. The striction curve of $F$ and the regression curve of
  $R$ have the same top view. 
\end{cor}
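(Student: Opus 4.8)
The plan is to read the corollary straight off Proposition~\ref{prop:minding}, the only bookkeeping being the rotation $J$ of Equation~\eqref{eq:J} that converts the difference surface $V$ into an honest graph over the same top view as $F$. First I would fix the given ruled surface $F = (u, v, f)$ and let $\bar F = (u, v, \bar f)$ be an arbitrary ruled surface with the same top view of rulings. Setting $R := (u, v, \bar f - f)$, the difference surface, we have $\bar F = (u, v, f + (\bar f - f))$, so that $\bar F$ is obtained from $F$ precisely by adding the height function of $R$. Note that $R$ is again ruled and shares the top view of rulings with $F$, since its top view is the identity $(u, v)$. Moreover $V := (-v, u, \bar f - f) = J R$, and since $J$ is a Euclidean rotation about the $z$-axis and hence an isotropic congruence transformation, $R$ is torsal if and only if $V = J R$ is torsal.

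Next I would apply part~\ref{itm:ruledi} of Proposition~\ref{prop:minding} with $F_1 = \bar F$ and $F_2 = F$: the corresponding difference surface is exactly this $V$, and $V$ is a non-planar torsal ruled surface if and only if $F$ and $\bar F$ are related by a non-trivial Minding isometry, while a planar $V$ (equivalently a planar $R$) yields the trivial case of congruent surfaces. Combined with the previous observation, $\bar F$ is Minding-isometric to $F$ if and only if $R$ is torsal. This is exactly the claim that the Minding isometries of $F$ are generated by adding torsal ruled surfaces $R$ sharing the top view of rulings with $F$; the ``if and only if'' supplies both that every Minding-isometric partner arises this way and that every such addition produces one.

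For the final assertion on curves I would invoke the classical fact that a torsal (developable) ruled surface has its striction curve coincide with its curve of regression; in the generic type~I situation this is the edge of regression of the tangent developable furnished by part~\ref{itm:rulediii} of Proposition~\ref{prop:minding}. Since the proof of Proposition~\ref{prop:minding} already shows (Figure~\ref{fig:striction}) that the striction curves of $F_1$, $F_2$ and $J^{-1} V = R$ all share the same top view $\tilde s$, the regression curve of $R$, being its striction curve, has the same top view as the striction curve of $F$. The only steps requiring care are the passage through $J$ (using that it is an isotropic congruence, so that torsality and the top view of the striction curve are both preserved) and the identification of the striction curve of a developable with its edge of regression; the degenerate cases of type~II (cone, where the regression collapses to the apex) and type~III (cylinder, regression at infinity) are read in the obvious limiting sense. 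Everything else is immediate from Proposition~\ref{prop:minding}.
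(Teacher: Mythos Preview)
Your proposal is correct and is precisely the unpacking the paper has in mind when it says that Proposition~\ref{prop:minding} ``immediately implies'' the corollary: you reduce to part~\ref{itm:ruledi} via $R = J^{-1}V$ and read off the striction/regression statement from the common top view $\tilde s$ of the striction curves together with part~\ref{itm:rulediii}. The only (harmless) extra detail you supply beyond the paper is the explicit bookkeeping with $J$ and the identification of the striction curve of a developable with its edge of regression.
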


\begin{cor}
  \label{cor:rulingpresisom}
  A velocity diagram $V$ of a ruling preserving infinitesimal isometry of
  a ruled surface $F$ is a torsal surface. 
\end{cor}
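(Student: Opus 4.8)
The plan is to reduce the statement to Proposition~\ref{prop:minding}(i), which already recognizes the velocity diagram of a Minding isometry of two ruled surfaces as a torsal surface. Write $F = (u,v,f)$ and $V = (-v,u,n)$. The bridge from an infinitesimal isometry to an actual pair of isometric surfaces is Proposition~\ref{prop:isofromdiagr}: since $V$ is a velocity diagram of an infinitesimal isometry of $F$, the two surfaces $F^+ = (u,v,f+n)$ and $F^- = (u,v,f-n)$ are isometric in $I^3$. The strategy is to show that $F^+$ and $F^-$ are ruled surfaces with the same top view of rulings, so that Proposition~\ref{prop:minding}(i) can be applied to the pair $F_1 = F^+$, $F_2 = F^-$ and their difference surface identified with $V$ up to an affine rescaling.

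First I would record that $V$ being ruled is equivalent, after applying the isotropic congruence $J^{-1}$ (the $\pi/2$-rotation about the $z$-axis, see~\eqref{eq:J}), to $N := J^{-1}V = (u,v,n)$ being ruled; since congruences preserve both being ruled and being torsal, and since for a graph torsality is exactly the vanishing of the isotropic Gaussian curvature (cf.~\eqref{eq:graphhk}), it suffices to prove $K(V)=K(N)=n_{uu}n_{vv}-n_{uv}^2=0$. The decisive step is to use the ruling-preserving hypothesis in the form that the rulings of $F$ are carried to rulings, i.e.\ that $n$ is affine along every ruling-line of $F$, so that $F$ and $N$ share their top view of rulings. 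With this in hand, $f\pm n$ are affine along each common ruling-line, and hence $F^+$ and $F^-$ are ruled surfaces with matching rulings, i.e.\ related by a Minding isometry.

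Then Proposition~\ref{prop:minding}(i) applies to $F_1 = F^+$, $F_2 = F^-$: their difference surface is $(-v,u,(f+n)-(f-n)) = (-v,u,2n)$, which the proposition declares non-planar torsal for a non-trivial isometry and planar (hence also torsal) for a trivial one. Since $K\bigl((-v,u,2n)\bigr) = 4\,K(V)$, vanishing of one forces vanishing of the other, so $V$ is torsal and the argument closes. I would spell out the planar/trivial case separately, since there $V$ degenerates to a plane and torsality is immediate.

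The step I expect to be the main obstacle is the interpretation of \emph{ruling preserving}. The literal reading of Definition~\ref{defn:ruling-preserving-inf-iso} -- merely that $V$ is \emph{some} ruled surface -- is insufficient: one genuinely needs that the rulings of $V$ correspond to those of $F$ (up to the $\pi/2$-rotation in the top view), equivalently that $N$ and $F$ share their ruling-lines. Indeed, ``$F$ and $V$ both ruled'' together with $K(F,V)=0$ does \emph{not} force $K(V)=0$ in general (for instance over the hyperbolic paraboloid $f=uv$ the translation surface $n=u^2-v^2$ satisfies $K(F,V)=-2n_{uv}=0$, is ruled, yet has $K(V)=-4\neq 0$). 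Making precise that the ruling-preserving condition yields the shared rulings -- and therefore that $F^\pm$ are ruled with matching rulings -- is the crux on which the reduction to Proposition~\ref{prop:minding} rests.
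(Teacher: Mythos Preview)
Your reduction is precisely the paper's own argument: apply Proposition~\ref{prop:isofromdiagr} to obtain the isometric pair $F^\pm=(u,v,f\pm n)$, then invoke Proposition~\ref{prop:minding}\ref{itm:ruledi} to conclude that the difference surface (which is $V$ up to the harmless scaling $n\mapsto 2n$) is torsal. The paper's proof is shorter only because it writes ``$F_1$ and $F_2$ are isometric ruled surfaces'' in one breath, without isolating the step you single out---namely that $F^\pm$ must be ruled \emph{with the same top view of rulings} before Proposition~\ref{prop:minding} applies.

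Your flagged obstacle is real, and your counterexample is correct: with $f=uv$ and $n=u^2-v^2$ one has $K(F,V)=0$, both graphs are ruled (hyperbolic paraboloids), yet $K(V)=-4\neq 0$. So under the literal reading of Definition~\ref{defn:ruling-preserving-inf-iso} the corollary is false. The paper is tacitly using the stronger hypothesis you articulate---that the rulings of $F$ and of $J^{-1}V$ share their top view---which is exactly the standing assumption in Proposition~\ref{prop:minding} and is made explicit in the companion Corollary~\ref{cor:torsaltoisom}. With that reading your argument (and the paper's) goes through; without it, neither does.
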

\begin{proof}
  Let $V = (-v, u, n)$ be the velocity diagram of a ruling preserving
  infinitesimal isometry of the ruled surface $F = (u, v, f)$.
  Then, by Proposition~\ref{prop:isofromdiagr}, 
  $F_1 = (u, v, f + n)$ and $F_2 = (u, v, f - n)$ are isometric ruled
  surfaces.
  Proposition~\ref{prop:minding}~\ref{itm:ruledi} implies that ``$F_1 -
  F_2$'' (which is $V$) is a torsal ruled surface.
  Furthermore, by Proposition~\ref{prop:minding}~\ref{itm:ruledii} we
  obtain that ``$F_1 - F_2$'' (which is $V$) is a velocity diagram of an
  infinitesimal ruling preserving isometry of the surface ``$F_1 + F_2$''
  (which is $F$).
\end{proof}

\begin{cor}
  \label{cor:torsaltoisom}
  Let $F$ and $J^{-1} V$ be two ruled surfaces with corresponding rulings
  having the same top view. If $V$ is a torsal surface then it is a velocity
  diagram of a ruling preserving infinitesimal isometry of $F$.
\end{cor}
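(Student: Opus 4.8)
The plan is to run the argument of Corollary~\ref{cor:rulingpresisom} in reverse, so that the statement follows by combining Proposition~\ref{prop:minding} with Proposition~\ref{prop:isofromdiagr}. I would write $F = (u, v, f)$ and $J^{-1} V = (u, v, n)$, so that $V = (-v, u, n)$, and introduce the two sum surfaces $F_1 = (u, v, f + n)$ and $F_2 = (u, v, f - n)$. The goal is to show that $F_1$ and $F_2$ are isometric; Proposition~\ref{prop:isofromdiagr} then gives at once that $V$ is a velocity diagram of an infinitesimal isometry of $F$, and since $J^{-1} V$ is ruled and $J$ is an isotropic congruence, $V$ itself is ruled, so this isometry is ruling preserving by Definition~\ref{defn:ruling-preserving-inf-iso}.

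The first substantial step is to verify that $F_1$ and $F_2$ are again ruled surfaces sharing the top view of rulings with $F$, so that Proposition~\ref{prop:minding} applies to them. This is precisely where both hypotheses are used: a height function over the common family of top-view lines describes a ruled surface exactly when it restricts to an affine function along each such line, and this property is inherited by the linear combinations $f + n$ and $f - n$ of the height functions of the ruled surfaces $F$ and $J^{-1} V$.

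Next I would identify the difference surface of $F_1$ and $F_2$ in the sense of Proposition~\ref{prop:minding}, namely $(-v, u, (f + n) - (f - n)) = (-v, u, 2 n)$, which differs from $V$ only by a scaling in the isotropic direction. Such a scaling multiplies the Gaussian curvature of a graph by a constant and hence preserves the vanishing of $K$; the hypothesis that $V$ is torsal therefore makes this difference surface torsal as well. Proposition~\ref{prop:minding}~\ref{itm:ruledi} then yields that $F_1$ and $F_2$ are isometric --- non-trivially when the difference surface is non-planar, and trivially (congruent) when it is planar --- which is exactly the input needed for Proposition~\ref{prop:isofromdiagr}, closing the argument.

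I expect the main obstacle to be the bookkeeping around the ruled structure: making it precise that ``same top view of rulings'' together with the ruled property of $F$ and $J^{-1} V$ forces $f$ and $n$ to be simultaneously affine along each top-view ruling, so that $F_1$ and $F_2$ genuinely satisfy the hypotheses of Proposition~\ref{prop:minding}. It is worth emphasizing that torsality of $V$ alone does \emph{not} suffice: it only controls $K(V)$, whereas a velocity diagram requires the mixed curvature $K(F, V) = 0$, and the passage between the two is exactly what the ruled geometry encoded in Proposition~\ref{prop:minding} supplies. The only remaining delicate point is the harmless factor of two relating the difference surface $(-v, u, 2 n)$ to $V$, which is dissolved by the scale invariance of the condition $K = 0$.
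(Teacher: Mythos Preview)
Your proposal is correct and follows essentially the same route as the paper's proof: introduce $F_1 = (u,v,f+n)$ and $F_2 = (u,v,f-n)$, use Proposition~\ref{prop:minding}\ref{itm:ruledi} (torsal difference surface $\Leftrightarrow$ Minding isometric) to get $F_1, F_2$ isometric, and then conclude that $V$ is a ruling-preserving velocity diagram of the middle surface $F$. The paper packages the last step via Proposition~\ref{prop:minding}\ref{itm:ruledii} rather than Proposition~\ref{prop:isofromdiagr}, but these amount to the same thing; your version is in fact more explicit about two points the paper glosses over, namely that $F_1, F_2$ inherit the ruled structure from $F$ and $J^{-1}V$, and the harmless factor $2$ in the difference surface.
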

\begin{proof}
  Let $F = (u, v, f)$ and $V = (-v, u, n)$ and let furthermore 
  $F_1 = (u, v, f + n)$ and $F_2 = (u, v, f - n)$.
  By Proposition~\ref{prop:minding}~\ref{itm:ruledi} $F_1$ and $F_2$ are
  related by a non-trivial isotropic Minding isometry. Consequently,
  Proposition~\ref{prop:minding}~\ref{itm:ruledii} implies that $V$ is the
  velocity diagram of a ruling preserving isometry of $F$.
\end{proof}

Proposition~\ref{prop:isometricdual} implies the following
characterization of Minding isometries in terms of metric duality.
\begin{prop}
  Two ruled surfaces $G, \bar G$ are metric duals of a pair of Minding
  isometric ruled sufaces if and only if $G$ and $\bar G$ have parallel
  corresponding rulings, the map of corresponding points is area
  preserving and the tangent planes at corresponding points are parallel.
\end{prop}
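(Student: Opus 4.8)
The plan is to read both directions off Proposition~\ref{prop:isometricdual} together with the behaviour of lines and parallelism under the null polarity $\nu$; I work with $\nu$ throughout, the case of $\delta$ being identical up to the $\pi/2$-rotation of top views from Lemma~\ref{lem:topviewoflines}. Throughout I parametrize all surfaces by the same $(u,v)$, so that corresponding points share parameters, and I recall that a correlation sends lines to lines, so the metric dual of a ruled surface is again ruled with rulings corresponding under $\nu$.

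For the direction ``$\Rightarrow$'', let $F,\bar F$ be Minding isometric ruled surfaces; by the setup of Proposition~\ref{prop:minding} they are arranged as parallel points (common top view) with corresponding rulings lying in common isotropic planes, i.e.\ with the same top view. Put $G=\nu(F)$ and $\bar G=\nu(\bar F)$. Since $F,\bar F$ are isometric, Proposition~\ref{prop:isometricdual} immediately gives that $G,\bar G$ have parallel tangent planes at corresponding points and that the correspondence is area preserving. Finally, by the observation in the discussion of the metric dual of Minding isometries (lines with the same top view correspond under $\nu$ to parallel lines), the corresponding rulings of $G$ and $\bar G$ are parallel.

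For ``$\Leftarrow$'', suppose $G,\bar G$ are ruled surfaces with parallel corresponding rulings, parallel tangent planes at corresponding points, and an area-preserving correspondence. Because $\nu$ is involutive, $F:=\nu(G)$ and $\bar F:=\nu(\bar G)$ are ruled surfaces with $G=\nu(F)$, $\bar G=\nu(\bar F)$. The dual of the parallelism criterion of Section~\ref{ssec:metricdual} shows that $\nu$ sends two non-isotropic planes to points on a common isotropic line exactly when the planes are parallel; hence the parallel tangent planes of $G,\bar G$ dualize to corresponding points of $F,\bar F$ lying on common isotropic lines, so $F$ and $\bar F$ have the same top view, and in particular their corresponding rulings share a top view (consistently, the hypothesis that the rulings of $G,\bar G$ are parallel dualizes to this same-top-view statement via Lemma~\ref{lem:topviewoflines}). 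It remains to convert area preservation into equality of Gaussian curvature: by \eqref{eq:karea} and its $\nu$-analogue the isotropic top-view area density of $\nu(F)=G$ is $|K_F|$ and that of $\bar G$ is $|K_{\bar F}|$, so area preservation forces $|K_F|=|K_{\bar F}|$ pointwise; since ruled surfaces satisfy $K\le 0$ by \eqref{eq:lamarle}, this upgrades to $K_F=K_{\bar F}$, i.e.\ $F$ and $\bar F$ are isometric (Definition~\ref{defn:isometric}). Two isometric ruled surfaces whose corresponding rulings have the same top view are Minding isometric by Proposition~\ref{prop:minding}~\ref{itm:ruledi} (equal $K$ forces the difference surface to have vanishing pitch, hence to be torsal), which is the required conclusion.

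The step I expect to be the main obstacle is the conversion between area preservation and equal Gaussian curvature in the converse: one must pin down that the area-preserving correspondence is the parameter-identity map induced by the common top view, identify the top-view area density of the $\nu$-dual with $|K|$ correctly (the lemma of \eqref{eq:karea} is phrased for $\delta$, and one invokes its stated $\nu$-analogue), and use $K\le 0$ for ruled surfaces to pass from $|K_F|=|K_{\bar F}|$ to $K_F=K_{\bar F}$. The remaining steps are bookkeeping through Lemma~\ref{lem:topviewoflines}, the parallelism dictionary of Section~\ref{ssec:metricdual}, and Proposition~\ref{prop:isometricdual}.
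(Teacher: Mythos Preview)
Your argument is correct and follows the same route the paper intends: the paper's ``proof'' is the single sentence ``Proposition~\ref{prop:isometricdual} implies the following characterization,'' and you unpack precisely that implication using the dictionary between parallel points/planes and $\nu$, Lemma~\ref{lem:topviewoflines} for the rulings, and the area computation from the proof of Proposition~\ref{prop:isometricdual}. Your treatment of the converse---identifying the top-view area density of $\nu(F)$ with $|K_F|$ and then using $K\le 0$ for ruled surfaces from \eqref{eq:lamarle} to remove the absolute value---fills in a step the paper leaves to the reader, since Proposition~\ref{prop:isometricdual} is stated only in the forward direction.
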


\begin{figure}[t]
  \centerline{%
  \begin{overpic}[width=.9\textwidth]{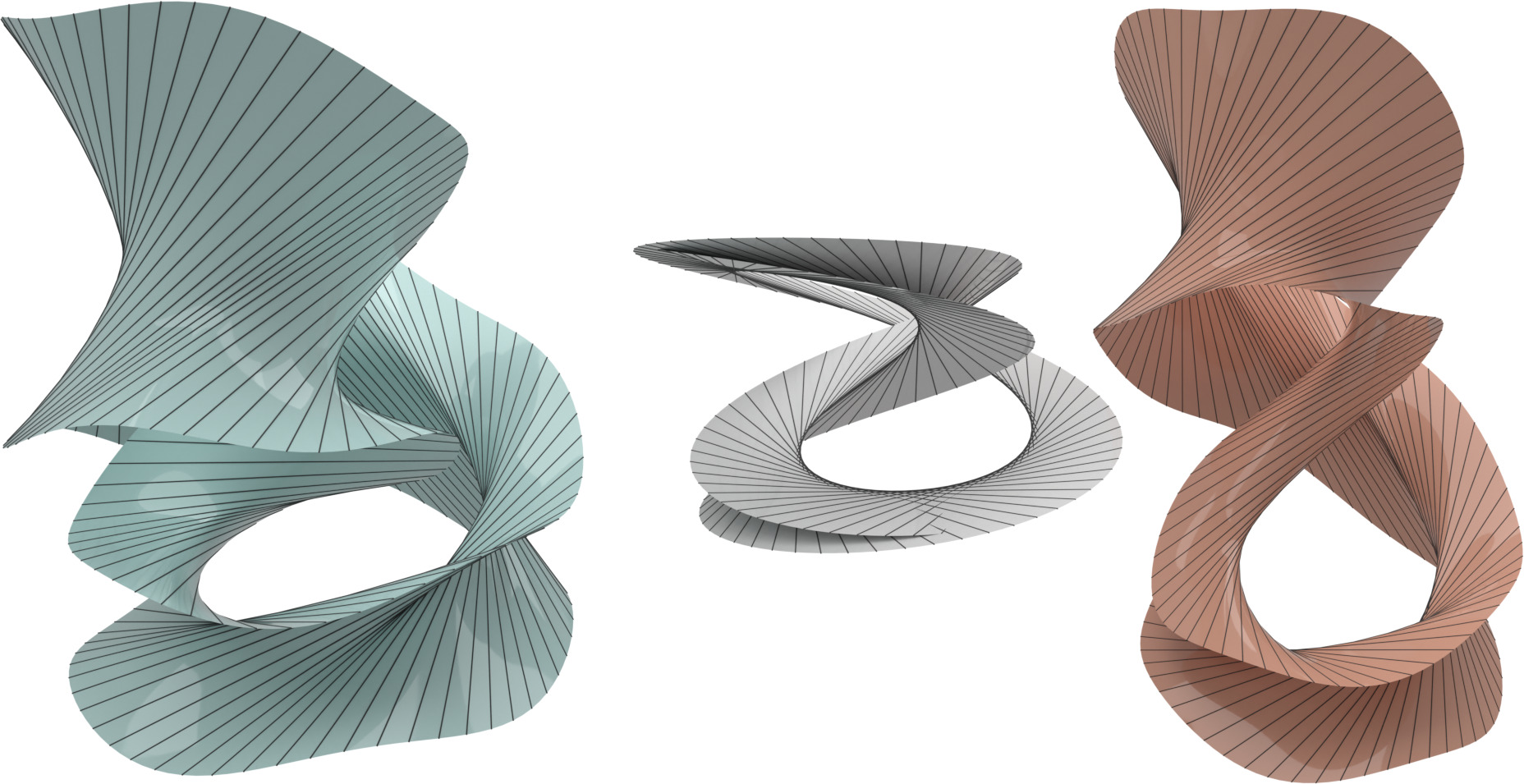}
    \put(25,0){$F$}
    \put(57,12){$R$}
    \put(75,0){$\bar F$}
  \end{overpic}}
  \centerline{%
  \begin{overpic}[width=.9\textwidth]{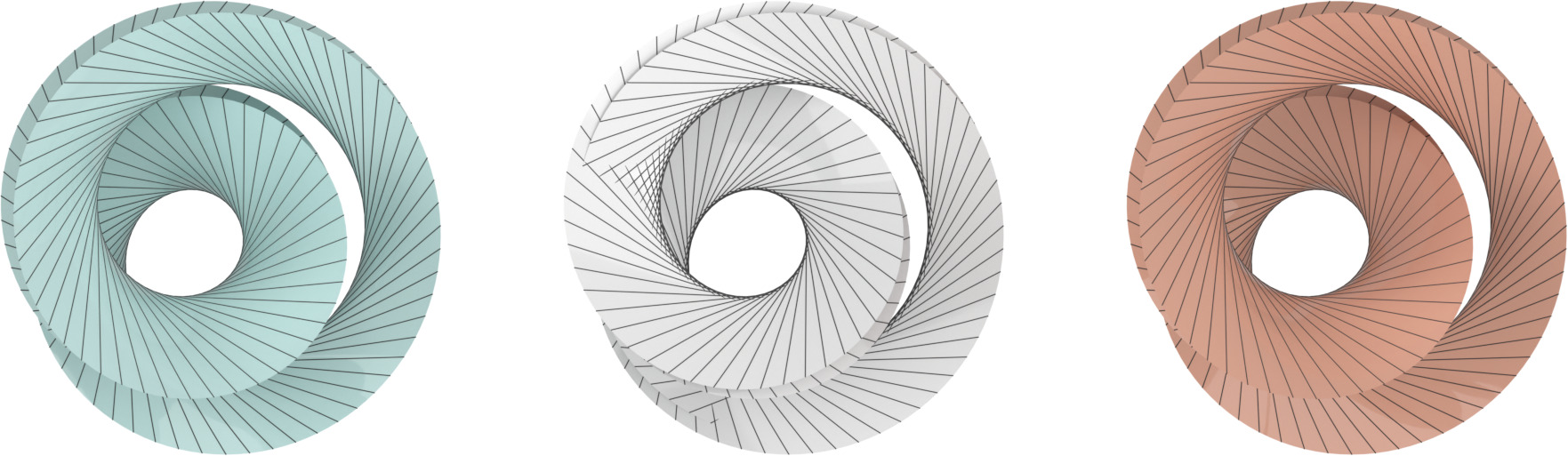}
  \end{overpic}}
  \caption{Minding isoemtry. Corollary~\ref{cor:minding-isometry} implies
  that we obtain two isometric ruled surfaces $F, \bar F$ (\emph{left} and
  \emph{right}) by adding a torsal ruled surface $R$ (\emph{center}) to
  $F$ with the same top view of rulings.
  The top views of the three surfaces (\emph{bottom-row}) are identical.}
\end{figure}

\paragraph{Discrete model of Minding isometries in $I^3$} 
\label{par:discreteminding}

It is instructive to discuss a discrete model of these Minding isometries,
in analogy to the Euclidean case (see~\cite{pottwall:2001,sauer:1970}). 
Let $r(t)$ denote the rulings of a ruled surfaces and consider two
neighbouring rulings $r(t), r(t + \varepsilon)$. Below we will always
assume $\varepsilon$ to be sufficiently close to $0$.
For type~I or~II, the top views $\tilde r(t), \tilde r(t + \varepsilon)$ 
intersect in a point $\tilde s(t)$. The isotropic line $N(t)$ through that
point intersects $r(t), r(t + \varepsilon)$ in points $s(t), s(t +
\varepsilon)$ with vertical distance $d(t)$ (which can be defined to be
the distance of the two rulings). The limit of this distance, divided by
the angle between the rulings, when $\varepsilon \to 0$ on a $C^2$
ruled surface, is the pitch $\rho(t)$ (cf.~\cite[p.~199]{Sachs:1990}). 

Consider now a discrete sequence of rulings $(r_i)_{i \in \Z}$
forming a discrete ruled surface.
A congruence transformation in $I^3$ which fixes the top view is an
isotropic shearing and just adds a linear function in isotropic direction.
This changes rulings in their respective isotropic planes but keeps their
pairwise isotropic distances $d_i$ and angles $\phi_i$ and thus the
discrete pitch $\rho_i = d_i/\phi_i$. Also discrete striction distances
$w$ do not change and thus the Gaussian curvature $K = -\rho_i^2/w^4$
(cf.~\eqref{eq:lamarle}), remains unchanged. Consequently, applying
arbitrarily many such shearings yields discrete isometric ruled surfaces.
Furthermore, the difference surface between two isometric discrete ruled
surfaces created in this way has coplanar consecutive rulings, and thus is
a discrete developable surface (in analogy to the smooth property of
Corollary~\ref{cor:minding-isometry}). We will get back to this discussion
in Section~\ref{ssec:rigidfaces}.

\subsection{From Euclidean to isotropic diagrams} 
\label{ssec:e-to-i}

A surface in equilibrium without external forces acting on its interior
has many Airy stress surfaces. We can take any direction $e_3$ as the one
in which we orthogonally project onto a plane, yielding a planar stress
state and a stress surface. Equivalent to that is the introduction of an
isotropic direction and the transfer from a Euclidean infinitesimal
isometry to the generation of associated isotropic stress surfaces.

Consider a Euclidean infinitesimal isometry of a surface $F^e$ in $E^3$
and associated diagrams $V^e, C^e, \bar C^e$, which fulfill the relation
$V^e = \bar C^e + C^e \times F^e$. 
Our goal is to generate diagrams $F, V, C, \bar C$ which share the same
transformation relations as in Theorem~\ref{thm:diagrams}
\ref{itm:isoi}-\ref{itm:isoiv} and illustrated in
Figure~\ref{fig:relations}. 
Now we introduce an isotropic direction $e_3$ and extend it to a Cartesian
frame $T = (e_1, e_2, e_3) \in \R^{3 \times 3}$.
Let us set $F = F^e$ and $C = T J^{-1} T^{-1} C^e$ so that in the frame
$T$ we have that $F$ and $J C$ have parallel tangent planes (since $F^e$
and $C^e$ have parallel tangent planes). 
We generate an isotropic velocity vector
field $V = (-\la F, e_2\ra, \la F, e_1\ra, n)$, where
\begin{equation}
  \label{eq:n}
  n = \la V^e, e_3\ra = \la \bar C^e, e_3\ra + \det(C, F, e_3).     
\end{equation}
In view of the symmetry of surface and velocity diagram $(F^e, V^e)$, (or
by simply reversing the above equation) we obtain 
$\bar C^e = V^e + F^e \times C^e$ and use it to define $\bar C$ via
\begin{equation*}
  \bar C = (\la C, e_2\ra, -\la C, e_1\ra, \bar c_3),
  \quad\text{where}\quad
  \bar c_3 := -\la \bar C^e, e_3 \ra = -n - \det(F^e, C^e, e_3). 
\end{equation*}
Consequently, the pairs $(F, V)$ and $(C, \bar C)$ are I-orthogonally
related in the frame $T$ (Def.~\ref{defn:orthogonally}).

\begin{prop}
  For any Euclidean infinitesimal isometry with diagrams $F^e$, $V^e$,
  $C^e$, $\bar C^e$ and any unit vector $e^3 \in \R^3$ the derived
  diagrams $F, V, C, \bar C$ (as above) fulfill the same relations as in
  Theorem~\ref{thm:diagrams} \ref{itm:isoi}-\ref{itm:isoiv} and
  illustrated in Figure~\ref{fig:relations}. 
\end{prop}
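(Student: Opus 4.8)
The plan is to prove the proposition by showing that, once everything is written in the Cartesian frame $T = (e_1, e_2, e_3) \in \SO(3)$, the four derived diagrams $F, V, C, \bar C$ are \emph{literally} the canonical isotropic diagrams~\eqref{eq:diagrams} of Theorem~\ref{thm:diagrams}; the asserted relations then follow from that theorem with nothing left to check. First I would pass, without loss of generality, to the graph parametrization $u = \la F^e, e_1\ra$, $v = \la F^e, e_2\ra$, $f = \la F^e, e_3\ra$ (admissible because $e_3$ is the isotropic direction, hence never tangent to $F^e$), so that $F = (u, v, f)$ in the frame $T$. Since the construction sets $n = \la V^e, e_3\ra$ as in~\eqref{eq:n} and $V = (-\la F, e_2\ra, \la F, e_1\ra, n)$, we obtain $V = (-v, u, n)$ at once, i.e.\ $F$ and $V$ already have the canonical shape (with $k = -1$).

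The one non-trivial input is the pointwise rigid-motion characterization of a Euclidean infinitesimal isometry, $V^e_u = C^e \times F^e_u$ and $V^e_v = C^e \times F^e_v$, which is exactly the frame relation $\frac{\partial}{\partial t}\alpha(F_u, t)\big|_{t = 0} = V_u$ from p.~\pageref{frames}. Using $F^e_u = e_1 + f_u e_3$, $F^e_v = e_2 + f_v e_3$ and $\det(e_1, e_2, e_3) = 1$, taking $e_3$-components gives $n_u = \la V^e_u, e_3\ra = \det(C^e, e_1, e_3) = -\la C^e, e_2\ra$ and $n_v = \la V^e_v, e_3\ra = \det(C^e, e_2, e_3) = \la C^e, e_1\ra$, so $\la C^e, e_1\ra = n_v$ and $\la C^e, e_2\ra = -n_u$. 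Because conjugation by $T J^{-1} T^{-1}$ rotates the first two frame-components by $-\pi/2$, the frame-$T$ coordinates of $C = T J^{-1} T^{-1} C^e$ are $(-n_u, -n_v, c)$ with $c := \la C^e, e_3\ra$ --- the canonical rotation diagram; and then $\bar C = (\la C, e_2\ra, -\la C, e_1\ra, \bar c_3) = (-n_v, n_u, \bar c_3)$. Taking the $e_3$-component of $V^e = \bar C^e + C^e \times F^e$ with $F^e = u e_1 + v e_2 + f e_3$ yields $\la \bar C^e, e_3\ra = n - n_u u - n_v v$, hence $\bar c_3 = -n + n_u u + n_v v$ and $\bar C$ is the canonical translation diagram. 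Moreover, since the $\pi/2$-rotation $T J T^{-1}$ about $e_3$ sends $C$ back to $C^e$ while $F = F^e$, the parallel-tangent-plane relation is just the classical Euclidean fact that $F^e$ and $C^e$ share tangent planes (which itself follows from $C_u \times F_v = C_v \times F_u$).

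Finally I would verify the compatibility conditions required by Theorem~\ref{thm:diagrams}. Computing the $e_1$- and $e_2$-components of $V^e_u = C^e \times F^e_u$ and $V^e_v = C^e \times F^e_v$ (now using $\la C^e, e_1\ra = n_v$, $\la C^e, e_2\ra = -n_u$) and imposing the integrability of the two genuine functions $\la V^e, e_1\ra$, $\la V^e, e_2\ra$ produces precisely $c_u = f_u n_{uv} - f_v n_{uu}$ and $c_v = f_u n_{vv} - f_v n_{uv}$, i.e.\ Equation~\eqref{eq:g}. By Lemma~\ref{lem:g} the mere existence of such a $c$ (namely $c = \la C^e, e_3\ra$) forces $c_{uv} = c_{vu}$, which is equivalent to $K(F, V) = f_{uu} n_{vv} - 2 f_{uv} n_{uv} + f_{vv} n_{uu} = 0$, establishing Equation~\eqref{eq:inf}. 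With all four diagrams in canonical form and both compatibility conditions in hand, Theorem~\ref{thm:diagrams} applies verbatim and delivers relations~\ref{itm:isoi}--\ref{itm:isoiv} (I-orthogonality of $(F,V)$ and $(C,\bar C)$, the $\delta$-duality of $V$ and $\bar C$, and the parallelism of the tangent planes of $F$ and $JC$). The only real difficulty is bookkeeping: one must keep $T$ oriented and carefully separate the standard $z$-axis rotation $J$ from the frame rotation $T J T^{-1}$ about $e_3$, so that the conjugation in the definition of $C$ exactly cancels the $J$ appearing in the canonical forms.
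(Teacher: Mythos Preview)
Your argument is correct, but it follows a different route from the paper's own proof. The paper treats items~\ref{itm:isoi} and~\ref{itm:isoiv} (I-orthogonality and the parallel tangent planes of $F$ and $JC$) as immediate from the very definitions of $V$, $C$, $\bar C$, and then verifies only the remaining relation~\ref{itm:isov} by a direct computation: it finds a Euclidean normal $V_u\times V_v$ to $V$, writes down the equation of the tangent plane of $V$, and checks that its $\delta$-image is the point $\bar C$. Your strategy is instead to identify all four derived diagrams with the canonical formulas~\eqref{eq:diagrams} in the frame $T$ and then invoke Theorem~\ref{thm:diagrams} wholesale. The key step that makes this work---and that has no counterpart in the paper's proof---is your derivation of~\eqref{eq:g} for $c=\la C^e,e_3\ra$ by reading off the $e_1$- and $e_2$-components of $V^e_u=C^e\times F^e_u$, $V^e_v=C^e\times F^e_v$ and imposing $(\la V^e,e_1\ra)_{uv}=(\la V^e,e_1\ra)_{vu}$, $(\la V^e,e_2\ra)_{uv}=(\la V^e,e_2\ra)_{vu}$; this is correct and, via Lemma~\ref{lem:g}, also furnishes~\eqref{eq:inf}. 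Your approach has the pleasant by-product of delivering item~\ref{itm:isoiii} (that $\bar C$ is a velocity diagram for an infinitesimal isometry of $C$) for free, which lies outside the stated range \ref{itm:isoi}--\ref{itm:isoiv} but is a natural strengthening. Two small points: the passage to a graph parametrization over $(e_1,e_2)$ tacitly assumes that $F^e$ is admissible for the chosen isotropic direction $e_3$, which you should state as a standing hypothesis; and the insistence on $T\in\SO(3)$ is indeed needed for the sign conventions in $J$ and $\nu/\delta$ to match, so your closing remark about keeping $T$ oriented is to the point.
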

\begin{proof}
  What remains to show is that the resulting diagrams $V$ and $\bar C$ are
  related by metric duality $\delta $in $I^3$. 
  For simplicity, we take a Cartesian system with the canonical basis
  vectors $e_i$ and consider the surface 
  $F = F^e = (f_1, f_2, f_3)$, 
  $V = (n_1, n_2, n_3)$, 
  $C = (c_1, c_2, c_3)$, 
  $\bar C = (\bar c_1, \bar c_2, \bar c_3)$.
  Consequently, 
  \begin{equation*}
    V = (-f_2, f_1, n_3),
    \quad 
    \bar C = (c_2 , -c_1, \bar c_3),
    \quad 
    C^e = (-c_2 , c_1, c_3),
  \end{equation*}
  where $n_3 = n \overset{\eqref{eq:n}}{=} -\bar c_3 - c_1 f_1 - c_2 f_2$.
  Since $C^e$ also acts on tangent vectors of $F = F^e$, we
  have~\cite{sauer:1970}
  \begin{equation*}
    V^e_u = C^e \times F_u,
    \quad
    V^e_v = C^e \times F_v,
  \end{equation*}
  which by Equation~\eqref{eq:n} yields for the third coordinates
  \begin{equation*}
    n_u = -c_2 f_{2, u} - c_1 f_{1, u}
    \quad\text{and}\quad
    n_v = -c_2 f_{2, v} - c_1 f_{1, v}.
  \end{equation*}
  Using these relations we find a Euclidean normal vector at $V(u, v)$,
  \begin{equation*}
    V_u \times V_v = \ldots = (- c_2 d, c_1 d, d), 
    \quad\text{with}\quad
    d = f_{1, u} f_{2, v} - f_{1, v} f_{2, u}.
  \end{equation*}
  Hence, the equation of the tangent plane at $V(u, v)$ is
  \begin{equation*}
    c_2 x - c_1 y - z = -f_2 c_2 - f_1 c_1 - n_3 = \bar c_3.
  \end{equation*}
  Its image under the metric duality $\delta$ is the point 
  $(c_1, c_2, \bar c_3) \in \bar C$.
\end{proof}

\subsection{Relative minimal surfaces}

We will use in the following the Euclidean rotation around the $z$-axis
about an angle of $\pi/2$ composed with a reflection in the $xy$-plane
and denote it by
\begin{equation}
  \label{eq:L}
  L =
  \left(\!
  \begin{array}{ccc}
     \!   \hphantom{-}0  &  1   & \! \hphantom{-}0\\
     \!   -1             &  0   & \! \hphantom{-}0\\
     \!   \hphantom{-}0  &  0   & \! -1
  \end{array}
  \!\right).
\end{equation}
Applying $L$ to a contact element $E = (x, y, z, p, q)$ yilds $L E = (y,
-x, -z, -q ,p)$.

We briefly elaborate on a relation between $F$ and $L C$, namely being
\emph{relative minimal surfaces} of each other. Furthermore,
we show that the relative principal curvatures in this generalized
differential geometric perspective are closely related to the isotropic
Gaussian curvature $K$ of $V$ or $\bar C$. The surfaces $F$ and $L C$ have
contact elements
\begin{equation*}
  F = (u, v, f, f_u, f_v)
  \quad\text{and}\quad
  L C = (-n_v, n_u, -c, f_u, f_v).
\end{equation*}
Therefore, they have parallel tangent planes at corresponding points.
These tangent planes are spanned by
$F_u = (1, 0, f_u)$, $F_v = (0, 1, f_v)$
and 
\begin{equation*} 
  L C_u = (-n_{uv}, n_{uu}, -n_{uv} f_u + n_{uu} f_v),
  \qquad
  L C_v = (-n_{vv}, n_{uv}, -n_{vv} f_u + n_{uv} f_v),
\end{equation*} 
respectively. Obviously, we have
\begin{equation}
  \label{eq:relwein}
  L C_u = -n_{uv} F_u + n_{uu} F_v, 
  \qquad
  L C_v = -n_{vv} F_u + n_{uv} F_v, 
\end{equation}
which again confirms parallelism of tangent planes. 
Furthermore, Equation~\eqref{eq:relwein} yields an affine mapping between
corresponding parallel tangent planes of $F$ and $L C$ via $(F_u, F_v)
\mapsto (L C_u, L C_v)$.
If $L C$ takes the role of the normalization map (i.e., the Gauss map
in the relative differential geometry), then Equation~\eqref{eq:relwein}
describes the relative Weingarten map (depending on the definition,
possibly up to a sign, which is not important in our context). Its
matrix,
\begin{equation*}
  W 
  = 
  \left(\!\begin{array}{ll} 
    -n_{uv} & n_{uu} \\ 
    -n_{vv} & n_{uv} 
  \end{array}\!\right),
\end{equation*}
has vanishing trace, and thus we have vanishing relative mean curvature.
This implies the following proposition.
\begin{prop}
  \label{prop:relmin}
  Let $F$ be an infinitesimally flexible surface $F$ and $C$ a rotation
  diagram. Then $F$ and $L C$ are relative minimal surfaces to
  each other.   
\end{prop}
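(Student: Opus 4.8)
The plan is to obtain the proposition as an immediate consequence of the relative Weingarten map already displayed in Equation~\eqref{eq:relwein}. Recall that in relative differential geometry a surface $x$ equipped with a relative normalization $y$ (a second surface whose tangent plane at each corresponding point is parallel to that of $x$) carries a relative shape operator, namely the linear endomorphism of the common tangent plane sending the frame $(x_u, x_v)$ to $(y_u, y_v)$; the surface is \emph{relative minimal} precisely when this operator has vanishing trace, i.e.\ vanishing relative mean curvature. The preceding computation has already verified that $F$ and $LC$ share the Euclidean normal $(f_u, f_v, -1)$, so $LC$ is a legitimate relative normalization of $F$ and the relative shape operator is exactly the matrix $W = \left(\begin{smallmatrix} -n_{uv} & n_{uu} \\ -n_{vv} & n_{uv}\end{smallmatrix}\right)$ recorded above.

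The key step is then purely a matter of reading off a trace. From Equation~\eqref{eq:relwein} the trace of $W$ is $-n_{uv} + n_{uv} = 0$, so the relative mean curvature of $F$ vanishes and $F$ is relative minimal with respect to $LC$; this direction holds unconditionally. For the symmetric claim I would swap the roles of surface and normalization: the shape operator of $LC$ with normalization $F$ is the inverse map $W^{-1}$, and for a $2 \times 2$ matrix $\operatorname{tr}(W^{-1}) = \operatorname{tr}(W)/\det(W)$. Since $\det W = n_{uu} n_{vv} - n_{uv}^2 = K(V)$ is nonzero for an admissible $V = (-v, u, n)$, we obtain $\operatorname{tr}(W^{-1}) = 0$ as well, so $LC$ is relative minimal with respect to $F$, completing the mutual statement.

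No genuine computational obstacle remains once Equation~\eqref{eq:relwein} is granted. The only point deserving care is the bookkeeping of conventions: one must confirm that interchanging surface and normalization really inverts the shape operator (so that the phrase ``to each other'' is automatic rather than an additional hypothesis), and that the fixed sign normalization of the relative shape operator makes ``vanishing trace'' coincide with ``relative minimal''. The latter matters because, as the surrounding text notes, the relative Weingarten map is determined only up to a sign; fortunately a global sign change leaves the trace condition untouched, so the conclusion is insensitive to this ambiguity.
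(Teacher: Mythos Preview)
Your argument is correct and follows essentially the same route as the paper: the proposition is stated immediately after the computation of the relative Weingarten matrix $W$ in Equation~\eqref{eq:relwein}, and the paper's entire proof is the observation that $W$ has vanishing trace. Your additional remark that swapping surface and normalization inverts $W$, together with $\operatorname{tr}(W^{-1}) = \operatorname{tr}(W)/\det(W) = 0$, makes the ``to each other'' clause explicit where the paper simply asserts it (``It does not matter whether we interpret $F$ or $LC$ as relative Gauss map''); this is a welcome clarification rather than a different approach.
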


It does not matter whether we interpret $F$ or $L C$ as relative Gauss
map. If one of the two surfaces is a Euclidean sphere then the other one
is a Euclidean minimal surface. This mechanical interpretation of relative
minimal surfaces has been pointed out for the Euclidean
case~\cite[p.~245]{Blaschke1} and for general relative minimal surfaces it
follows immediately from~\cite[p.~205]{Blaschke2}. 

\begin{prop} 
  \label{prop:dualrelmin}
  Metric duality $\nu$ maps a pair of relative minimal surfaces (surfaces
  in equilibrium which are reciprocal force diagrams of each other)
  to a pair of dual relative minimal surfaces (equilibrium surface and its
  Airy surface).
\end{prop}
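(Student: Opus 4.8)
The plan is to reduce the pair to the canonical form furnished by Proposition~\ref{prop:relmin} and then transport, through $\nu$, the relation that characterises each family. First I would invoke Proposition~\ref{prop:relmin} to write an arbitrary pair of relative minimal surfaces, up to isotropic congruence, as $(F, LC)$, where $F = (u,v,f)$ is infinitesimally flexible with velocity diagram $V = (-v, u, n)$ and rotation diagram $C = (-n_u, -n_v, c, f_v, -f_u)$; here $f$ and $n$ satisfy \eqref{eq:inf} and $c$ is the function of Lemma~\ref{lem:g}. Both $F$ and $LC$ carry the normal direction $(f_u, f_v, -1)$, so they have parallel tangent planes at corresponding points, and since $\nu$ interchanges parallel planes with parallel points (Section~\ref{ssec:metricdual}), the images $\nu(F)$ and $\nu(LC)$ are parallel points, hence graphs over one common top view. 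Applying \eqref{eq:contact2} gives $\nu(F) = (f_v, -f_u, f - f_u u - f_v v, -v, u)$ and $\nu(LC) = (f_v, -f_u, -c + f_u n_v - f_v n_u, -n_u, -n_v)$, both of which project to the top view $(f_v, -f_u)$.

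The substance of the proof is to check that these two graphs over the common top view have vanishing mixed Gaussian curvature, since by the reading of Pucher's equation \eqref{eq:pucher} with vanishing load this is exactly the relation between an equilibrium surface and its Airy stress surface. I would obtain it through the mixed-area dictionary of Corollary~\ref{cor:mixed}: $K(\nu(F), \nu(LC)) \equiv 0$ is equivalent to the identical vanishing of the mixed area of the $\delta$-duals $\delta(\nu(F))$ and $\delta(\nu(LC))$. Because $\nu$ is an involution, $\delta\circ\nu$ is the isotropic congruence $L$ of \eqref{eq:L} (the observation following Lemma~\ref{lemma1}), so these duals are the congruent copies $L(F)$ and $L(LC)$; as mixed area is invariant under isotropic congruences up to sign, the condition reduces to $\area(F, LC) = 0$. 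The main obstacle is precisely this last identity: for a surface and its relative Gauss map the mixed area equals, up to the nonzero factor $\det(\nabla^2 h_F) = 1/K(F)$ (Proposition~\ref{prop:curvature}), the trace of the relative Weingarten map $W$ of \eqref{eq:relwein} (equivalently, the relative mean curvature). That trace is $-n_{uv} + n_{uv} = 0$, i.e. the relative minimality guaranteed by Proposition~\ref{prop:relmin}; hence $\area(F, LC) = 0$ and therefore $K(\nu(F), \nu(LC)) \equiv 0$.

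It then remains to name the result. Two graphs over a common top view with vanishing mixed Gaussian curvature are, by \eqref{eq:pucher} with $L = 0$, a surface in equilibrium together with its Airy stress surface, and the induced planar correspondence is area preserving by the paratactic correspondence of Section~\ref{ssec:paratactic} (cf.\ also Proposition~\ref{prop:isometricdual}). Thus $\nu$ carries the relative minimal pair $(F, LC)$, the two reciprocal force diagrams in equilibrium, to the dual relative minimal pair formed by an equilibrium surface and its Airy surface, which is the assertion. Finally I would remark that repeating the computation with $\delta$ in place of $\nu$ (now using $\delta\circ\delta = \mathrm{id}$) yields the same conclusion, so the particular metric duality chosen is immaterial.
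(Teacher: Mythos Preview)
The paper does not actually prove this proposition; it is stated immediately after remarking that the mechanical interpretation of relative minimal surfaces ``follows immediately from~[Blaschke2, p.~205]'' and is then used as input for Theorem~\ref{thm:darbouxwreath-ii}\,(iv). So there is no argument in the paper to compare against, and your proposal must stand on its own.

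Your overall plan is reasonable: pass to the canonical pair $(F,LC)$, apply $\nu$, and verify the vanishing of the mixed Gaussian curvature of the images. Two steps, however, are not justified. First, the composite $\delta\circ\nu$ is $L^{-1}$, not $L$ (check it on a contact element using Lemma~\ref{lemma1}); this by itself is harmless since $L^{-1}$ is still an isotropic congruence. The real problem is the reduction ``$K(\nu(F),\nu(LC))=0\iff\area(F,LC)=0$''. Corollary~\ref{cor:mixed} converts $K(\Phi,\Psi)=0$ into the vanishing of the mixed area of $\delta(\Phi^{t})$, where $\Phi^{t}$ is the \emph{sum surface} $(s,t,\phi+t\psi)$. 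The map $\delta$ is not linear in this sense: $\delta(\Phi+t\Psi)\neq\delta(\Phi)+t\,\delta(\Psi)$, so you cannot replace $\delta(\nu(F)+t\,\nu(LC))$ by $L^{-1}(F+t\,LC)$ and then invoke congruence invariance of mixed area. Consequently the passage to ``$\area(F,LC)=0$'' is unsupported, and the subsequent identification of that mixed area with $\mathrm{tr}\,W$ (up to a factor) is likewise asserted without proof; nothing in the paper establishes that Steiner--type identity in the smooth isotropic setting.

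The computation you are aiming for can be carried out directly and succeeds. Write $\nu(F)$ and $\nu(LC)$ as graphs over $(s,t)=(f_v,-f_u)$; the contact elements~\eqref{eq:contact2} give $\phi_s=-v$, $\phi_t=u$ and $\psi_s=-n_u$, $\psi_t=-n_v$. Using $\partial_s=\tfrac{1}{K}(-f_{uv}\partial_u+f_{uu}\partial_v)$, $\partial_t=\tfrac{1}{K}(-f_{vv}\partial_u+f_{uv}\partial_v)$ with $K=f_{uu}f_{vv}-f_{uv}^2$, one finds
\[
\phi_{ss}\psi_{tt}-2\phi_{st}\psi_{st}+\phi_{tt}\psi_{ss}
=\frac{f_{uv}}{K^{2}}\bigl(f_{uu}n_{vv}-2f_{uv}n_{uv}+f_{vv}n_{uu}\bigr)
=\frac{2f_{uv}}{K^{2}}\,K(F,V)=0,
\]
which is exactly the dual relative minimal condition. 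If you replace your mixed--area detour by this change of variables, the argument closes.
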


For the dual viewpoint of relative differential geometry in the context of
statics we point to Vouga et al.~\cite{vouga-2012-sss}. 

\begin{rem}
  Equation~\eqref{eq:inf} characterizes dual relative minimal surfaces in
  the graph representation. Hence, there is also an analogous
  \emph{characterization of relative minimal surfaces in terms of the
  isotropic support functions}. Two such surfaces $R, S$ have parallel
  tangent planes at corresponding points, written as 
  $z = u x + v y - r(u, v)$ and $z = u x + v y - s(u, v)$. Then
  application of metric duality and our discussions in
  Section~\ref{ssec:metricdual} show that the support functions $r, s$
  of relative minimal surfaces $R, S$ satisfy %
  \begin{equation*}
    r_{uu} s_{vv} - 2 r_{uv} s_{uv} + r_{vv} s_{uu} = 0. 
  \end{equation*}
\end{rem}

By definition, \emph{relative principal curvatures}
$\kappa_1^{\mathrm{rel}}, \kappa_2^{\mathrm{rel}}$ and \emph{relative
principal curvature directions} are eigenvalues and eigenvectors of $W$.
We find
\begin{equation} 
  \label{eq:relprinccurv}
  \kappa_i^{\mathrm{rel}} 
  = 
  \pm \sqrt{n_{uv}^2 - n_{uu} n_{vv}} 
  = 
  \pm \sqrt{-K}, 
\end{equation}
where $K$ is the isotropic Gaussian curvature of $V$ at $(u, v, n(u, v))$.
A similar relation appears in Euclidean differential geometry where, in
comparison, $\pm \sqrt{-K}$ are the torsions of the asymptotic
curves~\cite{strubecker3} if their curvature is not vanishing.
The relative Gaussian curvature of $F$ (with respect to Gauss map $L C$)
equals 
$K_{\mathrm{rel}} = \kappa_1^{\mathrm{rel}} \kappa_2^{\mathrm{rel}} = K$. 

Recall, that $V$ and $\bar C$ correspond in the metric duality $\delta$
(Thm.~\ref{thm:diagrams}~\ref{itm:isov}).
Consequently, the Gaussian curvatures of $V$ and $\bar C$ are
reciprocal values to each other (cf.\ Eqn.~\eqref{eq:dualcurv}).
The same holds for the relative curvatures when switching the role of
relative Gauss maps. Hence, if $F$ is the relative Gauss map, the meaning
of $K$ in Equation~\eqref{eq:relprinccurv} is the isotropic Gaussian
curvature of $\bar C$. 

\begin{rem}
  In the statics interpretation, $V$ is the stress surface. Exactly for
  structures under tension and compression, which appear for example in
  material minimizing forms~\cite{pellis-optimal-2017}, $V$ is negatively
  curved ($K < 0$) and thus the relative principal curvatures are real. 
\end{rem}

\section{The Isotropic Darboux Wreath} 
\label{sec:darboux}

We can now derive the isotropic counterpart to the Darboux wreath in $E^3$
(see, e.g.,~\cite{sauer:1970}), which is a collection of 12
infinitesimally flexible surfaces with multiple relations between them. We
will see that the isotropic counterpart consists of 6 surfaces only, and
that several relations which differ in $E^3$, agree in $I^3$.

\subsection{Constructing the isotropic Darboux wreath}

With the four surfaces $(F, V)$ and $(C, \bar C)$ in~\eqref{eq:diagrams}
we have two pairs of infinitesimal isometric surfaces together with their
velocity diagrams (cf.\ Thm.~\ref{thm:diagrams}~\ref{itm:isoiii}). 
Recall that the symmetry between a surface and its velocity diagram
implies that also $F$ is a velocity diagram for an infinitesimal isometry
of $V$. Consequently, we can construct the rotation diagram $\bar B$ and
translation diagrams $B$ corresponding to the pair $(V, F)$. Note that we
switched the bar notation in this case. Interestingly, adding these two
surfaces to the list of surface and velocity diagrams $F, V, C, \bar C$
already closes the \emph{isotropic} Darboux wreath
(Thm~\ref{thm:darbouxwreath-i}).

\begin{lem} 
  Let $(F, V)$ be a pair consisting of a velocity diagram $V = (-v, u, n)$
  for an infinitesimal isometry of a surface $F = (u, v, f)$ and let $c$
  be the function defined by Equation~\eqref{eq:inf}.
  Then $\bar B = (f_v, -f_u, -n_u f_v + n_v f_u - c)$ is a rotation diagram
  and $B = (f_u, f_v, f_u u + f_v v - f)$ is a translation diagram  
  for the reversed pair $(V, F)$.
\end{lem}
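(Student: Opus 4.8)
The plan is to run the displacement-diagram construction from the paragraph preceding Definition~\ref{defn:diagrams} on the \emph{reversed} pair $(V,F)$, now treating $V=(-v,u,n)$ as the base surface and $F=(u,v,f)$ as its velocity diagram; this is legitimate because the flexibility condition~\eqref{eq:inf} is symmetric in $f$ and $n$, so $F$ is indeed a velocity diagram of $V$. First I would determine the isotropic infinitesimal motion $x\mapsto \bar D'+T'x$ of the form~\eqref{eq:matrixt} that carries the frame $(V,V_u,V_v)$ to $(F,F_u,F_v)$ to first order, i.e.\ solve $F=\bar D'+T'V$, $F_u=T'V_u$, $F_v=T'V_v$. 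With $V_u=(0,1,n_u)$ and $V_v=(-1,0,n_v)$ this gives $T'=\svs{0&1&0\\-1&0&0\\-f_v&f_u&0}$ and $\bar D'=(0,0,\,f-f_u u-f_v v)$. The key structural point is that the rotational block of $T'$ has the \emph{opposite} sign to the one obtained for $(F,V)$; this reversed rotation sense is exactly what forces the switched bar notation.

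Next I would read off the two diagrams as in Definition~\ref{defn:diagrams}. The translation diagram is the metric $\delta$-dual of the velocity diagram (relation~\ref{itm:isov} of Theorem~\ref{thm:diagrams}, now with the roles of $F$ and $V$ exchanged): applying~\eqref{eq:contact1} to the contact element of $F$ yields $\delta(F)=(f_u,f_v,\,f_u u+f_v v-f)$, which is precisely $B$, and whose third coordinate equals $-\bar D'_3$ as it must. The rotation diagram has top view $(f_v,-f_u)$, read off from the rotational data of $T'$, matching $\bar B$; what remains is to pin down its third coordinate $\bar c$.

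The crux is this third coordinate. Being the rotation diagram of $(V,F)$ forces $\bar c$ to satisfy the reversed analogue of Theorem~\ref{thm:diagrams}~\ref{itm:isoiv} --- namely that $J^{-1}\bar B$ (with $J$ from~\eqref{eq:J}; $J^{-1}$ rather than $J$ because of the reversed rotation sign) has tangent planes parallel to those of $V$ --- which amounts to the prescribed partials
\[
  \bar c_u = n_v f_{uu}-n_u f_{uv},\qquad \bar c_v = n_v f_{uv}-n_u f_{vv}.
\]
Their compatibility $\partial_v\bar c_u=\partial_u\bar c_v$ reduces, after the visible cancellation, to $f_{uu}n_{vv}-2f_{uv}n_{uv}+f_{vv}n_{uu}=0$, i.e.\ to Equation~\eqref{eq:inf}, so such a $\bar c$ exists. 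It then remains only to confirm that the stated closed form $\bar c=-n_u f_v+n_v f_u-c$ is such a potential: differentiating it and substituting the defining relations~\eqref{eq:g} for $c$, all mixed terms cancel and leave exactly the two partials above. This shows that $\bar B$ and $B$ are the rotation and translation diagrams of $(V,F)$. I expect the only real obstacle to be the sign bookkeeping of the reparametrisation and reversed rotation (the passage $J\to J^{-1}$ and the bar switch); the one genuine computation --- verifying that $-n_u f_v+n_v f_u-c$ has the required derivatives --- is routine once~\eqref{eq:g} and~\eqref{eq:inf} are invoked.
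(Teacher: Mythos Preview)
Your proposal is correct and arrives at the same endpoint as the paper: both identify the PDEs $\bar c_u = n_v f_{uu}-n_u f_{uv}$, $\bar c_v = n_v f_{uv}-n_u f_{vv}$ for the third coordinate of $\bar B$, note their integrability via~\eqref{eq:inf}, and verify by direct differentiation (using~\eqref{eq:g}) that $-n_u f_v+n_v f_u-c$ is a potential. The paper's proof is terser --- it simply invokes Definition~\ref{defn:diagrams} with the roles swapped and calls $B$ ``straightforward'' --- whereas you redo the preceding derivation, explicitly computing $T'$ and $\bar D'$ for the reversed pair and obtaining $B$ as $\delta(F)$; this extra work is not logically required but has the merit of making transparent why the rotation block flips sign and hence why the bar notation switches, a point the paper only asserts.
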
 
\begin{proof} 
  We define the diagrams following Definition~\ref{defn:diagrams}.
  The definition of $B$ is straightforward.
  A corresponding rotation diagram $\bar B$ is of the form
  $\bar B = (f_v, -f_u, \bar b)$ with $\bar b$ satisfying the
  associated Equations~\eqref{eq:g} which here read
  $\bar b_u = n_v f_{uu} - n_u f_{uv}$ and  
  $\bar b_v = n_v f_{uv} - n_u f_{vv}$. The existence of $\bar b$ follows
  from Equation~\eqref{eq:inf}.
  Then, differentiating the third component of $\bar B$ from our statement
  by $u$ and using Equation~\eqref{eq:g} and yields 
  $-n_{uu} f_v - n_u f_{uv} + n_{uv} f_u + n_v f_{uu} - f_u n_{uv} + f_v
  n_{uu}$ and therefore equals $\bar b_u$. Analogously for $v$.
  Consequently, $\bar b = -n_u f_v + n_v f_u - c$ up to a constant.
\end{proof} 

\begin{prop} 
  Let $(C, \bar C)$ be the rotation and translation diagrams of the pair
  $(F, V)$ where $F$ is the infinitesimally flexible surface and $V$ a
  velocity diagram. 
  Furthermore, let $(\bar B, B)$ be the corresponding diagrams of $(V, F)$.
  Then,
  \begin{enumerate}
    \item the following three pairs are metric duals of each other:
      \begin{equation*}
        \bar C = \delta(V),
        \qquad
        B = \delta(F),
        \qquad
        \bar B = \delta(C).
      \end{equation*}
    \item $B$ and $\bar B$ are the rotation and translation diagrams for
      the pair $(F, V)$
  \end{enumerate}
\end{prop}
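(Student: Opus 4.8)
The plan is to establish (i) by a single substitution into the contact-element form of the metric duality $\delta$, and to deduce (ii) from the permutability of base surface and velocity diagram together with the preceding lemma.

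For (i) I would take the contact-element representations recorded in Theorem~\ref{thm:diagrams} and in the preceding lemma, namely $V=(-v,u,n,-n_v,n_u)$, $F=(u,v,f,f_u,f_v)$ and $C=(-n_u,-n_v,c,f_v,-f_u)$, and apply the rule $\delta\colon(x,y,z,p,q)\mapsto(p,q,px+qy-z,x,y)$ from Lemma~\ref{lemma1} (equivalently Equation~\eqref{eq:contact1}) to each. From $V$ one reads off $\delta(V)=(-n_v,n_u,n_v v+n_u u-n,-v,u)=\bar C$; from $F$ one gets $\delta(F)=(f_u,f_v,f_u u+f_v v-f,u,v)$, whose point part is $B$; and from $C$ one obtains $\delta(C)=(f_v,-f_u,-n_u f_v+n_v f_u-c,-n_u,-n_v)$, whose point part is $\bar B$. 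As $\delta$ is an involution, these three one-line identities are exactly the asserted dualities $\bar C=\delta(V)$, $B=\delta(F)$, $\bar B=\delta(C)$.

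For (ii) I would use the symmetry emphasized just before the proposition: since the mixed-curvature condition~\eqref{eq:inf} is symmetric in $f$ and $n$, the reversed pair $(V,F)$ is again a base--velocity pair of an infinitesimal isometry. The preceding lemma produces its rotation diagram $\bar B$ and translation diagram $B$, where the existence of the potential in $\bar B$ rests on the integrability condition~\eqref{eq:g}, which is once more equivalent to~\eqref{eq:inf}. Because this is the same underlying flex with the roles of $F$ and $V$ interchanged, $B$ and $\bar B$ are precisely the rotation and translation diagrams belonging to the pair $(F,V)$ that arise from its second admissible ordering, the only bookkeeping effect of the swap being the interchange of the bar notation. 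Combined with (i), this inserts all six surfaces $F,V,C,\bar C,B,\bar B$ into a single closed configuration, the isotropic Darboux wreath.

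I expect the only delicate point to be the bookkeeping of this swap. When $V$ is taken as the base surface, its top view is the $\pi/2$-rotate of $(u,v)$, so one must keep track of the sign of $k$ in Equation~\eqref{eq:ivelocity} and of the accompanying bar-switch so that Definition~\ref{defn:diagrams} is applied to $(V,F)$ with a consistent orientation. Once these conventions are pinned down, the verification that the new potential satisfies~\eqref{eq:g} collapses to~\eqref{eq:inf}, and (ii) follows without further computation.
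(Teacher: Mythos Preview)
Your treatment of part (i) is correct and is exactly what the paper does: write $F,V,C$ as contact elements and feed them through Lemma~\ref{lemma1}. The paper phrases this as ``Theorem~\ref{thm:diagrams}~\ref{itm:isov} and Lemma~\ref{lemma1} imply the dualities''; you have simply made the three substitutions explicit.

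Your argument for part (ii), however, is circular. You read (ii) as the tautology that $(\bar B,B)$ are the diagrams one gets from $(F,V)$ ``in its second admissible ordering'', which is nothing more than the setup. The paper's proof makes clear that the content of (ii) is that $(B,\bar B)$ is itself a base--velocity pair, i.e.\ $K(B,\bar B)=0$, so that $\bar B$ is a velocity diagram for the infinitesimally flexible surface $B$. This is what lets the wreath close and is not automatic from the definition of $\bar B,B$. The paper establishes it by a direct computation entirely analogous to the proof of Theorem~\ref{thm:diagrams}~\ref{itm:isoiii}: one writes $B,\bar B$ in the general parametrizations (not as graphs), plugs into the Gaussian-curvature formula~\eqref{eq:general-K}, and checks that $\tfrac{d}{dt}K(B+t\bar B)\big|_{t=0}$ vanishes thanks to~\eqref{eq:inf}.

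There is a shorter route consistent with your symmetry idea, but you do not actually take it: since $(V,F)$ is a legitimate base--velocity pair by the symmetry of~\eqref{eq:inf}, Theorem~\ref{thm:diagrams}~\ref{itm:isoiii} applied verbatim to $(V,F)$ (rather than to $(F,V)$) already yields that its translation diagram $B$ is a velocity diagram for its rotation diagram $\bar B$, hence $K(B,\bar B)=0$. If you want to avoid the computation, you must invoke that theorem explicitly for the swapped pair; the ``bookkeeping of the bar notation'' by itself proves nothing.
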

\begin{proof}
  Let us first write $B$ and $\bar B$ in the form of contact elements:
  \begin{equation}
    \label{eq:bbar}
    B = (f_u, f_v, f_u u + f_v v - f, u, v)
    \ \text{and}\ 
    \bar B = (f_v, -f_u, -n_u f_v + n_v f_u - c, -n_u, -n_v).
  \end{equation}
  Then Theorem~\ref{thm:diagrams}~\ref{itm:isov} and Lemma~\ref{lemma1}
  imply the dualities.

  The proof that $K(B, \bar B) = 0$ works similar to the proof of
  Theorem~\ref{thm:diagrams}~\ref{itm:isoiii} which implies that $\bar B$
  is a velocity diagram for the infinitesimally flexible surface $B$.
\end{proof} 

\begin{figure}[t]
  \hfill
  \hfill
  \begin{overpic}[width=.28\textwidth]{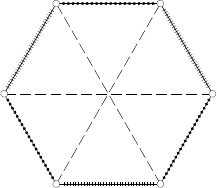}
    \put(20,02){\footnotesize\mlput{$(-v, u, v, -n_v, n_u) \!=\! V$}}
    \put(02,47){\footnotesize\mlput{$(u, v, f, f_u, f_v) \!=\! F$}}
    \put(20,82){\footnotesize\mlput{$(-n_u, -n_v, c, f_v, -f_u) \!=\! C$}}
    \put(80,82){\footnotesize\mrput{$\bar C \!=\! (-n_v, n_u, n_u u + n_v
    v - n, -v, u)$}}
    \put(98,47){\footnotesize\mrput{$B \!=\! (f_u, f_v, f_u u + f_v v - f, u, v)$}}
    \put(79,02){\footnotesize\mrput{$\bar B \!=\! (f_v, -f_u, -n_u f_v + n_v f_u - c, -n_u, -n_v)$}}
    \put(71,45){\footnotesize$\delta$}
    \put(66,20){\footnotesize$\delta$}
    \put(31,20){\footnotesize$\delta$}
    \put(3,19){\footnotesize$\perp_i$}
    \put(47,87){\footnotesize$\perp_i$}
    \put(88,19){\footnotesize$\perp_i$}
    \put(0,67){\footnotesize$\frac{\pi}{2}_\parallel$}
    \put(88,67){\footnotesize$\frac{\pi}{2}_\parallel$}
    \put(46,8){\footnotesize$\frac{\pi}{2}_\parallel$}
  \end{overpic}
  \hfill
  \hfill
  \hfill{}
  \caption{Relations in the isotropic Darboux wreath~I
  (Thm~\ref{thm:darbouxwreath-i}). The successive construction of rotation
  diagram and translation diagram closes after 3 steps resulting in 6
  surfaces in an isotropic Darboux wreath. These six surfaces are in part
  related additionally through metric duality $\delta$, I-orthogonality,
  and parallel tangent planes in corresponding points (after rotating
  about $\pi/2$).
  \\
  \includegraphics[width=.13\linewidth]{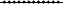}
  Infinitesimally flexible surface and velocity diagram. They are
  orthogonally related.
  \\
  \includegraphics[width=.13\linewidth]{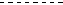}
  Correspond in the metric duality $\delta$.
  \\
  \includegraphics[width=.13\linewidth]{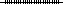}
  Parallel tangent planes in corresponding points if in each case the
  second surface is rotated by $\pi/2$ around the $z$-axis.
  }
  \label{fig:darboux1}
\end{figure}

We summarize the above in the following theorem which is depicted in
Figure~\ref{fig:darboux1}.

\begin{thm}[Darboux wreath~I]
  \label{thm:darbouxwreath-i}
   Let $F$ be an infinitesimally flexible surface with velocity diagram
   $V$. Then we have the following. 
   \begin{enumerate}
     \item The rotation diagram $C$ and translation diagram $\bar C$ of
       $(F, V)$ forms also a pair of infinitesimally flexible surfaces.
     \item The sequence of repeatedly constructing the rotation and
       translation diagrams returns to the pair $(F, V)$ and is called
       \emph{Darboux wreath}.
     \item The number of different surfaces in the Darboux wreath is 6.
     \item The three pairs $(V, F)$, $(C, \bar C)$ and $(B, \bar B)$
       consist of infinitesimally flexible surfaces together with a
       corresponding velocity diagram. They are orthogonally related
       (Def.~\ref{defn:orthogonally}).
     \item\label{itm:darbouxwreath-i-v} The three pairs $(V, \bar C)$,
       $(F, B)$ and $(C, \bar B)$ are related by the metric duality
       $\delta$.
     \item The three pairs $(F, C)$, $(\bar C, B)$ and $(\bar B, V)$ have
       parallel tangent planes in corresponding points if in each case the
       second surface is rotated by $\pi/2$ around the $z$-axis.
   \end{enumerate}
\end{thm}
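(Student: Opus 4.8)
The plan is to read this theorem as a synthesis: assertions (i), (iv), (v), (vi) are essentially bookkeeping on the contact elements in~\eqref{eq:diagrams} and~\eqref{eq:bbar} together with the preceding Lemma and Proposition, while the genuine content is the closure statements (ii) and (iii). First I would dispatch the four ``relational'' parts. For (i) and (iv) I observe that in each of the three pairs $\{F,V\}$, $\{C,\bar C\}$, $\{B,\bar B\}$ the top views differ by a quarter turn, $\tilde V=J\tilde F$, $\tilde{\bar C}=J^{-1}\tilde C$, $\tilde{\bar B}=J^{-1}\tilde B$, so the three inner-product conditions of Definition~\ref{defn:orthogonally} hold by reading the partials off the contact elements, exactly as was done for $F,V$ in Theorem~\ref{thm:diagrams}\,\ref{itm:isoi}. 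For (v) I apply Lemma~\ref{lemma1} to~\eqref{eq:diagrams} and~\eqref{eq:bbar} to get $\bar C=\delta(V)$, $B=\delta(F)$, $\bar B=\delta(C)$ (already recorded in the Proposition). For (vi) I compare the normal parts $(p,q)$ of the contact elements and check that $J$ carries one onto the other: $J$ sends the normal $(f_v,-f_u)$ of $C$ to the normal $(f_u,f_v)$ of $F$ (this is Theorem~\ref{thm:diagrams}\,\ref{itm:isoiv}), and analogously the normal of $B$ to that of $\bar C$ and the normal of $V$ to that of $\bar B$.

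The substance is (ii)--(iii). My plan is to show that one full turn of the construction ``rotation diagram, then translation diagram'' cyclically permutes the three flexible-velocity pairs $\{F,V\}\to\{C,\bar C\}\to\{B,\bar B\}\to\{F,V\}$, where at each turn the new base surface is the previous translation diagram and the new velocity diagram is the previous rotation diagram. Besides the given output $(F,V)\mapsto(C,\bar C)$, I would verify two further instances. Applying the construction to the (swapped) pair $(\bar C,C)$ produces the translation diagram $\delta(C)=\bar B$ and a rotation diagram that turns out to be $B$; applying it to $(\bar B,B)$ produces the translation diagram $\delta(B)=F$ and rotation diagram $V$. The translation diagrams are free, using that $\delta$ is an involution on contact elements ($\delta^2=\mathrm{id}$, immediate from Lemma~\ref{lemma1}). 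The rotation diagrams are the only new calculation: reparametrizing $\bar C$ over its own top view $(-n_v,n_u)$, the velocity $C$ has third coordinate $c$ whose gradient in these coordinates comes out as $(-f_u,-f_v)$ via the ODE of Lemma~\ref{lem:g}, and since the rotation-diagram top view is minus this gradient it equals $(f_u,f_v)$, i.e.\ $B$; the step from $(\bar B,B)$ likewise gives top view $(-v,u)$, i.e.\ $V$. Hence the sequence returns to $(F,V)$ after three turns, and because the six top views $(u,v)$, $(-v,u)$, $(-n_u,-n_v)$, $(-n_v,n_u)$, $(f_u,f_v)$, $(f_v,-f_u)$ are generically pairwise distinct, the wreath has exactly six members.

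The hard part is the bookkeeping hidden in ``reparametrize over its own top view''. The surfaces $C,\bar C,B,\bar B$ are not graphs over the original $(u,v)$, so to re-apply Definition~\ref{defn:diagrams} I must write each flexible surface as a graph over its own top view---which requires the relevant Hessian to be nondegenerate, so that the Gauss-type map is a local diffeomorphism---and then bring its partner into the standard velocity form $(-v',u',\,\cdot\,)$. Because the partner's top view is only a $\pm\pi/2$ rotation of the base, the orientation (the sign $k=\pm1$ in~\eqref{eq:ivelocity}) alternates from pair to pair, and it is exactly this sign that forces the translation and rotation diagrams to be read swapped between consecutive turns: feeding $(C,\bar C)$ back \emph{without} swapping returns $(F,V)$ only up to the isotropic congruence $(x,y,z)\mapsto(-x,-y,-z)$ and would spuriously collapse the cycle to length two. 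I would therefore carry the reparametrization through explicitly once, for $(\bar C,C)$, fix the orientation convention there, and then invoke the symmetry among the three pairs to obtain the remaining turn; with that in place (ii) and (iii), and hence the whole theorem, follow.
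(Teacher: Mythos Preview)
Your proposal is correct and matches the paper's treatment: the theorem is presented there explicitly as a summary (``We summarize the above in the following theorem'') of Theorem~\ref{thm:diagrams}, the subsequent Lemma, and the Proposition, with no separate proof given; parts (i), (iv), (v), (vi) are exactly the bookkeeping you describe.

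Where you go beyond the paper is in (ii)--(iii). The paper does not trace the cycle step by step; it simply exhibits $B,\bar B$ as the rotation and translation diagrams of the \emph{reversed} pair $(V,F)$, records the three $\delta$-dualities, and declares closure. Your explicit reparametrization of $(\bar C,C)$ over the top view $(-n_v,n_u)$, the chain-rule computation $(c_{u'},c_{v'})=(-f_u,-f_v)$ recovering the top view of $B$, and your observation that the orientation $k=\pm1$ alternates (so that one must swap the roles at each turn to avoid collapsing the cycle modulo $(x,y,z)\mapsto(-x,-y,-z)$) make the periodicity concrete rather than structural. This is a genuine clarification of a point the paper leaves implicit, at the cost of the nondegeneracy hypothesis on $\nabla^2 n$ that you correctly flag.
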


Interestingly, we obtain another Darboux wreath with a different flavor if
we apply the (orientation reversing) Euclidean motion $L$ form
Equation~\eqref{eq:L} to every second surface in the Darboux wreath of
Theorem~\ref{thm:darbouxwreath-i}. It is depicted in
Figure~\ref{fig:darboux-ii}.

\begin{thm}[Darboux wreath~II]
  \label{thm:darbouxwreath-ii}
   Let $F$ be an infinitesimally flexible surface with velocity diagram
   $V$ and let $L$ denote the (orientation reversing) Euclidean motion
   form Equation~\eqref{eq:L}. Then we have the following. 
   \begin{enumerate}
     \item\label{itm:dwIIi} The three pairs $(L V, \bar C)$, $(L B, F)$
       and $(L C, \bar B)$ are related by the metric duality $\nu$.
     \item\label{itm:dwIIii} The three pairs $(L V, \bar C)$, $(L B, F)$
       and $(L C, \bar B)$ are related by Weingarten transformations.
     \item\label{itm:dwIIiii} The three pairs $(F, L C)$, $(\bar C, L B)$
       and $(\bar B, L V)$ have parallel tangent planes in corresponding
       points. They are relative minimal surfaces to each other.
     \item\label{itm:dwIIiv} The three pairs $(L V, F)$, $(L C, \bar C)$
       and $(L B, \bar B)$ each have the same top view. They are dual
       relative minimal surfaces to each other. 
   \end{enumerate}
\end{thm}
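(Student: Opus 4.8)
The plan is to derive everything from the explicit contact-element representations~\eqref{eq:diagrams} and~\eqref{eq:bbar} together with the stated rule $LE = (y, -x, -z, -q, p)$. First I would record the three transformed surfaces
\begin{equation*}
  LV = (u, v, -n, -n_u, -n_v), \quad LB = (f_v, -f_u, f - f_u u - f_v v, -v, u), \quad LC = (-n_v, n_u, -c, f_u, f_v).
\end{equation*}
Everything then reduces to reading off or comparing coordinates, plus two short applications of earlier results.

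For~\ref{itm:dwIIi} I would feed these into the null-polarity formula of Lemma~\ref{lemma1}, namely $\nu\colon (x,y,z,p,q) \mapsto (q, -p, z - px - qy, -y, x)$, and verify coordinatewise that $\nu(LV) = \bar C$, $\nu(LB) = F$ and $\nu(LC) = \bar B$; in each case the third component reduces to the claimed value after elementary cancellation (e.g.\ $f - f_u u - f_v v + u f_u + v f_v = f$ for $\nu(LB) = F$). This is a direct computation. Part~\ref{itm:dwIIii} is then immediate, since a surface and its $\nu$-dual are always related by a Weingarten transformation (Corollary~\ref{cor:weingarten}), so the same three pairs qualify.

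Part~\ref{itm:dwIIiii} splits into two halves. Parallelism of tangent planes is read straight off the contact elements, because the normal data $(p,q)$ of $F$ and $LC$ both equal $(f_u, f_v)$, those of $\bar C$ and $LB$ both equal $(-v, u)$, and those of $\bar B$ and $LV$ both equal $(-n_u, -n_v)$. For the relative minimal property the pair $(F, LC)$ is exactly Proposition~\ref{prop:relmin}. For the remaining two pairs I would repeat the computation behind~\eqref{eq:relwein}: expanding $LB_u, LB_v$ in the frame $(\bar C_u, \bar C_v)$, respectively $LV_u, LV_v$ in the frame $(\bar B_u, \bar B_v)$, produces a relative Weingarten matrix whose trace equals $-K(F, V)/(n_{uu}n_{vv} - n_{uv}^2)$ in the first case and vanishes identically in the second. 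The first trace is therefore zero by the flexibility condition~\eqref{eq:inf}, so in both cases the relative mean curvature vanishes and the pair is relative minimal.

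Finally, for~\ref{itm:dwIIiv} equality of top views is again read off the contact elements ($(x,y)$-parts agree pairwise), and for the \emph{dual relative minimal} statement I would apply $\nu$ to the three relative minimal pairs of~\ref{itm:dwIIiii}. Since $\nu$ is an involution on contact elements, part~\ref{itm:dwIIi} also gives $\nu(F) = LB$, $\nu(\bar C) = LV$ and $\nu(\bar B) = LC$, so the $\nu$-images of $(F, LC)$, $(\bar C, LB)$, $(\bar B, LV)$ are precisely $(LB, \bar B)$, $(LV, F)$, $(LC, \bar C)$; Proposition~\ref{prop:dualrelmin} converts each relative minimal pair into a dual relative minimal pair, which are exactly the three pairs claimed. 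The main obstacle is the relative-minimal verification for $(\bar C, LB)$: unlike $(F, LC)$ and $(\bar B, LV)$, neither surface sits in the standard graph frame, so one must invert the $2\times 2$ frame matrix with determinant $n_{uu}n_{vv} - n_{uv}^2$ and recognize the numerator of the resulting trace as $-K(F, V)$ before invoking~\eqref{eq:inf}.
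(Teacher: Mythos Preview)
Your proposal is correct and follows the paper's own line of argument: part~\ref{itm:dwIIi} via Lemma~\ref{lemma1}, part~\ref{itm:dwIIii} via Corollary~\ref{cor:weingarten}, part~\ref{itm:dwIIiii} via Proposition~\ref{prop:relmin}, and part~\ref{itm:dwIIiv} via Proposition~\ref{prop:dualrelmin}. The one place where you diverge slightly is in~\ref{itm:dwIIiii}: the paper simply cites Proposition~\ref{prop:relmin} for all three pairs, implicitly using the cyclic symmetry of the wreath from Theorem~\ref{thm:darbouxwreath-i} (each of $(F,V)$, $(C,\bar C)$, $(B,\bar B)$ is a surface/velocity pair, so Proposition~\ref{prop:relmin} applies to each with its own rotation diagram), whereas you redo the Weingarten-matrix computation explicitly for $(\bar C, LB)$ and $(\bar B, LV)$. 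Your direct check is correct (the trace is $-K(F,V)/(n_{uu}n_{vv}-n_{uv}^2)$ in the first case and identically zero in the second), but you could save yourself that work by observing that the wreath symmetry reduces all three cases to the single instance already handled by Proposition~\ref{prop:relmin}.
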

\begin{proof}
  Property~\ref{itm:dwIIi} follows directly from applying
  Lemma~\ref{lemma1}. 
  Corollary~\ref{cor:weingarten} and~\ref{itm:dwIIi} immediately
  imply~\ref{itm:dwIIii}.
  Property~\ref{itm:dwIIiii} follows directly from
  Proposition~\ref{prop:relmin} and property~\ref{itm:dwIIiv} follows from
  Proposition~\ref{prop:dualrelmin}.
\end{proof}

\begin{figure}[t]
  \hfill
  \hfill
  \begin{overpic}[width=.28\textwidth]{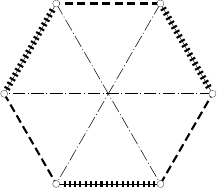}
    \put(20,02){\footnotesize\mlput{$(u, v, -n, -n_u, -n_v) \!=\! L V$}}
    \put(02,47){\footnotesize\mlput{$(u, v, f, f_u, f_v) \!=\! F$}}
    \put(20,82){\footnotesize\mlput{$(-n_v, n_u, -c, f_u, f_v) \!=\! L C$}}
    \put(80,82){\footnotesize\mrput{$\bar C \!=\! (-n_v, n_u, n_u u + n_v v - n, -v, u)$}}
    \put(99,47){\footnotesize\mrput{$L B \!=\! (f_v, -f_u, f - f_u u - f_v v, -v, u)$}}
    \put(79,02){\footnotesize\mrput{$\bar B \!=\! (f_v, -f_u, -n_u f_v + n_v f_u - c, -n_u, -n_v)$}}
    \put(71,45){\footnotesize$\nu$}
    \put(66,20){\footnotesize$\nu$}
    \put(31,20){\footnotesize$\nu$}
    \put(7,67){\footnotesize$\parallel$}
    \put(88,67){\footnotesize$\parallel$}
    \put(48,7){\footnotesize$\parallel$}
  \end{overpic}
  \hfill
  \hfill
  \hfill{}
  \caption{Relations in the isotropic Darboux wreath~II
  (Thm~\ref{thm:darbouxwreath-ii}). Applying the (orientation reversing)
  Euclidean motion $L$ form Equation~\eqref{eq:L} to every second surface
  in the isotropic Darboux wreath~I (Fig.~\ref{fig:darboux1}) 
  yields another isotropic Darboux wreath with 6 surfaces.
  These six surfaces are in part related additionally through metric
  duality $\nu$, parallel tangent planes in corresponding points, are
  (dual) relative minimal surfaces to each other and have same top views.
  \\
  \includegraphics[width=.13\linewidth]{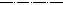}
  Correspond in the metric duality $\nu$.
  \\
  \includegraphics[width=.13\linewidth]{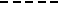}
  Have the same top view and are dual relative minimal surfaces to each
  other.
  \\
  \includegraphics[width=.13\linewidth]{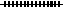}
  Have parallel tangent planes at corresponding points and are relative
  minimal surfaces to each other.
  }
  \label{fig:darboux-ii}
\end{figure}

Our definition of associated diagrams fits nicely to the statics
viewpoint. We may view $F$ as surface in equilibrium with $L V$ as Airy
stress surface. Then $L C$ is the reciprocal surface (force diagram) with
its Airy surface $\bar C$. It is known that the Airy surfaces are related
by metric duality in $I^3$
(cf.~\cite{Millar2021,vouga-2012-sss,pellis-optimal-2017}).
 
From the infinitesimal kinematics viewpoint, each surface in the wreath
has the meaning of an infinitesimally flexible surface, a velocity
diagram, a rotation diagram, a translation diagram and a metric dual.
Within statics, each surface is in equilibrium without inner loads, an
Airy surface, a reciprocal force diagram and a metric dual. From a purely
geometric perspective, each surface $S$ is relative minimal to a surface
$S_1$, dual relative minimal to $S_2$, and metric dual to $S_3$. Surfaces
$S_1, S_2, S_3$ are the ones to which the surface $S$ is connected by an
edge in the diagram of Figure~\ref{fig:darboux-ii}.

\subsection{Paratactic preimage surfaces in the isotropic Darboux wreath}

Each surface $F$ together with a velocity diagram $V$ gives rise to an
infinitesimal area preserving map in the plane which has a paratactic
preimage surface. Interestingly this paratactic preimage surface is also
included in the Darboux wreath. 

\begin{prop} 
  \label{prop:para-wreath}
  An infinitesimally flexible surface $F = (u, v, f)$ with velocity
  diagram  $V = (-v, u, n)$ defines two isometric surfaces 
  $F^- = (u, v, f - n)$ and $F^+ = (u, v, f + n)$ whose dual images
  determine an area preserving map $\nu(F^-) \mapsto \nu(F^+)$. The
  paratactic preimage surface of the top view of this map agrees with the
  rotation diagram $\bar B$ of $V$. 
\end{prop}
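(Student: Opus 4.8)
The plan is to compute the paratactic preimage surface directly from formula~\eqref{eq:para2} and to match it entry-by-entry against the contact element of $\bar B$ recorded in~\eqref{eq:bbar}. The four ``horizontal'' entries will fall out immediately, and the only genuine work will lie in the vertical coordinate.

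First I would make the area preserving map explicit. Since $V$ is a velocity diagram of $F$, Proposition~\ref{prop:isofromdiagr} guarantees that $F^-$ and $F^+$ are isometric, and Proposition~\ref{prop:isometricdual} then tells us that the top views of $\nu(F^-)$ and $\nu(F^+)$ are related by an area preserving map. By Equation~\eqref{eq:contact2} the top view of $\nu(F^{\pm})$ is $((f\pm n)_v, -(f\pm n)_u)$, so this map sends $E_l = (f_v - n_v, -f_u + n_u)$ to $E_r = (f_v + n_v, -f_u - n_u)$. Feeding $E_l, E_r$ into~\eqref{eq:para2} I would then read off the contact point $(x, y) = \tfrac12(E_l + E_r) = (f_v, -f_u)$ and the slopes $(p, q) = \big(\tfrac{y_r - y_l}{2}, \tfrac{x_l - x_r}{2}\big) = (-n_u, -n_v)$. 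These already coincide with the first, second, fourth, and fifth components of $\bar B = (f_v, -f_u, *, -n_u, -n_v)$.

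The hard part will be the third coordinate, because the paratactic map pins down $z$ only up to an isotropic translation; I must therefore verify that the third component of $\bar B$, namely $\bar b := -n_u f_v + n_v f_u - c$, is an actual solution of the integrability system~\eqref{eq:system} for the $x, y, p, q$ found above. Differentiating $\bar b$ and substituting $c_u = f_u n_{uv} - f_v n_{uu}$ and $c_v = f_u n_{vv} - f_v n_{uv}$ from Lemma~\ref{lem:g}, the unwanted terms cancel in pairs and leave $\bar b_u = -n_u f_{uv} + n_v f_{uu} = p\,x_u + q\,y_u$ and, analogously, $\bar b_v = p\,x_v + q\,y_v$. Hence $\bar b$ solves~\eqref{eq:system}, so the paratactic preimage surface and $\bar B$ agree up to the unavoidable vertical constant, which is exactly the assertion.
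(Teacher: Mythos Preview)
Your proof is correct and follows essentially the same route as the paper's own argument: both compute the top views of $\nu(F^\pm)$, feed them into~\eqref{eq:para2} to read off $(x,y,p,q)=(f_v,-f_u,-n_u,-n_v)$, and then identify the result with the contact element of $\bar B$ from~\eqref{eq:bbar}. Your treatment of the third coordinate is in fact more explicit than the paper's, which simply says ``comparison with~\eqref{eq:bbar} yields $z=-n_uf_v+n_vf_u-c$''; you actually verify via Lemma~\ref{lem:g} that this $\bar b$ solves~\eqref{eq:system}, which is the honest content behind that comparison.
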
 
\begin{proof} 
  By Proposition~\ref{prop:isofromdiagr}, the surfaces $(u, v, f - n)$ and
  $(u, v, f + n)$ are isometric to each other and by
  Proposition~\ref{prop:isometricdual} the map $\nu(u, v, f - n) \mapsto
  \nu(u, v, f + n)$ is area preserving. By Lemma~\ref{lemma1}, the top
  view of this map is described by $(f_v - n_v, -f_u + n_u) \mapsto (f_v +
  n_v, -f_u - n_u)$. We obtain the contact element representation of the
  paratactic preimage surface $(f_v, -f_u, z(u, v), -n_u, -n_v)$ by
  applying Equation~\eqref{eq:para2} for some height function $z(u, v)$. 
  Comparison with Equations~\eqref{eq:bbar} yields $z = -n_u f_v + n_v f_u
  - c$ which further implies that the paratactic preimage of this area
  preserving map is the surface $\bar B$.
\end{proof} 

Thus, all similar paratactic preimage surfaces are seen in the Darboux wreath. 
Figure~\ref{fig:wreath-para} illustrates how to find this surface.

\begin{SCfigure}[2.9][tb]
  \begin{overpic}[width=.30\textwidth]{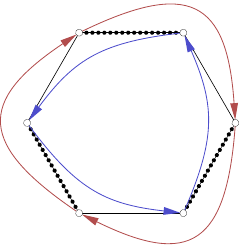}
    \put(3,48){$F$}
    \put(26,5){$V$}
    \put(75,7){$\bar B$}
    \put(98,48){$B$}
    \put(75,87){$\bar C$}
    \put(27,89){$C$}
  \end{overpic}
  \caption{Paratactic preimage surfaces in the Darboux wreath. For any
  surface in the wreath, take the neighboring velocity diagram of that
  surface (dotted edge) and then the next surface in the wreath is the
  paratactic preimage as explained in Prop.~\ref{prop:para-wreath}.}
  \label{fig:wreath-para}
\end{SCfigure}

\section{Special infinitesimally flexible nets and their discrete
counterparts} 
\label{sec:special}

In this section we investigate properties and behaviors of special
parametrizations of infinitesimally flexible surfaces.
We do this for three reasons.
First they lead to further interesting results. Second, particular
parametrizations are important in applications. 
And third, parametrizations lead to easily accessible discrete models,
which enhance the understanding of the multiple relations we have found.
Moreover, they provide new views of concepts in discrete differential
geometry, especially K{\oe}nigs nets~\cite{bobenko-2008-ddg}.

\subsection{Infinitesimal isometries of nets which induce infinitesimal
isometries of transversal ruled surfaces} 
\label{ssec:transruled}

We consider a parameterization, or net, $F(u, v)$ on a surface and
associate with it two families of ruled surfaces as follows. 

\begin{defn}
  Along an $u$-parameter curve ($v = v_0 = \text{const.}$), the tangents
  to the $v$-parameter curves form the \emph{transversal ruled surface} 
  parametrized by $R(u, t) = F(u, v_0) + t F_v(u, v_0)$. See
  Figure~\ref{fig:transversal-ruled-surfaces} (left).
  Analogously, we define transversal ruled surfaces along $v$-parameter
  curves.
\end{defn}

\begin{figure}[tb]
  \begin{overpic}[width=.2\textwidth]{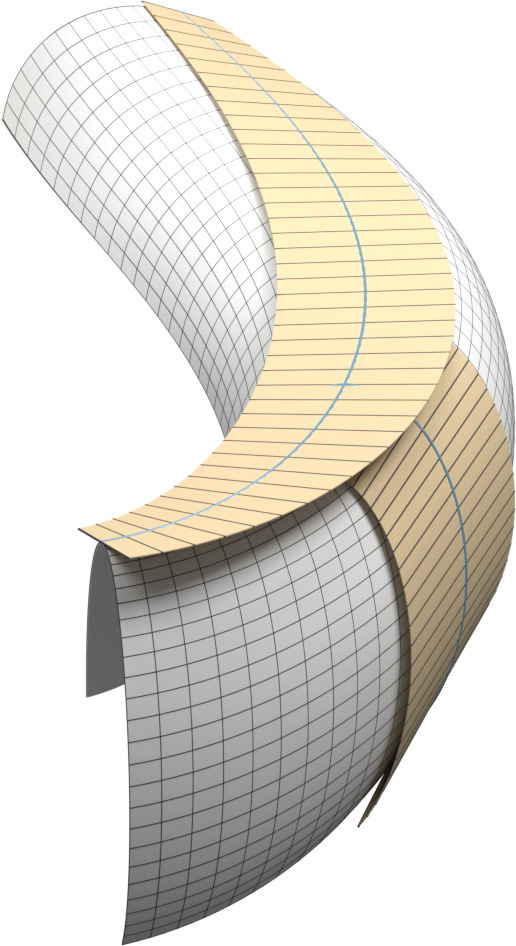}
    \put(0,50){\small$R(u, t)$}
    \put(0,80){\small$F(u, v)$}
  \end{overpic}
  \hfill
  \begin{overpic}[width=.7\textwidth]{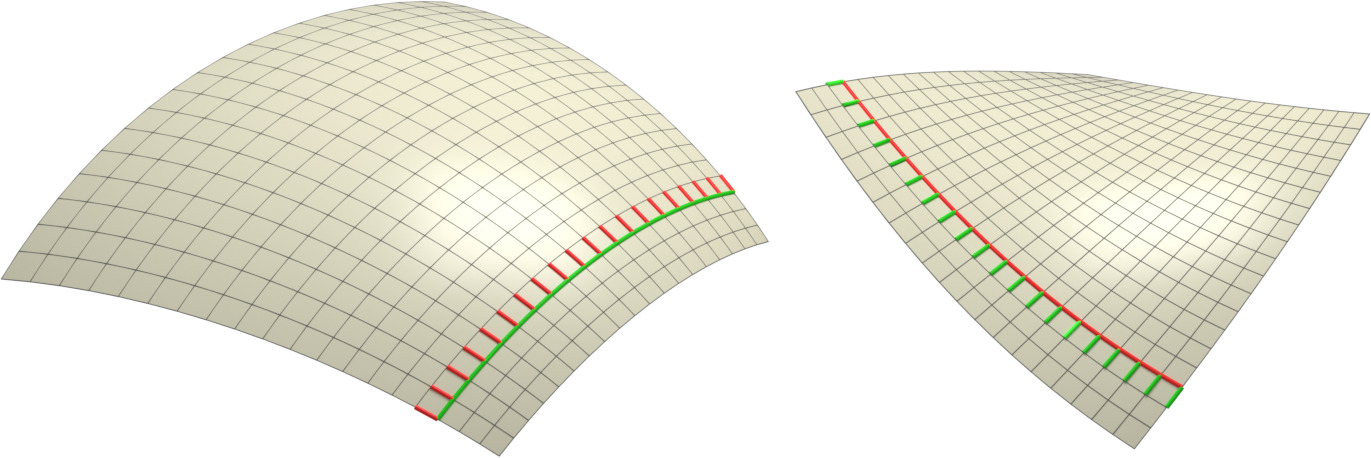}
    \put(42,3){$F$}
    \put(67,7){$G$}
  \end{overpic}
  \caption{\emph{Left:} A transversal ruled surface is formed by the
  tangents of the $v$-parameter curves along a $u$-parameter curve and
  vice versa.
  \emph{Right:} Reciprocal parallel nets $F$ and $G$. 
  The partial derivatives $F_u$ are parallel to $G_v$ along each $v$-line
  and vice versa.
  }
  \label{fig:transversal-ruled-surfaces}
\end{figure}

If $F$ undergoes a continuous isometric deformation, the transversal ruled
surfaces do in general not transform isometrically. Such isometries would
be Minding isometries, as discussed in Sections~\ref{ssec:simpleexamples}
and~\ref{ssec:2positions}. However, we will now study precisely those
infinitesimal isometries of nets $F(u, v)$ where the transversal ruled
surfaces undergo infinitesimal Minding isometries. We derive properties
of the corresponding associated diagrams.
The notion of reciprocal parallelity plays an important role.

\begin{defn}
  Let $F(u, v)$ and $G(u, v)$ parametrize two surfaces. They are called
  \emph{reciprocal parallel} if 
  $F_u \parallel G_v$ and 
  $F_v \parallel G_u$ at all points (see
  Figure~\ref{fig:transversal-ruled-surfaces} right).
\end{defn}

\begin{prop}
  \label{prop:smoothrigidfaces}
  For the diagrams $V, C, \bar C$ associated with an infinitesimally
  flexible net $F(u, v)$, the following properties are equivalent:
  \begin{enumerate}
    \item\label{itm:smoothrigidfaces-i} The infinitesimal isometry of $F$
      induces an infinitesimal isometry of all transversal ruled surfaces.
    \item\label{itm:smoothrigidfaces-ii} $V$ is a Q-net.
    \item\label{itm:smoothrigidfaces-iii} $\bar C$ is a Q-net.
    \item\label{itm:smoothrigidfaces-iv} The infinitesimal isometry of $C$
      induces an infinitesimal isometry of all transversal ruled surfaces.
    \item\label{itm:smoothrigidfaces-v} $L C$ is reciprocal parallel to $F$.
  \end{enumerate}
\end{prop}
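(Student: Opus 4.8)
The plan is to show that all five items are equivalent to the single scalar condition $n_{uv}\equiv 0$, where the velocity diagram is $V=(-v,u,n)$. For \ref{itm:smoothrigidfaces-ii} this is immediate: from the point representation one computes $V_u=(0,1,n_u)$, $V_v=(-1,0,n_v)$, $V_{uv}=(0,0,n_{uv})$, whence $\det(V_u,V_v,V_{uv})=n_{uv}$; thus $V_{uv}\in\myspan(V_u,V_v)$, i.e.\ $V$ is a Q-net, exactly when $n_{uv}=0$.

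Item \ref{itm:smoothrigidfaces-iii} then follows without further work: by Theorem~\ref{thm:diagrams}\ref{itm:isov} we have $\bar C=\delta(V)$, and metric dualities carry Q-nets to Q-nets (Section~\ref{subsec:contactelements}), so $\bar C$ is a Q-net iff $V$ is, giving \ref{itm:smoothrigidfaces-ii}$\Leftrightarrow$\ref{itm:smoothrigidfaces-iii}. For item \ref{itm:smoothrigidfaces-v} I would avoid a fresh computation and read the conclusion off the relative-Weingarten relations~\eqref{eq:relwein},
\[
  LC_u=-n_{uv}F_u+n_{uu}F_v,\qquad LC_v=-n_{vv}F_u+n_{uv}F_v.
\]
Since $F_u,F_v$ are independent, the reciprocal-parallel requirements $LC_u\parallel F_v$ and $LC_v\parallel F_u$ each amount to the vanishing of the off-diagonal coefficient $n_{uv}$; hence \ref{itm:smoothrigidfaces-v}$\Leftrightarrow n_{uv}=0$. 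This already links \ref{itm:smoothrigidfaces-ii}, \ref{itm:smoothrigidfaces-iii} and \ref{itm:smoothrigidfaces-v}.

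The heart of the statement is \ref{itm:smoothrigidfaces-i}$\Leftrightarrow$\ref{itm:smoothrigidfaces-ii}. Fix the transversal ruled surface $R(u,s)=F(u,v_0)+sF_v(u,v_0)$ along $v=v_0$. Its top view $(u,v_0+s)$ is a translation of the parameter plane, so $R$ is admissible and, writing $(\xi,\eta)=(u,v_0+s)$, a graph $R=(\xi,\eta,g)$ with $g=f(\xi,v_0)+(\eta-v_0)f_v(\xi,v_0)$. The deformation $F\mapsto F+\tau V$ induces on $R$ the velocity field $W=V(u,v_0)+sV_v(u,v_0)=(-\eta,\xi,m)$ with $m=n(\xi,v_0)+(\eta-v_0)n_v(\xi,v_0)$, whose top view is the quarter-turn of that of $R$; thus $(R,W)$ has exactly the form to which the flexibility equation~\eqref{eq:inf} applies, and the transversal ruled surface deforms infinitesimally isometrically iff $g_{\xi\xi}m_{\eta\eta}-2g_{\xi\eta}m_{\xi\eta}+g_{\eta\eta}m_{\xi\xi}=0$. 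Since $g,m$ are affine in $\eta$ we have $g_{\eta\eta}=m_{\eta\eta}=0$, while $g_{\xi\eta}=f_{uv}$ and $m_{\xi\eta}=n_{uv}$ at $(\xi,v_0)$, so this expression collapses to $-2f_{uv}n_{uv}$. As $K(R)=-f_{uv}^2$, the transversal ruled surface is a genuine (non-developable) ruled surface precisely where $f_{uv}\neq0$; demanding infinitesimal isometry of \emph{all} transversal ruled surfaces therefore forces $n_{uv}\equiv0$, i.e.\ \ref{itm:smoothrigidfaces-ii}, and the converse is trivial. The transversal ruled surfaces along the $v$-parameter curves give the same condition.

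Finally, for \ref{itm:smoothrigidfaces-iv} I would run the same first-order curvature computation on the pair $(C,\bar C)$, which by Theorem~\ref{thm:diagrams}\ref{itm:isoiii} is again an infinitesimally flexible surface together with its velocity diagram and whose top views likewise differ by a quarter-turn. Here $C$ is a genuine net rather than a graph, so I would use the general form of the curvature variation: for $R=C(\cdot,v_0)+sC_v(\cdot,v_0)$ one has $R_{ss}=0$, hence $K(R)=-\det(C_u,C_v,C_{uv})^2/D$, and --- the induced top-view motion being isometric to first order --- the first-order change of $K$ is proportional to $\det(C_u,C_v,C_{uv})$ times $\det(\bar C_u,C_v,C_{uv})+\det(C_u,\bar C_v,C_{uv})+\det(C_u,C_v,\bar C_{uv})$; using the explicit contact elements and the defining relation~\eqref{eq:g} for $c$, this last factor reduces to $\bar C$ being a Q-net, i.e.\ \ref{itm:smoothrigidfaces-iii}. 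Combined with the earlier chain this closes all five equivalences. I expect the transversal-ruled-surface steps to be the genuine obstacle: verifying that the induced top-view deformation is isometric for free (so that only the Gaussian curvature must be tracked) and extracting the clean factor $f_{uv}n_{uv}$ (respectively its $C$-analogue). A point to state explicitly is that \ref{itm:smoothrigidfaces-i}$\Leftrightarrow$\ref{itm:smoothrigidfaces-ii} presupposes non-degenerate transversal ruled surfaces ($f_{uv}\not\equiv0$): over a conjugate net $F$ they degenerate to developables and \ref{itm:smoothrigidfaces-i} would hold vacuously while $V$ need not be a Q-net.
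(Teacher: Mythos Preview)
Your approach is correct for \ref{itm:smoothrigidfaces-ii}, \ref{itm:smoothrigidfaces-iii}, \ref{itm:smoothrigidfaces-v}, and the implication \ref{itm:smoothrigidfaces-i}$\Leftrightarrow$\ref{itm:smoothrigidfaces-ii}, but it differs substantially from the paper's proof. You reduce everything to the scalar identity $n_{uv}\equiv 0$ via direct computation; the paper instead argues structurally. For \ref{itm:smoothrigidfaces-i}$\Leftrightarrow$\ref{itm:smoothrigidfaces-ii} it invokes Corollaries~\ref{cor:rulingpresisom} and~\ref{cor:torsaltoisom} to identify ``induced infinitesimal isometry of a transversal ruled surface'' with ``the corresponding transversal ruled surface of $V$ is torsal'', and then quotes Sauer's characterisation that a net is conjugate iff all its transversal ruled surfaces are torsal. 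For \ref{itm:smoothrigidfaces-v} the paper runs a duality argument through $\nu$, whereas your use of~\eqref{eq:relwein} is shorter and arguably more transparent. Your degeneracy remark ($f_{uv}\equiv 0$) is a genuine point the paper leaves implicit; their structural argument also needs the transversal ruled surfaces to be non-developable for the ``only if'' direction.

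Where your proposal has a real gap is \ref{itm:smoothrigidfaces-iv}. Your computation for \ref{itm:smoothrigidfaces-i}$\Leftrightarrow$\ref{itm:smoothrigidfaces-ii} relies essentially on $F$ and $V$ being graphs over the $(u,v)$-plane so that the mixed-curvature condition collapses to $-2f_{uv}n_{uv}$. For $(C,\bar C)$ this is no longer the case, and the sketch you give---expanding the general curvature variation and asserting that the relevant factor ``reduces to $\bar C$ being a Q-net''---is not justified; the determinantal expressions do not simplify in an obvious way, and the reparametrisation needed to put $C$ into graph form would destroy the net structure whose transversal ruled surfaces you are testing. The paper avoids this entirely: it observes that $(C,\bar C)$ is again an infinitesimally flexible surface with velocity diagram (Theorem~\ref{thm:diagrams}\ref{itm:isoiii}) and that its structural argument for \ref{itm:smoothrigidfaces-i}$\Leftrightarrow$\ref{itm:smoothrigidfaces-ii} is parametrisation-free, so the same reasoning applied to $(C,\bar C)$ gives \ref{itm:smoothrigidfaces-iv}$\Leftrightarrow$\ref{itm:smoothrigidfaces-iii} by Darboux-wreath symmetry. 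You should replace your sketch for \ref{itm:smoothrigidfaces-iv} by this symmetry argument.
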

\begin{proof}
  \ref{itm:smoothrigidfaces-i} $\Leftrightarrow$ \ref{itm:smoothrigidfaces-ii}:
  Let $F$ be an infinitesimally flexible surface with velocity diagram
  $V$. Then by Corollaries~\ref{cor:rulingpresisom}
  and~\ref{cor:torsaltoisom}, the associated transversal ruled surface
  $S(u, t) = V(u, v_0) + t V_v(u, v_0)$ is a velocity diagram for an
  infinitesimal isometry of $R(u, t) = F(u, v_0) + t F_v(u, v_0)$ if and
  only if $S$ is torsal. It is well known (see,
  e.g.,~\cite[p.~94]{sauer:1970}) that surfaces where transversal ruled
  surfaces are torsal along all parameterlines are precisely the Q-nets
  (i.e., conjugate parametrizations). Consequently, $V$ is a Q-net if and
  only~\ref{itm:smoothrigidfaces-i} holds.

  \ref{itm:smoothrigidfaces-ii} $\Leftrightarrow$
  \ref{itm:smoothrigidfaces-iii}:
  Any projective duality maps a Q-net onto a Q-net. 
  Theorem~\ref{thm:darbouxwreath-i}~\ref{itm:darbouxwreath-i-v} implies
  that $\bar C = \delta(V)$. Consequently, $V$ is a Q-net if and only if
  $\bar C$ is a Q-net.

  \ref{itm:smoothrigidfaces-iii} $\Leftrightarrow$
  \ref{itm:smoothrigidfaces-iv}:
  This follows from the symmetries in the Darboux wreath~I
  (Theorem~\ref{thm:darbouxwreath-i}) in analogy to 
  ``\ref{itm:smoothrigidfaces-i} $\Leftrightarrow$
  \ref{itm:smoothrigidfaces-ii}''.

  \ref{itm:smoothrigidfaces-ii}
  $\Leftrightarrow$
  \ref{itm:smoothrigidfaces-v}:
  Let us assume $V$ is a Q-net which is the same as $L V$ being a Q-net. 
  Equivalently all transversal ruled surfaces of $L S$ are torsal.
  The duality $\nu$ maps torsal surfaces via their enveloping tangent
  planes to parameter curves of $\bar C$. The rulings along $u$-curves
  (they have direction $L V_v$) become the tangents of $u$-curves of $\bar
  C$. Recall that the top views of a straight line and its $\nu$-image are
  parallel (Lem.~\ref{lem:topviewoflines}). Therefore, the top view of
  $L V_v$ and $\bar C_u$ are parallel and vice versa. 
  Consequently, the top views of $L V$ and $\bar C$ are reciprocal
  parallel if and only if $V$ is a Q-net.

  Since the top views of $F$ and $L V$ are the same as well as the top
  views of $\bar C$ and $L C$
  (Thm.~\ref{thm:darbouxwreath-ii}~\ref{itm:dwIIiv}), also the top views
  of $F$ and $L C$ are reciprocal parallel.
  Consequently, the top views of $F$ and $L C$ are reciprocal parallel if
  and only if $V$ is a Q-net.

  Furthermore, the parallelity of the tangent planes of $F$ and $L C$ 
  (Thm.~\ref{thm:darbouxwreath-ii}~\ref{itm:dwIIiii}) implies that $F$ is
  reciprocal parallel to $L C$ if and only if $V$ is a Q-net.
\end{proof}

\paragraph{Dual reciprocal parallel nets}

In the setting of Proposition~\ref{prop:smoothrigidfaces} we have that $F$
and $L C$ are reciprocal parallel. What is the corresponding property of
their metric duals $L B = \nu(F)$ and $\bar B = \nu(L C)$ in the Darboux
wreath~II (see Fig.~\ref{fig:darboux-ii})? We call the relation between $L
B$ and $\bar B$ \emph{dual reciprocal parallel}.

Recall that $F(u,v)$ and $L C(u,v)$ being reciprocal parallel means that
they have parallel tangent planes and $F_u \parallel L C_v$ as well as $F_v
\parallel L C_u$. The duality $\nu$ maps parallel tangent planes to points
with identical top views. Consequently, $L B (u, v)$ and $\bar B(u, v)$
have identical top views.

The points of a parameter curve $F(u, v_0)$ map to tangent planes of $L
B(u, v_0)$. Thus the tangent lines of the parameter curve map to the rulings
of the envelope of the family of planes and therefore to the line conjugate
to the tangent line of $L B(u, v_0)$.
The observation that parallel lines map to lines with the same top view
now implies the following proposition.

\begin{prop}
  Corresponding points $L B(u, v)$ and $\bar B(u, v)$ have the same top
  view. Furthermore, the conjugate directions to $L B_u$ and $\bar B_v$
  agree in the top view as well as the conjugate directions to $L B_v$ and
  $\bar B_u$ agree in the top view. 
\end{prop}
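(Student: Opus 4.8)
The plan is to derive both assertions from the two structural facts already in hand: that $LB=\nu(F)$ and $\bar B=\nu(LC)$ (Theorem~\ref{thm:darbouxwreath-ii}~\ref{itm:dwIIi}), and that $F$ and $LC$ are reciprocal parallel in the setting of Proposition~\ref{prop:smoothrigidfaces}~\ref{itm:smoothrigidfaces-v}. For the first claim I would argue as follows. Reciprocal parallel nets have, by definition, parallel tangent planes at corresponding points. Since the null polarity $\nu$ sends a plane to a point whose top view is determined solely by the Euclidean normal direction of that plane (cf.\ Equation~\eqref{eq:nu}), two parallel tangent planes of $F$ and $LC$ are mapped to points of $LB$ and $\bar B$ with identical top view. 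Alternatively, this can be read off tautologically from the contact-element representations: by Equation~\eqref{eq:bbar} and the action $LE=(y,-x,-z,-q,p)$ of $L$ on $B$, both $LB=(f_v,-f_u,\dots)$ and $\bar B=(f_v,-f_u,\dots)$ have point part beginning with $(f_v,-f_u)$, so their top views coincide.

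For the statement on conjugate directions I would invoke the classical duality dictionary. First I would note that the points of a $u$-parameter curve $F(u,v_0)$ correspond under $\nu$ to the tangent planes of $LB$ along its $u$-curve $LB(u,v_0)$. Consequently the intersection of two neighbouring tangent planes of $LB$ is the $\nu$-image of the line joining the two corresponding points of $F(u,v_0)$, because a correlation maps the join of two points to the meet of their polar planes. Passing to the limit, the conjugate direction to $LB_u$---which is precisely the characteristic direction of the family of tangent planes, i.e.\ the limiting intersection line of neighbouring tangent planes---is the $\nu$-image of the tangent line of $F$ in direction $F_u$. By Lemma~\ref{lem:topviewoflines} a line and its $\nu$-image have parallel top views, so the conjugate direction to $LB_u$ has top view parallel to $\tilde F_u$. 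The identical reasoning applied to $\bar B=\nu(LC)$ shows that the conjugate direction to $\bar B_v$ has top view parallel to $\widetilde{LC_v}$.

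It then remains only to close the loop using reciprocal parallelism: $F_u\parallel LC_v$ and $F_v\parallel LC_u$ hold as spatial vectors, hence in particular $\tilde F_u\parallel\widetilde{LC_v}$ and $\tilde F_v\parallel\widetilde{LC_u}$ in the top view. Chaining these parallelisms with the two identifications above yields that the conjugate directions to $LB_u$ and $\bar B_v$ agree in the top view, and symmetrically that those to $LB_v$ and $\bar B_u$ agree, which is exactly the assertion.

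The step I expect to be the crux is the clean justification that the conjugate direction to $LB_u$ is the $\nu$-image of the tangent line of the parameter curve of $F$. This rests on combining two facts with some care: that $\nu$ interchanges the point of a contact element with the tangent plane of the dual surface, and that the conjugate (characteristic) direction is the limit of intersections of neighbouring tangent planes. Once this is secured, the remaining ingredients---the top-view parallelism of Lemma~\ref{lem:topviewoflines} and the reciprocal parallelism of $F$ and $LC$---make the conclusion routine.
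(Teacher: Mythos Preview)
Your proposal is correct and follows essentially the same route as the paper: the text preceding the proposition argues exactly that $\nu$ maps the parallel tangent planes of the reciprocal parallel pair $F,\,LC$ to points with identical top view, and that tangent lines of parameter curves of $F$ (resp.\ $LC$) dualize to conjugate directions on $LB$ (resp.\ $\bar B$) with the same top view via Lemma~\ref{lem:topviewoflines}. Your write-up is in fact more explicit than the paper in closing the loop via $F_u\parallel LC_v$, $F_v\parallel LC_u$; the only quibble is that parallel tangent planes are an immediate \emph{consequence} of the reciprocal parallel condition rather than literally part of its definition.
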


\section{Infinitesimal isometries of discrete nets, K{\oe}nigs nets and
Voss-nets} 
\label{sec:discrete}

In this section we study a discrete version of the previous section.
It leads to further insights and it is also relevant for applications.
Now, $F, V, C, \bar C, B, \bar B$ are discrete nets in the sense that 
$F \colon \Z^2 \to \R^3$, etc. Two neighboring vertices 
$F(i, j) F(i + 1, j)$ or $F(i, j) F(i, j + 1)$ form an \emph{edge} of the
net and four vertices 
$F(i, j) F(i + 1, j) F(i + 1, j + 1) F(i, j + 1)$ form a (combinatorial)
\emph{face} $\ff_{ij}$. We often use the index notation $F_{ij} =
F(i, j)$. A geometric realization of such a face is not essential for
the discrete theory. Naturally, if the face is planar we typically think
of it represented by a planar quadrilateral piece of a plane.

\subsection{Infinitesimal isometries of discrete nets with rigid faces} 
\label{ssec:rigidfaces}

There are several possible discretizations of infinitesimally flexibility
of \emph{discrete} nets. 
The following definition leads us to results which are similar to the
previous section. In analogy to the ``smooth'' definition
(Def.~\ref{defn:inffelxsmooth}) we need metric isometry and preservation
of Gauss curvature of first order. Metric isometry will be enforced by
congruent top views of corresponding nets. Infinitesimal isometric
deformation of frames (cf.\ discussion on page~\pageref{frames}) will be
enforced by infinitesimal rigid transformation of each face. And
preservation of Gauss curvature will be enforced by a discrete version of
Corollary~\ref{cor:mixed}. For that recall that the mixed area of two
quadrilaterals $P, Q$ in $\R^2$ is 
$\area(P, Q) 
= \frac{1}{4} \sum_{i = 1}^4 \det(p_i, q_{i + 1}) + \det(q_i, p_{i + 1})$
(see, e.g.,~\cite{mueller+2010}).

Note that the metric dual of a mesh $\M$ with \emph{non-planar} faces can be
understood as the collection of lines and planes obtained by dualizing the
lines and vertices of $\M$. The dual $\nu(M)$ is then a collection of
faces (``\emph{non-copunctual} vertices'', see
Fig.~\ref{fig:non-copunctal} left).

\begin{figure}[t]
  \begin{overpic}[width=.40\textwidth]{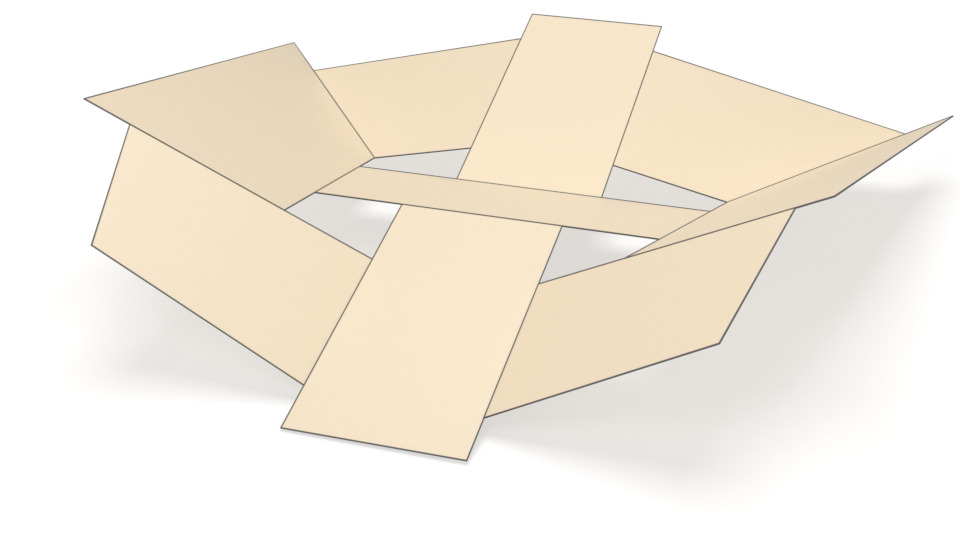}
  \end{overpic}
  \hfill
  \hfill
  \hfill
  \hfill
  \begin{overpic}[width=.44\textwidth]{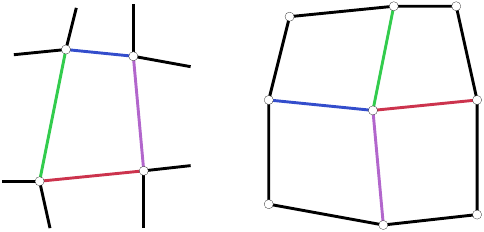}
    \put(1,13){\footnotesize$F_{ij}$}
    \put(30,8){\footnotesize$F_{i + 1, j}$}
    \put(28,37){\footnotesize$F_{i + 1, j + 1}$}
    \put(-1,40){\footnotesize$F_{i, j + 1}$}
    \put(69,28){\footnotesize$G_{ij}$}
    \put(100,27){\footnotesize$G_{i, j + 1}$}
    \put(56,21){\footnotesize$G_{i, j - 1}$}
    \put(73,49){\footnotesize$G_{i + 1, j}$}
    \put(74,-3){\footnotesize$G_{i - 1, j}$}
  \end{overpic}
  \hfill
  \hfill
  \hfill{}
  \caption{\emph{Left:} A net with edges and planar faces but with
  ``non-copunctal'' vertices. It is projective dual to a net with
  non-planar faces.
  \emph{Right:} Discrete reciprocal parallel nets. The edges around each
  face of one net are parallel to the edges around the corresponding
  vertex on the other net and vice versa. Parallel edges have the same
  color.}
  \label{fig:non-copunctal}
\end{figure}

\begin{defn}
  \label{defn:discrinfintiso}
  An isotropic net $F \colon \Z^2 \to I^3$ is \emph{infinitesimally
  flexible} if there exists a non-trivial discrete deformation vector
  field (defined on the vertices) $V \colon \Z^2 \to I^3$ such that:
  \begin{enumerate}
    \item The top view of $L V$ equals the top view of $F$.
    \item Each corresponding pair of faces $\bb, \cc$ of the dual nets
      $L B : = \nu(F)$, $\bar C := \nu(L V)$ has vanishing mixed area
      $\area(\bb, \cc) = 0$.
    \item $V$ is compatible with infinitesimal isometries of the faces in
      the following sence. For each face $\ff$ there is an infinitesimal
      isometry described by $\bar D_\ff$ and $T_\ff$ as in
      Equation~\eqref{eq:isotropicvelicitydiag}, such that 
      \begin{align*}
        V_{ij} &= \bar D_{\ff_{ij}} + T_{\ff_{ij}} F_{ij}
        &
        V_{ij} &= \bar D_{\ff_{i - 1, j}} + T_{\ff_{i - 1, j}} F_{ij}
        \\
        V_{ij} &= \bar D_{\ff_{i, j - 1}} + T_{\ff_{i, j - 1}} F_{ij}
        &
        V_{ij} &= \bar D_{\ff_{i - 1, j - 1}} + T_{\ff_{i - 1, j - 1}} F_{ij}.
      \end{align*}
  \end{enumerate}
\end{defn}

Note that since the top views of the edges of $LV$ and $F$ are the same,
the edges of $\bb$ and $\cc$ are parallel. Thus, $\area(\bb, \cc)$ is the
mixed area of two parallel quadrilaterals.

The edges along a discrete parameter line are considered discrete
tangents. The family of edges connecting two neighboring parameter lines
form \emph{discrete transversal ruled surfaces}. Consequently, per
definition in a discrete infinitesimal isometry the discrete
transversal ruled surfaces also undergo an infinitesimal isometry.

A \emph{discrete Q-net} $F$ is a net in space with the combinatorics of a
sublattice of the $\Z^2$ lattice and such that each face is planar. 

\begin{lem}
  \label{lem:rigidfaces} 
  The velocity diagram $V$ associated with a discrete infinitesimally
  flexible net $F$ is a Q-net.
\end{lem}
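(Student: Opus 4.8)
The plan is to show that every combinatorial face $\ff_{ij}$ of $V$ is planar, since that is exactly the statement that $V$ is a discrete Q-net. The starting point is condition (iii) of Definition~\ref{defn:discrinfintiso}, which attaches to each face $\ff$ a single infinitesimal isotropic motion $(\bar D_\ff, T_\ff)$ of the form in Equation~\eqref{eq:isotropicvelicitydiag}. First I would re-read the vertex-indexed compatibility relations of (iii) face by face. Collecting, for the fixed face $\ff_{ij}$, the one relation contributed by each of its four corners (each corner being surrounded by four faces, one of which is $\ff_{ij}$) yields
\begin{equation*}
  V_{kl} = \bar D_{\ff_{ij}} + T_{\ff_{ij}} F_{kl}
  \qquad\text{for}\qquad
  (k, l) \in \{(i, j), (i+1, j), (i+1, j+1), (i, j+1)\}.
\end{equation*}
In other words, all four corner velocities of the face are produced by one and the same affine map $x \mapsto \bar D_{\ff_{ij}} + T_{\ff_{ij}} x$.

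The key observation is the special shape of $T$ in Equation~\eqref{eq:matrixt}: its third column vanishes, so that for a point $x = (\xi, \eta, \zeta)$ the image $T x = (c_3 \eta, -c_3 \xi, c_1 \xi + c_2 \eta)$ does not depend on the third (isotropic) coordinate $\zeta$ at all, and depends on the top view $(\xi, \eta)$ affinely --- in fact linearly. Hence the corner velocities can be rewritten as $V_{kl} = \bar D_{\ff_{ij}} + M_{\ff_{ij}} \tilde F_{kl}$, where $\tilde F_{kl}$ is the top view of the corner $F_{kl}$ and $M_{\ff_{ij}}$ is the $3 \times 2$ matrix formed by the first two columns of $T_{\ff_{ij}}$.

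With this, the conclusion is immediate. The map $\tilde F \mapsto \bar D_{\ff_{ij}} + M_{\ff_{ij}} \tilde F$ is an affine map $\R^2 \to \R^3$, whose image is contained in an affine subspace of dimension at most two, i.e.\ in a plane (this remains true in the degenerate cases where $M_{\ff_{ij}}$ drops rank, where the image is merely a line or a point). The four corner velocities $V_{kl}$ are the images of the four top-view corners under this single map, hence they are coplanar; therefore the face of $V$ over $\ff_{ij}$ is planar. As $\ff_{ij}$ was arbitrary, $V$ is a Q-net.

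I expect the only genuine subtlety to be bookkeeping rather than geometry: one must make sure that the four corners of a common face are moved by the \emph{same} face motion, not by the four different motions attached to the vertex stars, which is precisely what the per-face extraction from the vertex-indexed conditions of Definition~\ref{defn:discrinfintiso} delivers. Everything else reduces to the structural fact that $T$ annihilates the isotropic direction, so that the corner velocities of a face are an affine image of a planar quadrilateral (its top view) and are thus automatically coplanar. It is worth noting that conditions (i) and (ii) of the definition are not needed here; the planarity of the faces of $V$ is forced by the face-rigidity condition (iii) alone.
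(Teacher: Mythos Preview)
Your proof is correct and follows essentially the same approach as the paper: both extract from condition~(iii) of Definition~\ref{defn:discrinfintiso} that all four corner velocities of a fixed face are images under the single affine map $x \mapsto \bar D_{\ff_{ij}} + T_{\ff_{ij}} x$, and then use the structural fact that $T$ has vanishing third column (equivalently, rank at most $2$) to conclude coplanarity. Your formulation via the $3\times 2$ matrix $M_{\ff_{ij}}$ acting on top views is a slightly more explicit restatement of the paper's edge-difference and rank argument, and your observation that conditions~(i) and~(ii) are not used here is accurate.
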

\begin{proof}
  Definition~\ref{defn:discrinfintiso} implies the existence of an
  infinitesimal isometry given by $\bar D_{\ff_{ij}}$ and $T_{\ff_{ij}}$
  such that all velocity vectors are compatible with that infinitesimal
  isometry. Let us therefore consider an edge $F_{ij} F_{i + 1, j}$ of
  this face $\ff_{ij}$ and the corresponding velocity vectors 
  $V_{ij} V_{i + 1, j}$. We have 
  $V_{ij} = \bar D_{\ff_{ij}} + T_{\ff_{ij}} F_{ij}$
  and
  $V_{i + 1, j} = \bar D_{\ff_{ij}} + T_{\ff_{ij}} F_{i + 1, j}$ 
  which implies
  $V_{ij} - V_{i + 1, j} = T_{\ff_{ij}} (F_{ij} - F_{i + 1, j})$.
  This holds for all edge vectors of the velocity diagram which
  corresponds to face $\ff_{ij}$. Since $T_{\ff_{ij}}$ has at most rank $2$
  the face $V_{ij} V_{i + 1, j} V_{i + 1, j + 1} V_{i, j + 1}$ must lie in
  a plane. Consequently, $V$ is a Q-net.
\end{proof}

In the spirit of the Darboux wreath let us define the \emph{discrete
translation diagram} $\bar C$ as the dual of $L V$:
\begin{equation*}
  \bar C = \nu(L V).
\end{equation*}
As a metric dual image of a Q-net, $\bar C$ is also a Q-net.

\subsection{K{\oe}nigs nets and infinitesimal isometries of discrete nets} 

Apart from projective duality, there is another ``type'' of duality for a
subclass of conjugate nets, the discrete K{\oe}nigs duality (see,
e.g.,~\cite{bobenko-2008-ddg}).
The most interesting aspect now is the characterization of the relations
between $L V$ and $F$ as well as its metric dual, the relation between
$\bar C$ and $L B$. 

\begin{defn}
  \label{defn:koenigs}
  Two discrete Q-nets are related via \emph{K{\oe}nigs duality} if
  corresponding faces have parallel corresponding edges, and parallel
  non-corresponding diagonals (see Fig.~\ref{fig:wreath-iii} right).
  A Q-net is called \emph{K{\oe}nigs net} if a K{\oe}nigs dual net exists.
\end{defn}

\begin{prop} 
  \label{prop:rigidplanarfaces}  
  Let $F : \Z^2 \to I^3$ be an infinitesimally flexible Q-net with
  velocity diagram $V$. Then the metric duals $L B = \nu(F)$ and $\bar C =
  \nu(V)$ are K{\oe}nigs nets and related via K{\oe}nigs duality.
  Thus $F$ and $V$ are metric duals of K{\oe}nigs nets and the relation
  between them is the metric dual to the K{\oe}nigs duality.
\end{prop}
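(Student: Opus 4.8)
The plan is to verify the two defining conditions of K{\oe}nigs duality (Definition~\ref{defn:koenigs}) for the pair $L B = \nu(F)$ and $\bar C = \nu(L V)$, namely that corresponding faces have parallel corresponding edges and parallel non-corresponding diagonals. First I would record that both $L B$ and $\bar C$ are genuine Q-nets. Since $F$ is a Q-net by hypothesis and projective dualities map Q-nets to Q-nets, $L B = \nu(F)$ is a Q-net; and since $V$ is a Q-net by Lemma~\ref{lem:rigidfaces}, so is $L V$ (the congruence $L$ preserves planarity of faces), whence $\bar C = \nu(L V)$ is a Q-net as well. In particular all faces are planar, so the polar duality $\nu$ sends each vertex of $F$ to a face of $L B$ and the edges emanating from that vertex to the bounding edges of that face, and likewise for $L V$ and $\bar C$.

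Next I would establish the parallel edges. By condition (i) of Definition~\ref{defn:discrinfintiso} the nets $F$ and $L V$ share the same top view, so each edge of $F$ and the corresponding edge of $L V$ have the same top view. As just noted, the edges of a face $\bb$ of $L B$ are the $\nu$-images of the edges of $F$ at the corresponding vertex, and those of the corresponding face $\cc$ of $\bar C$ are the $\nu$-images of the edges of $L V$ at the corresponding vertex. Since lines with the same top view are mapped by $\nu$ to parallel lines (cf.\ Lemma~\ref{lem:topviewoflines}), corresponding edges of $\bb$ and $\cc$ are parallel. Thus every corresponding pair $\bb, \cc$ is a pair of parallel quadrilaterals, exactly as already observed after Definition~\ref{defn:discrinfintiso}.

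For the diagonals I would invoke condition (ii) of Definition~\ref{defn:discrinfintiso}, namely $\area(\bb, \cc) = 0$ for every corresponding pair of faces, together with the classical fact from the discrete theory of K{\oe}nigs nets that for two \emph{parallel} planar quadrilaterals the mixed area vanishes if and only if their non-corresponding diagonals are parallel (see, e.g.,~\cite{mueller+2010,bobenko-2008-ddg}). Combined with the parallel-edge property this shows that $L B$ and $\bar C$ meet precisely the two requirements of Definition~\ref{defn:koenigs}, so they are related by K{\oe}nigs duality and in particular are K{\oe}nigs nets. Applying the involutive duality $\nu$ back then transports the statement to the preimages: $F = \nu(L B)$ and $L V = \nu(\bar C)$ are the metric duals of a pair of K{\oe}nigs-dual nets, and the correspondence between $F$ and $V$ is the $\nu$-image of the K{\oe}nigs duality, which is the final assertion.

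I expect the only genuine obstacle to be the diagonal step, that is, converting the vanishing mixed area of parallel quadrilaterals into parallelism of the non-corresponding diagonals; everything else is bookkeeping with the duality relations already assembled for the Darboux wreath. If a self-contained argument is preferred to a citation, one would expand the mixed-area expression $\area(\bb, \cc) = \frac{1}{4} \sum_i [\det(b_i, c_{i+1}) + \det(c_i, b_{i+1})]$ for parallel quadrilaterals with vertices $b_i$ and $c_i$ and check directly that its vanishing is equivalent to $b_3 - b_1 \parallel c_4 - c_2$ together with $b_4 - b_2 \parallel c_3 - c_1$, i.e.\ to parallelism of the swapped diagonals.
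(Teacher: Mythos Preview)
Your proposal is correct and follows essentially the same route as the paper: establish that $L B$ and $\bar C$ are Q-nets, use the common top view of $F$ and $L V$ together with Lemma~\ref{lem:topviewoflines} to get parallel corresponding edges of the dual faces, and then invoke the vanishing-mixed-area characterization of parallel non-corresponding diagonals to conclude K{\oe}nigs duality. The only cosmetic difference is that the paper first passes to the top views of $\bb$ and $\cc$, applies the 2D mixed-area/diagonal equivalence there, and then lifts the parallelism of non-corresponding diagonals back to $I^3$ via affine invariance, whereas you apply the characterization directly to the parallel quadrilaterals in space; both are valid and amount to the same argument.
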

\begin{proof} 
  Lemma~\ref{lem:rigidfaces} implies that the velocity diagram $V$ is a
  Q-net and therefore also $\bar C = \nu(L V)$ is a Q-net.
  Since $F$ is a Q-net also $LB = \nu(F)$ is a Q-net.

  Corresponding edges of $F$ and $L V$ have the same top view. Therefore,
  corresponding edges of $L B$ and $\bar C$ are parallel. Furthermore, the
  top views of corresponding faces $\bb$ and $\cc$ of $L B$ and $\bar C$
  are edgewise parallel quadrilaterals with the property $\area(\bb, \cc) = 0$.
  It is well-known that two parallel quadrilaterals with parallel edges
  have vanishing mixed area if and only if non-corresponding diagonals are
  parallel (see~\cite{bobenko-2008-ddg}). This is exactly the
  characterization of discrete K{\oe}nigs duality
  (Def.~\ref{defn:koenigs}). Consequently, the top views of $\bar C$ and
  $L B$ are K{\oe}nigs nets and K{\oe}nigs dual to each other. Since
  parallelity of non-corresponding diagonals is affinly invariant, this
  property holds for $\bar C$ and $L B$ which implies K{\oe}nigs duality
  for those nets as well.
\end{proof}

\begin{defn}
  Two discrete nets $F_{ij}$ and $G_{ij}$ are called \emph{reciprocal
  parallel} if 
  \begin{align*}
    \Delta_i F_{ij} &\parallel \Delta_j G_{ij}
    &\Delta_i F_{i, j + 1} &\parallel \Delta_j G_{i, j - 1}
    \\
    \Delta_j F_{ij} &\parallel \Delta_i G_{ij}
    &\Delta_j F_{i + 1, j} &\parallel \Delta_i G_{i - 1, j}.
  \end{align*}
  For an illustration see Figure~\ref{fig:non-copunctal} (right).
\end{defn}

\begin{lem}
  \label{lem:reciprocal}
  Let $F$ be an infinitesimally flexible Q-net with velocity diagram $V$. 
  There exists an A-net $L C$ (i.e., it has planar vertex stars) which is
  reciprocal parallel to $F$ and such that its top view and the top view
  of $\bar C$ are congruent.
  Furthermore, $\bar B := \nu(L C)$ is a reciprocal dual A-net to $L V$.
\end{lem}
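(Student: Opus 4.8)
The plan is to produce $LC$ as the reciprocal parallel partner of $F$, obtain its A-net property as a free consequence of the planar faces of $F$, and then transport everything to $\bar B$ through the metric duality $\nu$. First I would assemble the structure already at hand: by Lemma~\ref{lem:rigidfaces} the velocity diagram $V$, and with it $LV$, is a Q-net, so $LB=\nu(F)$ and $\bar C=\nu(LV)$ are Q-nets, and Proposition~\ref{prop:rigidplanarfaces} shows they are K{\oe}nigs dual (parallel corresponding edges and parallel non-corresponding diagonals, Def.~\ref{defn:koenigs}). This K{\oe}nigs duality is the dual face of the infinitesimal flexibility of $F$ and is what I will lean on to guarantee the construction below.

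To build $LC$ I would fix its top view to be congruent to that of $\bar C$ and impose reciprocal parallelism with $F$, matching, face by face, the four (coplanar) edges of each planar face $\ff_{ij}$ of $F$ with the four edges of the vertex star of $LC$ at $LC_{ij}$. These conditions determine $LC$ only subject to closure around the faces of $LC$ (equivalently around the vertices of $F$), and I expect this closure to be the heart of the proof. Its solvability is precisely the discrete self-stress condition — the vanishing mixed area imposed in Definition~\ref{defn:discrinfintiso}(ii) — which, after dualizing by $\nu$, is the K{\oe}nigs duality of $LB$ and $\bar C$ from Proposition~\ref{prop:rigidplanarfaces}, and which is the discrete counterpart of the existence of the potential $c$ in Lemma~\ref{lem:g}.

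Granting such an $LC$, the A-net property is the clean payoff. Since $F$ is a Q-net, the four edges bounding each face $\ff_{ij}$ are coplanar, and reciprocal parallelism carries these four coplanar directions onto the four edges meeting at the vertex $LC_{ij}$; hence each vertex star of $LC$ lies in a plane, which is exactly the statement that $LC$ is an A-net. Because dualities send A-nets to A-nets, $\bar B=\nu(LC)$ is then automatically an A-net as well.

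Finally I would dualize the reciprocal parallel relation between $F$ and $LC$ by applying $\nu$. By Lemma~\ref{lem:topviewoflines} a line and its $\nu$-image have parallel top views, so the edgewise parallelism between $F$ and $LC$ descends to a top-view correspondence between $LB=\nu(F)$ and $\bar B=\nu(LC)$; together with $\bar C=\nu(LV)$ and the congruence of the top views of $LC$ and $\bar C$ already built into the construction, this identifies $\bar B$ as the reciprocal dual A-net of $LV$. The resulting relations reproduce the relative minimal pair $(\bar B,LV)$ of Theorem~\ref{thm:darbouxwreath-ii}\,\ref{itm:dwIIiii}, confirming consistency with the smooth picture.
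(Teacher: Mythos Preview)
Your route differs from the paper's in a substantive way, and both approaches are reasonable, but your crucial step is asserted rather than proved.

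The paper does \emph{not} build $LC$ by fixing the top view and then solving a closure problem. Instead it invokes an external result (from~\cite{AGAG-2023}): over any planar K{\oe}nigs net one can erect an A-net $A$ with that same top view. Applying this to the top view of $LB$ and then taking $\nu(A)$ gives, for free, an A-net whose edges are parallel to those of $F$ (since $\nu$ preserves top-view directions of lines and sends A-nets to A-nets). The remaining work is to check that the top view of $\nu(A)$ is the K{\oe}nigs dual of the top view of $A$ (done via the non-corresponding diagonal argument), hence homothetic to the top view of $\bar C$; one then rescales. So the existence question is outsourced to the K{\oe}nigs-to-A-net lift, and what the paper actually verifies is the top-view matching.

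Your approach reverses this: you fix the top view of $LC$ to equal that of $\bar C$ and then try to determine the heights by demanding 3D edge-parallelism with $F$. This is the Maxwell--Cremona direction and is perfectly natural; your derivation of the A-net property (planar face of $F$ $\Rightarrow$ planar vertex star of $LC$) is clean and correct, and the dualization to $\bar B$ is fine. The gap is the sentence ``Its solvability is precisely the discrete self-stress condition --- the vanishing mixed area imposed in Definition~\ref{defn:discrinfintiso}(ii)''. Concretely, the closure you must verify is that, around each quadrilateral face of $\widetilde{\bar C}$ (equivalently around each inner vertex of $F$), the four $z$-increments---each equal to the top-view edge length of $\bar C$ times the slope of the matching edge of $F$---sum to zero. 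This is an equilibrium (self-stress) condition at the vertices of $F$ with specific edge weights coming from $\bar C$, and you still have to identify those particular weights with the ones encoded by the K{\oe}nigs duality of $LB$ and $\bar C$ (equivalently by $\area(\bb,\cc)=0$). That identification is the discrete analogue of Lemma~\ref{lem:g}, as you say, but it is a computation, not a tautology; the paper avoids it entirely by going through~\cite{AGAG-2023}. If you supply this check your argument would be complete and arguably more self-contained than the paper's.
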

\begin{proof}
  By Proposition~\ref{prop:rigidplanarfaces}, the nets $\bar C$ and $L B$
  are discrete K{\oe}nigs nets. In~\cite{AGAG-2023} we show that there is
  always an A-net $A$ whose top view is the same as the top view of $L B$.
  The metric duality $\nu$ maps $L B$ to $F$, lines (edges) with the
  same top view to parallel lines (edges), and A-nets to A-nets.
  Consequently, $\nu(A)$ is an A-net with edges parallel to the edges of
  $F$. It is therefore reciprocal parallel to $F$. The edges of the top
  view of $\nu(A)$ are parallel to the edges of $L B$ and therefore
  parallel to the edges of the top view of $\bar C$.

  Let us consider a non-planar face $\aaa$ of $A$ with vertices $a_{ij}$,
  etc. The vertex planes $\alpha_{ij}$ in $a_{ij}$ and $\alpha_{i + 1, j}$
  in $a_{i + 1, j + 1}$ intersect in the line $a_{i + 1, j} a_{i, j + 1}$.
  The metric duality maps the planes to points and therefore the
  intersection line to the connecting line of $\nu(\alpha_{ij})$ and
  $\nu(\alpha_{i + 1, j + 1})$. The top views of the lines remain parallel
  under $\nu$ which implies that the top views of $\nu(A)$ and $A$ have
  parallel edges and parallel non-corresponding diagonals, hence the top
  views are K{\oe}nigs dual K{\oe}nigs nets. Since the top view of $A$ is
  the top view of $L B$, the top view of $A$ is homothetic to the top view
  of $\bar C$ as the K{\oe}nigs dual is unique up to translation and
  scaling. 

  Let us denote by $L C$ the scaled version of $\nu(A)$ such that the
  top views of $L C$ and $\bar C$ are congruent. Then by the same argument
  as above $\bar B := \nu(L C)$ is an A-net reciprocal dual to $L V$.
\end{proof}

If $F$ is a Q-net then the Darboux wreath~II specializes to the one shown
in Figure~\ref{fig:wreath-iii} (left).

\begin{figure}[t]
  \hfill
  \hfill
  \hfill
  \hfill
  \hfill
  \begin{overpic}[width=.25\textwidth]{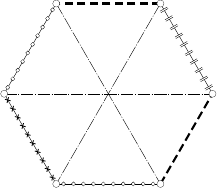}
    \put(20,02){\footnotesize\mlput{dual K{\oe}nigs\ldots $L V$}}
    \put(04,47){\footnotesize\mlput{dual K{\oe}nigs\ldots $F$}}
    \put(20,82){\footnotesize\mlput{A-net\ldots $L C$}}
    \put(80,82){\footnotesize\mrput{$\bar C$\ldots K{\oe}nigs net}}
    \put(98,47){\footnotesize\mrput{$L B$\ldots K{\oe}nigs net}}
    \put(79,02){\footnotesize\mrput{$\bar B$\ldots A-net}}
    \put(71,45){\footnotesize$\nu$}
    \put(66,20){\footnotesize$\nu$}
    \put(31,20){\footnotesize$\nu$}
  \end{overpic}
  \hfill
  \hfill
  \hfill
  \hfill
  \hfill
  \begin{overpic}[width=.36\textwidth]{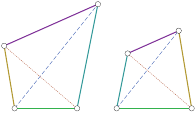}
    \put(0,2){\footnotesize$p_1$}
    \put(42,2){\footnotesize$p_2$}
    \put(53,55){\footnotesize$p_3$}
    \put(0,38){\footnotesize$p_4$}
    \put(100,2){\footnotesize$q_1$}
    \put(52,2){\footnotesize$q_2$}
    \put(60,33){\footnotesize$q_3$}
    \put(93,42){\footnotesize$q_4$}
  \end{overpic}
  \hspace{1ex}
  \caption{\emph{Left:} Relations in the isotropic Darboux wreath for a
  discrete infinitesimally flexible Q-net $F$ with rigid faces. 
  \emph{Right:} A pair of K{\oe}nigs dual faces. Corresponding edges are
  parallel (e.g., $p_2 p_3 \parallel q_2 q_3$). Non-corresponding
  diagonals are also parallel (e.g., $p_1 p_3 \parallel q_2 q_4$).
  \\
  \includegraphics[width=.15\linewidth]{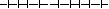}
  K{\oe}nigs duality
  \\
  \includegraphics[width=.15\linewidth]{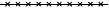}
  metric dual of K{\oe}nigs duality
  \\
  \includegraphics[width=.15\linewidth]{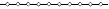}
  reciprocal parallel
  \\
  \includegraphics[width=.15\linewidth]{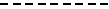}
  dual reciprocal parallel
  \\
  \includegraphics[width=.15\linewidth]{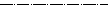}
  metric dual
  }
  \label{fig:wreath-iii}
\end{figure}

There is another relation which fits well to the smooth theory concerning
the fact that $L B$ and $\bar C$ form a pair of relative minimal surfaces.
According to the curvature theory based on mesh
parallelism~\cite{bobenko-2010-curv}, the relative mean curvature of $L
B$ w.r.t.\ $\bar C$ is in each face $\bb$ of $L B$ equal to 
$\area(\bb, \cc)/\area(\bb)$ and thus equal to zero. Hence, we see
the Q-nets $L B$ and $\bar C$ as \emph{relative minimal surfaces} of each
other. In fact, due to the parallelism, as \emph{relative principal
curvature nets}.  The metric dual relation between $F$ and $L V$ based on
point parallelism appears also in~\cite{vouga-2012-sss}.

\paragraph{The statics viewpoint}

The interpretation in terms of statics is of practical relevance. $F$ is a
net in equilibrium without external forces applied to inner vertices,
where forces act on edges only. The net $L C$ is the reciprocal force
diagram of graphic statics. The symmetry between $F$ and $L C$ is
well-known. Both are in equilibrium and one is the force diagram of the
other. The associated diagrams $V$ and $\bar C$ are the Airy stress
polyhedra, which are found already in the work of Maxwell and continue to
be used in computational structural design (see,
e.g.,~\cite{Millar2021,vouga-2012-sss}). The smooth counterpart of the
previous subsection concerns so-called truss-like continua. Most
importantly, a net on an equilibrium surface $F$, which is
itself in equilibrium (``Querspannungsnetz'' according to
Sauer~\cite{sauer:1970}) corresponds to a conjugate net on the Airy stress
surface $V$. Airy surfaces do not appear in Sauer's treatment.
In applications, one often uses properties of the smooth
counterpart to obtain good initial guesses for the computation of discrete
versions via numerical optimization (see,
e.g.,~\cite{Millar2021,vouga-2012-sss}).

\subsection{The isotropic Darboux wreath of special surfaces}
\label{ssec:darboux-wreath-examples}

In this section we will look at two special Examples of surface
classes namely isotropic linear Weingarten surfaces and Voss nets. We
investigate the remarkable nets which appear in the Darboux wreath in
these two cases.

\paragraph{Isotropic linear Weingarten surfaces} 

Inspired by recent work of Tellier et al.~\cite{TELLIER-linearWeingarten},
we take as initial surface $F(u, v) = (u, v, f(u, v))$ an isotropic linear
Weingarten surface whose isotropic curvatures $H(F), K(F)$ satisfy a
linear relation of the form
\begin{equation*}
  a H(F) + b K(F) =0,
\end{equation*}
for some $a, b \in \R$. Let 
\begin{equation*}
  n(u, v) = \frac{a}{2} (u^2 + v^2) + b f(u, v),
\end{equation*}
which implies 
\begin{align*}
  n_u = a u + b f_u,
  \quad
  n_v = a v + b f_v,
  \quad
  n_{uu} = a + b f_{uu},
  \quad
  n_{uv} = b f_{uv},
  \quad
  n_{vv} = a + b f_{vv}.
\end{align*}
Then, the surface $V(u, v) = (-v, u, n(u, v))$ is an associated velocity
diagram, since 
\begin{align*}
  K(F, V) 
  &= 
  f_{uu} n_{vv} - 2 f_{uv} n_{uv} + f_{vv} n_{uu}
  \\
  &=
  f_{uu} (a + b f_{vv}) - 2 f_{uv} b f_{uv} + f_{vv} (a + b f_{uu})
  \\
  &=
  a (f_{uu} + f_{vv}) + 2 b (f_{uu} f_{vv} - f_{uv}^2)
  =
  2 a H(F) + 2 b K(F) 
  =
  0.
\end{align*}
It follows immediately that $V$ is itself also an isotropic linear
Weingarten surface to the curvature relation
\begin{equation*}
  -a H(V) + K(V) = 0.
\end{equation*}
By Equation~\eqref{eq:dualcurv}, the surfaces $\bar C = \nu(L V)$ and $L B
= \nu(F)$ possess constant isotropic mean curvature
\begin{equation*}
  H(L B) = -\frac{H(F)}{K(F)} = \frac{b}{a}, 
  \quad\text{and}\quad 
  H(\bar C) = -\frac{H(V)}{K(V)} = -\frac{1}{a}.
\end{equation*}

The Hessian of $n$ reads $\nabla^2 n = a I + b \nabla^2 f$ where $I$
denotes the identity matrix. Consequently, the isotropic principal
curvature directions on $F$ and $L V$ have the same top view. Hence,
isotropic principal curvature lines of $F$ and $L V$ correspond to each
other in the point-parallelism. 

Let us now consider $F$ and $L V$ parametrized by curvature lines. Since a
curvature line parametrization is a Q-net,
Proposition~\ref{prop:smoothrigidfaces} implies that this infinitesimal
isometry induces infinitesimal isometries of the transversal torsal ruled
surfaces (dual K{\oe}nigs nets). Applying metric duality $\nu$, we obtain
surfaces $L B$ and $\bar C$ on which isotropic principal curvature line
nets are K{\oe}nigs nets that correspond in the K{\oe}nigs duality. Nets
$\bar B$ and $L C$ are reciprocal parallel to the orthogonal Q-nets $L V$
and $F$. Thus they are orthogonal A-nets and thus represent isotropic
minimal surfaces. 

In the discrete setting, we take as the discrete principal nets isotropic
conical nets or circular nets. Thus $F$ and $L V$ are infinitesimally
flexible isotropic principal curvature nets. Both nets have a
self-stressing mode and may be realized as a cable-net with planar faces,
provided that the surface is negatively curved and thus the equilibrium
state is stable. The metric dual nets $L B$ and $\bar C$ are also
isotropic circular or conical nets, respectively. They are K{\oe}nigs nets
in equilibrium with forces given by the reciprocal parallel A-nets $\bar
B$ and $L C$, respectively. Due to the parallelism of corresponding edges
in a reciprocal parallel pair and the fact that edges of different
parameter lines at a vertex are discrete orthogonal in the principal nets,
they are also discrete orthogonal in the A-nets. Being I-orthogonal
A-nets, the nets $\bar B$ and $L C$ are therefore discrete isotropic
minimal surfaces.

\paragraph{Isotropic Voss nets} 

Since we did not talk much about continuous (finite) isometric
deformations so far, we consider now the isotropic counterpart to a famous
example of flexible Q-nets, known as \emph{Voss nets} or briefly
\emph{V-nets}. These are discrete counterparts to surfaces with a
conjugate net of geodesics, first studied by A.~Voss~\cite{voss1888uber}.
They are reciprocal parallel nets to discrete surfaces of constant
Gaussian curvature
(\emph{K-nets})~\cite{sauer:1970,Schief2008,wunderlich-1951}.

Isotropic V-nets recently appeared in a study of 4-webs in $I^3$ which are
formed by asymptotic and geodesic curves~\cite{AGAG-2023}. There, a
special case of V-nets occurred, but most of the results hold for general
V-nets in $I^3$. We outline them and discuss all nets in the Darboux
wreath.

A discrete V-net $F$ in $I^3$ is a Q-net in which both families of
parameter lines are discrete geodesics, i.e., appear as straight lines in
the top view. To construct $V$, we prescribe the top view and two
parameter lines of different families. From these Cauchy data, planarity
of quads yields $V$, assuming that the net is simply connected. For the
V-nets in~\cite{AGAG-2023}, the top views of all parameter lines are
tangents of the same conic. 
We show that isotropic V-nets are flexible within our definition
of isometries. 

\begin{SCfigure}[2][t]
  \begin{overpic}[width=.33\textwidth]{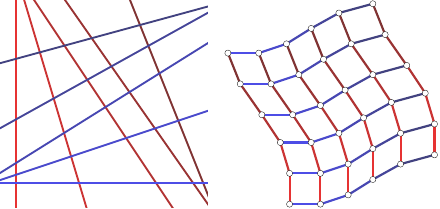}
  \end{overpic}
  \caption{The top view of the parameter curves of an isotropic V-net are
  straight lines. The top view of its metric dual is a reciprocal parallel
  net hence a translational net.}
  \label{fig:translational-net}
\end{SCfigure}

\begin{prop}
  \label{prop:v-net}
  An isotropic V-net $F$ can be embedded into a continuous family of
  isometric isotropic V-nets $F(t)$. 
\end{prop}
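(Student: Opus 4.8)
The plan is to reduce everything to the isotropic heights over a fixed top view and to realise the family as a reciprocal scaling of the two ``directional'' curvatures. Since an isotropic isometry must appear as a congruence in the top view, I may arrange all members $F(t)$ to share the \emph{same} top view $\tilde F$; then only the heights $z(t)$ vary, with $z(0)=:h$ the height of $F$, and $F(t)$ is a V-net precisely when each face stays planar (the parameter lines are automatically geodesics, being straight in the fixed $\tilde F$). Writing $\tilde F_{uu}=\alpha\tilde F_u$, $\tilde F_{vv}=\gamma\tilde F_v$, $\tilde F_{uv}=\beta_1\tilde F_u+\beta_2\tilde F_v$ (possible exactly because the top-view parameter lines are straight), the Q-net condition $F_{uv}\in\myspan(F_u,F_v)$ becomes the single linear relation $z_{uv}=\beta_1 z_u+\beta_2 z_v$. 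Substituting into~\eqref{eq:general-K}, the key step is that the mixed determinant $\det(F_u,F_v,F_{uv})$ then vanishes identically, leaving the factorisation
\begin{equation*}
  K=\kappa^{(u)}\,\kappa^{(v)},\qquad \kappa^{(u)}:=z_{uu}-\alpha z_u,\quad \kappa^{(v)}:=z_{vv}-\gamma z_v,
\end{equation*}
where $\kappa^{(u)},\kappa^{(v)}$ are the isotropic curvatures of the $u$- and $v$-profiles in their vertical planes. This factorisation is the heart of the matter and is exactly where the geodesic hypothesis enters.

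Given this, the family is produced by scaling the two factors reciprocally. I seek heights $z=z_\lambda$ solving the overdetermined system
\begin{align*}
  z_{uu}&=\alpha z_u+\lambda\,\kappa^{(u)}_F, & z_{vv}&=\gamma z_v+\lambda^{-1}\kappa^{(v)}_F, & z_{uv}&=\beta_1 z_u+\beta_2 z_v,
\end{align*}
with $\kappa^{(u)}_F=h_{uu}-\alpha h_u$, $\kappa^{(v)}_F=h_{vv}-\gamma h_v$ and $\lambda=\lambda(t)$, $\lambda(0)=1$. Any solution obeys the planarity relation, hence is a V-net over the fixed top view, and by the factorisation $K(F(t))=(\lambda\kappa^{(u)}_F)(\lambda^{-1}\kappa^{(v)}_F)=K(F)$, so all members are isometric to $F$; for $\lambda\neq1$ the factor $\kappa^{(u)}$ genuinely changes, so the deformation is non-trivial unless $F$ is flat. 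In the discrete statement the same reciprocal scaling is prescribed on the Cauchy data -- the two initial parameter polylines, whose discrete profile curvatures I multiply by $\lambda$ and $\lambda^{-1}$ modulo an isotropic shearing -- and the remaining vertices are generated by the planarity propagation that already defines a V-net from such data.

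The main obstacle is the \emph{consistency} of this construction: that the prescribed second derivatives (smooth model) or the planarity recursion (discrete case) actually close up to a net. I expect to settle it by the same mechanism that guarantees $F$ itself exists. Differentiating the system, the integrability conditions take the form $A\,z_u+B\,z_v+\lambda\,C=0$, with $A=\alpha_v-(\beta_1)_u-\beta_1\beta_2$, $B=\alpha\beta_2-(\beta_2)_u-\beta_2^2$, $C=(\kappa^{(u)}_F)_v-\beta_1\kappa^{(u)}_F$, together with a symmetric relation under $u\leftrightarrow v$. Crucially $A$ and $B$ vanish identically, because they are precisely the integrability conditions of the top view $\tilde F$ (its existence forces $\alpha_v=(\beta_1)_u+\beta_1\beta_2$ and $\alpha\beta_2=(\beta_2)_u+\beta_2^2$); evaluating the remaining relation on $h$ at $\lambda=1$ then forces $C=0$ as well. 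Hence the conditions reduce to $\lambda\cdot0=0$ and hold for \emph{every} $\lambda$, so $z_\lambda$ exists for all $\lambda$ and depends continuously on it with $z_1=h$. For the genuinely discrete claim I will reproduce this cancellation at the level of the planarity recursion -- showing that transporting the $\lambda$-scaled directional curvatures around each quad is compatible because the top view closes up and $F$ exists -- which is the step demanding the most care.
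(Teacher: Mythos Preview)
Your approach is correct but genuinely different from the paper's. You work directly with heights over the fixed top view, exploit the factorisation $K=\kappa^{(u)}\kappa^{(v)}$ forced by the Q-net and geodesic hypotheses, and produce the family by reciprocally scaling the two profile curvatures; the integrability check via $A=B=0$ (from the existence of $\tilde F$) and then $C=0$ (from the case $\lambda=1$) is clean and sound. The paper instead passes to the metric dual $LB=\nu(F)$: since the top-view parameter lines of $F$ are straight, the reciprocal-parallel top view of $LB$ is a \emph{translational} net, and the isometric family is obtained as the area-preserving Combescure transforms of $LB$ that scale one edge family by $t$ and the other by $1/t$. This is your reciprocal scaling seen after duality, where it becomes a one-line observation with no integrability to verify. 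The trade-off: your argument is self-contained and makes the geometric content (the curvature factorisation along geodesic directions) explicit, but the statement in context is about \emph{discrete} V-nets, and you leave that case as the ``step demanding the most care''; the paper's duality route handles the discrete case directly, since for a discrete translational Q-net the $t,1/t$ edge scaling manifestly preserves face areas and closes up without further compatibility conditions.
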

\begin{proof}
  We apply metric duality to $F$ and obtain $L B = \nu(F)$. The top views
  are reciprocal parallel nets.
  The reciprocal parallel net to a planar net whose parameter lines are
  straight lines, must be a translational nets
  (Fig.~\ref{fig:translational-net}) because collinear edges in a
  parameter line of one net correspond to parallel (transversal) edges
  joining neighboring parameter lines in the reciprocal parallel one. 

  To verify the existence of a continuous family of isometries we now have
  to show that the translational net $L B$ can undergo a continuous family
  of area preserving Combescure transformations, generating nets $L B(t)$.
  A Combescure transformation is defined by prescribing the parallel
  images of two parameter lines of different families, and thus we scale
  each edge of one parameter line by a factor $t$ and the edges of the
  other parameter  line by $1/t$. 
\end{proof}

Every finite isometric deformation is at each moment an infinitesimal
isometry which therefore gives rise to a Darboux wreath.

\begin{prop}
  The Darboux wreath defined by an isotropic V-net $F$ consists of the
  following surfaces and relations between them: 
  \begin{enumerate}
    \item\label{itm:1} 
      $F$ and $L V$ are isotropic V-nets with the same top view, related
      by the metric dual the to K{\oe}nigs duality. 
    \item\label{itm:2} 
      The isotropic dihedral angle along each parameter line of 
      $F$ and $L V$ is constant.
    \item\label{itm:3} 
      The nets $L B = \nu(F)$ and $\bar C = \nu(L V)$ are translational
      nets and they are Combescure transforms of each other. Even more,
      they are related by K{\oe}nigs duality and their top views are
      reciprocal diagrams to the top views of $F$ and $L V$.
    \item\label{itm:4} 
      The nets $L C$ and $\bar B$ are reciprocal parallel nets to $F$ and
      $L V$, respectively. They are isotropic K-nets, i.e., discrete
      surfaces of constant isotropic Gaussian curvature. Nets $L C$ and
      $\bar B$ are A-nets and isotropic Chebyshev nets, since they appear
      as translational nets in the top view (same as the top views of
      $\bar C$ and $L B$, respectively).
  \end{enumerate}
\end{prop}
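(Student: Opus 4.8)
The plan is to run through the six nets of the Darboux wreath~II (Fig.~\ref{fig:darboux-ii}) and read off each of the four assertions from the structural results already in hand, reserving the genuinely new work for the constant--curvature statement in~\ref{itm:4}. For~\ref{itm:1}, the net $V$ is a Q-net by Lemma~\ref{lem:rigidfaces}, and since $L$ is affine it preserves planarity of faces, so $LV$ is again a Q-net; by Theorem~\ref{thm:darbouxwreath-ii}~\ref{itm:dwIIiv} the pair $(LV,F)$ has a common top view, and as the top-view parameter polylines of the V-net $F$ are straight, so are those of $LV$, whence $LV$ is a V-net. The description of the relation as the metric dual of a Kœnigs duality is exactly Proposition~\ref{prop:rigidplanarfaces} applied to $(F,V)$ and transported from $V$ to $LV$ by $L$.

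Claim~\ref{itm:3} I would split into three sub-assertions. That the top views of $LB=\nu(F)$ and $\bar C=\nu(LV)$ are reciprocal parallel to the straight-polyline top views of $F$ and $LV$ is precisely the first step of the proof of Proposition~\ref{prop:v-net}, and the same argument shows that a net reciprocal parallel to one with straight parameter polylines is translational; refining this from the top view to the space nets (using that each parameter polyline of $F$ lies in a vertical plane, so its $\nu$-image inherits a translational structure) yields that $LB$ and $\bar C$ are translational. They are Kœnigs dual by Proposition~\ref{prop:rigidplanarfaces}, and since Kœnigs duality forces parallel corresponding edges (Definition~\ref{defn:koenigs}), two such translational nets are in particular Combescure transforms of one another; the area-preserving continuous family of such transforms is the one produced in Proposition~\ref{prop:v-net}.

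Assertion~\ref{itm:2} then drops out of~\ref{itm:3}. Under $\nu$ the planar faces of $F$ along a $u$-parameter line map to the vertices of $LB$ along a lattice line, and by the ``vice versa'' part of Lemma~\ref{lem:metricduality} the isotropic dihedral angle between two consecutive face planes of $F$ equals the isotropic length of the corresponding edge of $LB$. Because $LB$ has a translational top view, all edges in a fixed lattice direction are mutual translates and hence of equal isotropic length; therefore the dihedral angle is constant along each parameter line of $F$, and symmetrically along each parameter line of $LV$.

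For~\ref{itm:4}, that $LC$ and $\bar B$ are A-nets, that $LC$ is reciprocal parallel to $F$, and that $\bar B=\nu(LC)$ is the companion A-net are supplied by Lemma~\ref{lem:reciprocal}; the reciprocal-parallel relation of $\bar B$ to $LV$ follows by the symmetry of the wreath under interchanging $F$ and $V$ (i.e.\ by applying Lemma~\ref{lem:reciprocal} to the reversed pair). By Theorem~\ref{thm:darbouxwreath-ii}~\ref{itm:dwIIiv} the top views of $LC$ and $\bar C$ (resp.\ of $\bar B$ and $LB$) coincide, and these are translational by~\ref{itm:3}; since in a planar translational net opposite edges of every face are equal translates, and isotropic length is top-view Euclidean length, $LC$ and $\bar B$ are isotropic Chebyshev nets. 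The remaining and hardest point is that $LC$ and $\bar B$ have \emph{constant} isotropic Gaussian curvature. Here I would compute the discrete isotropic Gaussian curvature of the A-net $LC$ through the mesh-parallelism / mixed-area framework used earlier, express it via the reciprocal-parallel V-net $F$, and show it is independent of the face; this is the isotropic discrete analogue of the classical fact that reciprocal parallels of conjugate-geodesic (Voss) nets are surfaces of constant Gaussian curvature~\cite{sauer:1970,wunderlich-1951,Schief2008}. Verifying that the \emph{isotropic} curvature (rather than the Euclidean one) comes out genuinely constant is the main obstacle, and is the step in which the straightness of the parameter polylines of $F$ (the geodesic/Voss condition) must be used in an essential way.
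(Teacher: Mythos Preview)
Your proposal is correct and follows essentially the same route as the paper: Lemma~\ref{lem:rigidfaces} plus the shared top view for~\ref{itm:1}, Proposition~\ref{prop:rigidplanarfaces} for the K{\oe}nigs duality, Lemma~\ref{lem:reciprocal} for the reciprocal parallel A-nets, the reciprocal-parallel-to-straight-lines argument from Proposition~\ref{prop:v-net} for the translational top views, and the metric-duality exchange of isotropic angles and lengths (Lemma~\ref{lem:metricduality}) for~\ref{itm:2}. The only structural difference is the order in which~\ref{itm:3} and~\ref{itm:4} are handled: the paper first shows (via Lemma~\ref{lem:reciprocal}) that the top views of $LC,\bar B$ are translational and then transfers this to $LB,\bar C$ using the equality of top views from Theorem~\ref{thm:darbouxwreath-ii}~\ref{itm:dwIIiv}; you run the same transfer in the opposite direction. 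Either order works.

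Your caution about the constant--curvature assertion in~\ref{itm:4} is justified: the paper's proof does not verify the K-net property explicitly either, but simply records it as part of the package obtained from Lemma~\ref{lem:reciprocal} together with the translational (Chebyshev) top view, invoking the classical Voss/K-net correspondence~\cite{sauer:1970,Schief2008,wunderlich-1951} rather than carrying out a separate computation. So the step you single out as ``the main obstacle'' is in fact not discharged by an independent argument in the paper; if you want to be more rigorous than the paper here, the computation you sketch is the natural way to do it.
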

\begin{proof}
  Lemma~\ref{lem:rigidfaces} implies that $L V$ is a Q-net. It has the
  same top view as $F$ (Def~\ref{defn:discrinfintiso}) and is therefore
  also a $V$-net which implies~\ref{itm:1}.

  By Lemma~\ref{lem:reciprocal}, $L C$ and $\bar B$ are reciprocal
  parallel to $F$ and $L V$, respectively. Hence, so are their top views.
  By the proof of Proposition~\ref{prop:v-net} the reciprocal parallel net
  to a planar net whose parameter lines are straight lines, must be a
  translational nets.
  This implies that the A-nets $L C$ and $\bar B$ appear as translational
  nets in the top view. Together with Lemma~\ref{lem:reciprocal} we
  obtain~\ref{itm:4}.

  Since the top views of $\bar C$ and $L B$ agree as well as of $L C$ and
  $\bar B$ we obtain that the Q-nets $L B$ and $\bar C$ are translational
  nets and together with Proposition~\ref{prop:rigidplanarfaces} we
  obtain~\ref{itm:3}.

  The sequences of parallel edges on $L B$ and $\bar C$ have constant
  isotropic length, implying by metric duality the constant isotropic
  dihedral angles for $F$ and $V$, hence~\ref{itm:2}.
\end{proof}

We mention that the discrete isotropic K-nets have completely analogous
properties as their smooth counterparts studied by
Strubecker~\cite{strubecker2}. They also appeared in connection with
smooth extensions of A-nets~\cite{kaef-pot-2012}. If one fills each quad
in an isotropic K-net by a bilinear patch (part of a hyperbolic
paraboloid), the union of these patches is a $C^1$ surface.

\section{Conclusion and Future Research} 
\label{sec:future}

While isotropic geometry has been systematically studied by K.~Strubecker
and inspired a large body of follow-up research, a promising definition of
isometric surfaces in $I^3$ and a study of their properties have been
missing so far. The present paper provides the fundamental definition and
basic results with the goal to fill this gap. We put special emphasis on
infinitesimal isometries, since they exhibit close  relations to concepts
in statics which recently received interest in connection with
computational structural design and architectural geometry
\cite{Millar2021,vouga-2012-sss}. Moreover, important special cases
possess elegant discrete representations which add new perspectives to
concepts in discrete differential geometry~\cite{bobenko-2008-ddg}. 

Our original motivation for studying isotropic isometries has been a
complete classification of isotropic counterparts to flexible quad meshes.
If these meshes are Q-nets, our study provides the basic approach:
Application of metric duality in $I^3$ maps a flexible Q-net in $I^3$ to a
Q-net that possesses a one-parameter family of Combescure transforms
which have the same area in corresponding faces. Such Q-nets have been
classified in the meanwhile~\cite{combescure-2024}, namely for nets with
$m \times n$ faces and not just $3 \times 3$ nets. Based on this result,
we have completed a thorough study of isotropic flexible
 Q-nets \cite{flexible-isotropic-2025}.  

There is still a lot of room for future research. We address a few of
these topics.

\begin{enumerate}
  \item We are currently working on the completion of our project on
    flexible Q-nets. Since we already have a constructive approach
    to isotropic flexible Q-nets \cite{flexible-isotropic-2025}, we
    need to transform them into Euclidean flexible meshes. For
    that, we have already developed the necessary optimization
    framework~\cite{quadmech-2024}. First numerical experiments using
    isotropic flexible nets as initial guesses have been surprisingly
    successful, which leads to the open question for a classification of
    all meshes which are flexible in isotropic and Euclidean geometry.
  \item So far, we are missing a \emph{general discrete theory of
    isometries in Euclidean and isotropic geometry} that would naturally
    extend towards infinitesimal isometries and the diagrams of Sauer
    mentioned above. The discretization of Euclidean isometries by Jiang
    et al.~\cite{isometries-2021} has been very successful in geometric
    computing, but the complete set of associated diagrams is not easily
    accessible. Related to this problem is a study of isotropic flexible
    nets with not necessarily planar faces. 
  \item Further research could address the paratactic map, in particular a
    discrete version to derive the main properties in a purely elementary
    way. Related to that is a discrete theory of general contact element
    nets, extending the special cases of principal contact element
    nets~\cite{bobenko-2008-ddg}.
\end{enumerate}

\subsection*{Acknowledgements}

The authors gratefully acknowledge the support by the Austrian Science
Fund (FWF) through grant I~4868 (DOI 10.55776/I4868).

\bibliographystyle{abbrv}
\bibliography{main}

\end{document}